\documentclass[11pt]{amsart} \textwidth=14.5cm \oddsidemargin=1cm
\usepackage{amsmath,wasysym}
\usepackage{amsxtra}
\usepackage{amscd}
\usepackage{amsthm}
\usepackage{amsfonts}
\usepackage{amssymb}
\usepackage{arydshln}
\usepackage{eucal}
\usepackage{graphics, color}
\usepackage{hyperref,mathrsfs}
\usepackage[usenames,dvipsnames]{xcolor}
\usepackage{tikz}
\usepackage{stmaryrd}
\usepackage[all]{xy}
\usepackage{hyperref}
\usepackage{color}
\usepackage{bbm} % \mathbbm
\allowdisplaybreaks

\hypersetup{colorlinks,linkcolor=blue, urlcolor=blue,
citecolor=blue}

\newtheorem{thm}{Theorem} [section]
\newtheorem{lem}[thm]{Lemma}
\newtheorem{cor}[thm]{Corollary}
\newtheorem{prop}[thm]{Proposition}

\newtheorem{rmk}[thm]{Remark}

\theoremstyle{definition}
\newtheorem{definition}[thm]{Definition}

\DeclareMathOperator{\Hom}{Hom}

\newcommand{\binnk}{\mathscr{F}^n_k}
\newcommand{\C}{\mathbb{C}}
\newcommand{\cell}{\Delta}

\newcommand{\mH}{\mathscr{H}_{n,R}(q)}
\newcommand{\mHK}{\mathscr{H}_{n,\mathbb{K}}(q)}

\newcommand{\Hta}{\mathcal{H}'_{n-k}}

\newcommand{\ks}{\mathfrak{s}}
\newcommand{\kt}{\mathfrak{t}}
\newcommand{\ku}{\mathfrak{u}}
\newcommand{\kv}{\mathfrak{v}}

\newcommand{\Mat}{\operatorname{Mat}}

\newcommand{\Sn}{{\mathfrak{S}}}
\newcommand{\Std}{\mathrm{Std}}

\newcommand{\wtT}{\widetilde T}

\newcommand{\nc}{\newcommand}
\nc{\redtext}[1]{\textcolor{red}{#1}}
\nc{\red}[1]{\redtext{ #1}}
\nc{\browntext}[1]{\textcolor{brown}{#1}}
\nc{\greentext}[1]{\textcolor{green}{#1}}
\nc{\bluetext}[1]{\textcolor{blue}{#1}}
\nc{\brown}[1]{\browntext{ #1}}
\nc{\green}[1]{\greentext{ #1}}
\nc{\blue}[1]{\bluetext{ #1}}

\numberwithin{equation}{section}

\title[Cellular Bases]
{Cellular Bases of Mirabolic Hecke Algebras}

\author[Jian Chen]{Jian Chen}
\address{School of Mathematical Sciences,
	University of Science and Technology of China,
	Hefei 230026, China}
\email{JianChen96@outlook.com (Chen)}

\author[Yan Liu]{Yan Liu}
\address{School of Mathematics and Statistics, Beijing Institute of Technology, Beijing 100081, China}
\email{3120251485@bit.edu.cn(Liu)}

\author[Jinkui Wan]{Jinkui Wan}
\address{School of Mathematical Sciences,
	Shenzhen University, Shenzhen, 518060, China}
\email{wjk302@hotmail.com (Wan)}

\author[Xirui Yu]{Xirui Yu}
\address{School of mathematical Sciences,
    East China Normal University, Shanghai 200241, China}
\email{52280155007@stu.ecnu.edu.cn (Yu)}

\begin{document}

\begin{abstract}
{\color{black}
We prove that the mirabolic Hecke algebra $\mathscr{H}_{n,R}(q)$ over a commutative ring $R$ is cellular by constructing explicit cellular bases. When $R=\mathbb{K}$ is a field, we also establish a Morita equivalence between $\mathscr{H}_{n,\mathbb{K}}(q)$ and a direct sum of Hecke algebras associated with symmetric groups. As applications, we classify the irreducible $\mHK$-modules and obtain a semisimplicity criterion. Finally, when $\mathbb{K}=\mathbb{C}(q)$, we show that the cell modules arising from one of our cellular bases coincide with Siegel's irreducible $\mathscr{H}_{n,\mathbb{K}}(q)$-modules.
}
\end{abstract}

\maketitle
\setcounter{tocdepth}{1}
\tableofcontents
%

%%%%%%%%%%%
%%%%%%%%%%%
%%%%%=================================================================================================

\section{Introduction}
%%%%%==================================================================================================
{\color{black}

The study of the mirabolic Hecke algebra $\mH$ was initiated by Solomon \cite{So72}, and its irreducible representations were subsequently described by Siegel \cite{Si93}. The algebra $\mH$ shares many features with the Hecke algebra $\mathcal{H}_{n,R}(q)$. For example, when $R=\mathbb{C}$ and $q$ is a prime power, Rosso \cite{Ro14} identified $\mH$ with the convolution algebra of $GL_n(q)$-equivariant complex-valued functions on the triple variety
$
GL_n(q)/\mathsf{B} \times GL_n(q)/\mathsf{B} \times \mathbb{F}_q^n,
$
where $\mathsf{B}$ is the Borel subgroup of upper triangular matrices in $GL_n(q)$. This generalizes the classical realization of $\mathcal{H}_n(q)$ as the convolution algebra of $GL_n(q)$-equivariant complex-valued functions on the double flag variety
$
GL_n(q)/\mathsf{B} \times GL_n(q)/\mathsf{B}.
$
Subsequent developments have further clarified the connections between mirabolic Hecke algebras and quantum groups. In particular, Rosso's work on mirabolic quantum groups \cite{Ro18}, together with the subsequent work of Fan, Zhang, and Ma \cite{FZM25}, established mirabolic Schur--Weyl duality, relating the representation theory of mirabolic Hecke algebras to that of their quantum-group counterparts. More recently, the third author \cite{Wan26} established a Schur--Weyl duality between the mirabolic Hecke algebra and the usual quantum group $U_q(\mathfrak{gl}_r)$, providing a new connection between mirabolic representation theory and the classical framework of quantum groups. Her work also gives a new presentation and basis for the mirabolic Hecke algebra, as well as Frobenius character formulas and a recursive Murnaghan--Nakayama rule for its irreducible characters. These developments reveal a rich interplay among the geometric, algebraic, and representation-theoretic aspects of mirabolic Hecke algebras and motivate further study of their structure.

The main goal of this article is to prove that the mirabolic Hecke algebra $\mH$ is cellular, as conjectured in \cite{Ro14}. Rather than proving directly that the elements proposed in \cite[Conjecture 7.5]{Ro14} form a cellular basis when $R=\mathbb{C}(q)$, we construct a new family of cellular bases. More precisely, using the idempotents $P_1,P_2,\ldots,P_n$ from the presentation of $\mH$ in \cite{Wan26}, we first construct a filtration
\[
0 = J_{n+1} \subset J_n \subset J_{n-1} \subset \cdots \subset J_1 \subset J_0 = \mH
\]
by two-sided ideals. By modifying the $R$-basis of $\mH$ introduced in \cite{Wan26}, we obtain a basis for each subquotient $J_k/J_{k+1}$, where $0\leq k\leq n$, and derive an explicit multiplication formula in terms of multiplication in the Hecke algebra $\mathcal{H}'_{n-k,R}(q)$ associated with the symmetric group; see Proposition \ref{prop:layer-mult-basis}. This allows us to prove that a choice of cellular basis for each $\mathcal{H}'_{n-k,R}(q)$, with $0\leq k\leq n$, gives rise to a cellular basis for $\mH$ in the sense of \cite{GL96}.

 When $R=\mathbb{K}$ is a field, we  show that $\mHK$ is isomorphic to the direct sum of the subquotients $J_k/J_{k+1}$ for $0\leq k\leq n$ and  further identify $J_k/J_{k+1}$ with
$
V_k\otimes_R V_k\otimes_R \mathcal{H}'_{n-k,R}(q),
$
where $V_k$ is a free $R$-module of rank $\binom{n}{k}$. This yields an isomorphism between $\mHK$ and a direct sum of matrix algebras over the Hecke algebras $\mathcal{H}'_{n-k,\mathbb{K}}(q)$, that is, 
\[
\mHK\cong\oplus_{k=0}^n\text{Mat}_{\binom{n}{k}}(\mathcal{H}'_{n-k,\mathbb{K}}(q)). 
\]
Consequently, $\mHK$ is Morita equivalent to the direct sum of $\mathcal{H}'_{n-k,\mathbb{K}}(q)$ with $0\leq k\leq n$. 
This equivalence allows us to deduce the representation theory of $\mHK$ from that of the Hecke algebras $\mathcal{H}_{r,\mathbb{K}}(q)$ for $0\leq r\leq n$. In particular, it yields a classification of irreducible modules and a semisimplicity criterion, and determines the decomposition numbers and block decomposition of $\mHK$.

Our calculations parallel those for the $q$-rook monoid algebra in \cite{Pa06} in several respects. In fact, we establish an explicit connection between $\mH$ and the $q$-rook monoid algebras, and our construction also yields cellular bases for the latter; see Remark \ref{rmk:rook-correction}.

Using the cellular basis of $\mH$ arising from the Murphy bases of the Hecke algebras, we construct the corresponding cell modules and compute their canonical bilinear forms. We then apply the general theory of cellular algebras \cite{GL96} to recover the classification of irreducible $\mHK$-modules obtained via the Morita equivalence above. When $\mathbb{K}=\mathbb{C}(q)$, we further show that these cell modules coincide with the irreducible modules constructed in \cite{Si93,Ro14}.

Our construction also raises several questions. One is how our cellular bases relate to the elements proposed in \cite[Conjecture 7.5]{Ro14}. Another is whether a cellular basis of $\mH$ arising from Kazhdan--Lusztig-type canonical bases of the Hecke algebras $\mathcal{H}'_{n-k,R}(q)$ yields a Kazhdan--Lusztig-type canonical basis of $\mH$, and how such a basis relates to the basis of the mirabolic bimodule constructed by Travkin \cite{Tr09}. We will address these questions in a forthcoming paper \cite{LW26}.

The paper is organized as follows. In Section \ref{sec: basics}, we review the basic properties of the mirabolic Hecke algebra $\mH$ and introduce a new basis used throughout the subsequent sections. In Section \ref{sec:cellular}, we establish key multiplication formulas, construct cellular bases for $\mH$, and prove the isomorphism with a direct sum of matrix algebras over Hecke algebras associated with symmetric groups. In Section \ref{sec:cell}, we describe the cell modules and show that, when $\mathbb{K}=\mathbb{C}(q)$, they coincide with Siegel's irreducible $\mHK$-modules.

Throughout the article, for each non-negative integer $n\geq 0$, and we write
$
[n]=\{1,2,\dots,n\}$ and $
[0]=\varnothing.
$
For $0\leq k\leq n$, set
\[
\mathscr{F}^n_k=\{A\subseteq [n]\mid |A|=k\},
\]
where $|A|$ denotes the cardinality of $A$. Thus, $\mathscr{F}^n_k$ is the set of all $k$-element subsets of $[n]$, and
$
|\mathscr{F}^n_k|=\binom{n}{k}.
$
Let $\Sn_n$ denote the symmetric group on $[n]$, with simple reflections $s_i=(i,i+1)$ for $1\leq i\leq n-1$, and let $\ell$ denote its length function.

%We denote $\Zq=\Z[q,q^{-1}]$.
%%%%%%%%%%%
\subsection*{Acknowledgement}
The first author is supported by NSFC-12671043. The second and third authors are supported by NSFC-12671038. 
}
%%%%%%%%%
%%%%%%%%%
%%%%%==================================================================================================
{\color{black}
\section{Mirabolic Hecke algebra $\mH$ and its bases}\label{sec: basics}
%%%%%================================
%%%%%==================================================================================================
In this section, we first recall the definition of the mirabolic Hecke algebra in \cite{So72} as well as the new presentation in \cite{Wan26}. Then we derive a new basis from the one given in \cite{Wan26}.
%%%%%%%
\subsection{Mirabolic Hecke algebra}

Suppose $R$ is a commutative ring. Let $q\in R$ be an invertible element.
The following definition of the mirabolic Hecke algebra is due to \cite{So72}.
%%%%%%%%
\begin{definition}\cite{So72}\label{def:mirabolic}
 The mirabolic Hecke algebra $\mH$ is defined to be the $R$-algebra generated by $T_0,T_1,\dots,T_{n-1}$, subject to the following relations:
 $$
\begin{aligned}
T_0^2&=(q-2)T_0+(q-1); \quad T_i^2=(q-1)T_i+q, \quad 1\le i< n;\\
T_iT_j&=T_jT_i, \quad  0\le i,j<n,~|i-j|>1;\quad T_iT_{i+1}T_i=T_{i+1}T_iT_{i+1}, \quad 1\le i\le n-2;\\
T_0T_1T_0T_1&=(q-1)(T_1T_0T_1+T_1T_0)-T_0T_1T_0;\\
 T_1T_0T_1T_0&=(q-1)(T_1T_0T_1+T_0T_1)-T_0T_1T_0.
\end{aligned}
$$
\end{definition}

It follows from \cite{So72} that the subalgebra of $\mH$ generated by $T_1,\ldots,T_{n-1}$ is isomorphic to the finite Hecke algebra $\mathcal{H}_{n,R}(q)$ associated to symmetric group $\mathfrak{S}_n$. 
 In order to establish a duality between the quantum groups $U_q(\mathfrak{gl}_r)$ and $\mH$ over $R=\mathbb{C}(q)$ which generalizes the Jimbo-Schur-Weyl duality for $\mathcal{H}_{n,\mathbb{C}(q)}(q)$,   a new representation of $\mH$ is provided in \cite[Theorem~2.8]{Wan26}. 
 First we write 
 \begin{equation}\label{eq:Pi-recursive-1}
 P_1=1-q^{-1}(T_0+1), \quad P_i=-P_{i-1}T_{i-1}^{-1}P_{i-1}\quad (2\leq i\leq n). 
 \end{equation}
\begin{prop}\cite[Theorem 2.8]{Wan26}\label{prop:mirabolic}
The mirabolic Hecke algebra $\mH$ is generated by $T_1,\dots,T_{n-1}, P_1,\dots,P_n$ over $R$,
subject to the following relations:
\begin{align}
T_i^2&=(q-1)T_i+q,\qquad (1\le i\le n-1)\label{eq:quad}\\
T_iT_{i+1}T_i&=T_{i+1}T_iT_{i+1},\qquad T_iT_j=T_jT_i\quad (|i-j|>1),\label{eq:braid}\\
P_i^2&=P_i,\qquad (1\le i\le n),\label{eq:Pidem}\\
P_iP_j&=P_jP_i=P_i\qquad (1\le j<i\le n),\label{eq:Pnest}\\
P_iT_j&=T_jP_i\qquad (1\le i<j\le n-1),\label{eq:Pcomm}\\
P_iT_j&=T_jP_i=-P_i\qquad (1\le j<i\le n),\label{eq:Pminus}\\
P_i&=-P_{i-1}T_{i-1}^{-1}P_{i-1}\qquad (2\le i\le n).\label{eq:Prec}
\end{align}
\end{prop}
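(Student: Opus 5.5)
The plan is to compare two presentations. Let $\mathcal{A}$ be the abstract $R$-algebra with generators $T_1,\dots,T_{n-1},P_1,\dots,P_n$ and relations \eqref{eq:quad}--\eqref{eq:Prec}. I will build mutually inverse homomorphisms $\phi:\mH\to\mathcal{A}$ and $\psi:\mathcal{A}\to\mH$. For $\psi$ the natural choice is $T_i\mapsto T_i$, with $P_i$ sent to the elements defined by \eqref{eq:Pi-recursive-1}. For $\phi$ the natural choice is $T_i\mapsto T_i$ ($i\ge1$) and $T_0\mapsto q(1-P_1)-1$, which is just \eqref{eq:Pi-recursive-1} solved for $T_0$. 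Both composites are then the identity on generators, because $P_1$ and $T_0$ determine each other and $P_i$ for $i\ge2$ is forced by \eqref{eq:Prec}. So the whole proof is to check that $\phi$ and $\psi$ are well defined.

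\textbf{Step 1: $\phi$ is well defined.} I must check Solomon's relations in $\mathcal{A}$.
\begin{itemize}
\item The relation $T_0^2=(q-2)T_0+(q-1)$ becomes, after substituting $T_0=q(1-P_1)-1$, exactly $P_1^2=P_1$.
\item Commutation of $T_0$ with $T_j$ for $j\ge2$ is \eqref{eq:Pcomm} with $i=1$.
\item The two quartic relations remain. Expand $T_0$ in $1$ and $P_1$. Each side becomes a combination of words in $P_1$ and $T_1$ of length at most four.
\item Next, simplify these words using \eqref{eq:Prec}, $P_1T_1^{-1}P_1=-P_2$, together with $T_1^{-1}=q^{-1}T_1-(1-q^{-1})$.
\item Then use \eqref{eq:Pminus} and \eqref{eq:Pnest}: $P_2T_1=T_1P_2=-P_2$ and $P_2P_1=P_1P_2=P_2$. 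These let me rewrite $P_1T_1P_1$ as $-qP_2+(q-1)P_1$.
\item With these rules every word reduces to a combination of $1,T_1,P_1,T_1P_1,P_1T_1,T_1P_1T_1,P_2$, and I compare coefficients.
\end{itemize}

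\textbf{Step 2: $\psi$ is well defined.} I must verify \eqref{eq:Pidem}--\eqref{eq:Pminus} in $\mH$ for the recursively defined $P_i$; relation \eqref{eq:Prec} holds by definition. I argue by induction on $i$.
\begin{itemize}
\item For $i=1$, $P_1^2=P_1$ is the quadratic relation of $T_0$. Relation \eqref{eq:Pcomm} holds because $T_0$ commutes with $T_j$ for $j\ge2$.
\item For $i=2$, I need $P_2T_1=-P_2$, $P_2^2=P_2$, and $P_1P_2=P_2P_1=P_2$. These follow from the quartic relations by the same rank-two computation as in Step 1, run in reverse.
\item Inductive step. Assume the relations hold for $P_{i-1}$. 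Then $P_iT_j=T_jP_i$ for $j>i$ is immediate, since $T_j$ commutes with $P_{i-1}$ and with $T_{i-1}$.
\item The nesting $P_iP_{i-1}=P_i$ follows from $P_{i-1}^2=P_{i-1}$. For $j<i-1$, the relations $P_iP_j=P_i$ and $P_iT_j=-P_i$ follow from the corresponding relations for $P_{i-1}$.
\item The genuinely new relations are $P_i^2=P_i$ and $P_iT_{i-1}=-P_i$.
\end{itemize}

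For the new relations I reduce to the rank-two case. I expect to use the identity $P_{i-1}=-P_{i-2}T_{i-2}^{-1}P_{i-2}$ together with the braid relations. The aim is to show that, under the substitution $P_1\mapsto P_{i-2}$, $T_0\mapsto$ the corresponding expression, and $T_1\mapsto T_{i-1}$, the elements $P_{i-2}$, $T_{i-2}$, $T_{i-1}$ satisfy the same local identities as $P_1,T_1,T_2$. Then the computation for $i=2$ (and $i=3$) transports. The case $i=3$, namely the identity $P_3T_2=-P_3$, needs an explicit check using the braid relation $T_1T_2T_1=T_2T_1T_2$ and the properties of $P_2$.

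\textbf{Main obstacle.} I expect the hardest part to be the inductive proof of $P_iT_{i-1}=-P_i$ and $P_i^2=P_i$ in $\mH$. The plan there is to expand $P_{i-1}T_{i-1}^{-1}P_{i-1}$ and use $P_{i-1}T_{i-2}=-P_{i-1}$ to absorb the factors $T_{i-2}$. Alternatively, one could verify the relations in a faithful representation, such as Solomon's permutation module on $\mathbb{F}_q^n$ with $\mathsf{B}$-orbits, over a generic base, and then specialize.
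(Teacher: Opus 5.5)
The paper gives no proof of this proposition; it is quoted from \cite[Theorem 2.8]{Wan26}. So your argument has to stand on its own, and it does. The two-homomorphism strategy is correct. Both composites are the identity on generators for the reasons you give, and what remains is exactly the relation-checking you list. Two points would make your sketch airtight.

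\textbf{The rank-two computation and ``run in reverse''.} Reduce both sides of Solomon's first quartic relation using only $P_1^2=P_1$, $T_1^2=(q-1)T_1+q$ and $P_1T_1P_1=(q-1)P_1-qP_2$, without invoking $P_2T_1=-P_2$. The difference of the two sides comes out as exactly $-q^3P_2(T_1+1)$. For the second quartic relation the difference is $-q^3(T_1+1)P_2$. This single computation proves Step~1, and it also makes your ``run in reverse'' for $i=2$ precise: it yields both $P_2T_1=-P_2$ and $T_1P_2=-P_2$.

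\textbf{The inductive step.} The step you flag as the main obstacle is a short computation, and it is uniform in $i\ge 3$. No separate $i=3$ check is needed. You also should not try to transport the $i=2$ computation with a ``$T_0$-analogue'': that case genuinely rests on Solomon's quartic relations, which have no higher-level counterpart.

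Put $X=P_{i-1}T_{i-1}^{-1}P_{i-1}=-P_i$. Use the induction hypotheses $P_{i-2}T_{i-1}=T_{i-1}P_{i-2}$, $P_{i-1}P_{i-2}=P_{i-1}$ and $P_{i-1}T_{i-2}=-P_{i-1}$. Use also the braid consequence $T_{i-1}^{-1}T_{i-2}^{-1}T_{i-1}=T_{i-2}T_{i-1}^{-1}T_{i-2}^{-1}$. Then
\begin{align*}
X&=-P_{i-1}T_{i-1}^{-1}P_{i-2}T_{i-2}^{-1}P_{i-2}=-P_{i-1}T_{i-1}^{-1}T_{i-2}^{-1}P_{i-2},\\
XT_{i-1}&=-P_{i-1}\bigl(T_{i-1}^{-1}T_{i-2}^{-1}T_{i-1}\bigr)P_{i-2}=-P_{i-1}T_{i-2}T_{i-1}^{-1}T_{i-2}^{-1}P_{i-2}=P_{i-1}T_{i-1}^{-1}T_{i-2}^{-1}P_{i-2}=-X.
\end{align*}
Hence $P_iT_{i-1}=-P_i$. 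It follows that $P_iT_{i-1}^{-1}=-P_i$, and then
\[
P_i^2=-P_iT_{i-1}^{-1}P_{i-1}=P_iP_{i-1}=P_i .
\]
At $i=3$ this is exactly the explicit check you anticipated.

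\textbf{Left-handed relations.} You also need $T_jP_i=-P_i$, which you do not list. It follows from the mirror computation. Alternatively, Solomon's relations in Definition~\ref{def:mirabolic} are invariant under reversing words, with the two quartic relations swapped. So reversal is an anti-involution of $\mH$ fixing $T_0,\dots,T_{n-1}$, hence fixing every recursively defined $P_i$. Use this reversal rather than the paper's $\sigma$, whose definition presupposes the presentation being proved.

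With these points, the fallback through a faithful permutation representation is unnecessary. It would also be awkward to make rigorous over an arbitrary $R$.
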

Using the convention, we set $P_0=1$. Obviously, by \eqref{eq:quad} and \eqref{eq:Pi-recursive-1} we have
\begin{equation}\label{eq:add-mirabolic-recursion}
    P_iT_iP_i=(q-1)P_i-qP_{i+1}.
\end{equation}

For $0\le d\le n$, denote by $\Sn_d'$ the Young subgroup of $\Sn_n$ corresponding to the composition $\lambda=(1^{n-d},d)=(1,\ldots,1,d)$. It is obvious that $\Sn_d'\cong \Sn_d$.

%%%%%%%%%%%%%%
\subsection{The bases of mirabolic Hecke algebra}
For each 
$A=\{a_1<a_2<\cdots<a_k\}\in \mathscr{F}^n_k$, set 
\begin{equation}\label{eq:wA-reduced}
w_A=(s_{a_1-1}\cdots s_1)(s_{a_2-1}\cdots s_2)\cdots (s_{a_k-1}\cdots s_k).
\end{equation}
Clearly, the right hand side of \eqref{eq:wA-reduced} is a reduced expression and hence $\ell(w_A)=\sum_{r=1}^k (a_r-r)$. 
Actually, if writing $[n]\setminus A=\{c_1<c_2<\cdots<c_{n-k}\}$, then the permutation $w_A$ is actually the $(k,n-k)$-shuffle corresponding to $A$, that is it satisfies 
\begin{equation*}
w_A=(a_1,a_2,\dots,a_k,c_1,c_2,\dots,c_{n-k})\in \Sn_n \quad (\text{in one-line notation}). 
\end{equation*}
 Equivalently,  $w_A$ is the unique shortest permutation such that 
$ w(\{1,\dots,k\})=A. $

Setting $T_{j,r}=T_{j-1}T_{j-2}\cdots T_r$ for $1\le r<j\le n$ and $T_{r,r}=1$, then for each subset $A=\{a_1<\cdots<a_k\}\subseteq [n]$, define
$$T_A=T_{a_1,1}T_{a_2,2}\cdots T_{a_k,k}.$$
Note that $T_{[n]}=1$. By~\eqref{eq:wA-reduced}, we have $$T_A=T_{w_A}.$$
For each $0 \leq k \leq n$, set
$$\Gamma_k = \{(A,B,w) \mid A,B \in \mathscr{F}^n_k, w \in \mathfrak{S}_{n-k}'\}$$
and let $\Gamma = \cup_{k=0}^n \Gamma_k$. For each $(A,B,w) \in \Gamma_k$, define
$$T_{(A,B,w)} := T_A P_k T_w T_B^{-1}. $$

\begin{lem}\cite[Proposition 2.6]{Wan26}\label{lem:basis-Wan}
    The algebra $\mH$ is a free $R$-module and the set $\{ T_{(A,B,w)} \mid (A,B,w) \in \Gamma \}$ forms a $R$-basis for $\mH$.
\end{lem}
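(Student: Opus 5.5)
The plan is to prove spanning and linear independence separately, using the presentation in Proposition~\ref{prop:mirabolic}.

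\textbf{Counting.} The number of proposed basis elements is
\[
|\Gamma|=\sum_{k=0}^n \binom{n}{k}^2 (n-k)! .
\]
This is the known rank of $\mH$ from Solomon's realization \cite{So72}: it counts the $\mathsf{B}$-orbits (equivalently, the $GL_n$-orbits on the triple variety in \cite{Ro14}), which are indexed by partial permutations decorated suitably. So the count matches.

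\textbf{Spanning.} Let $M$ be the $R$-span of the $T_{(A,B,w)}$. Since $1=T_{(\,[n]^c\ldots)}$, namely the $k=0$ element $P_0T_eT_\varnothing^{-1}$, lies in $M$, it suffices to show that $M$ is stable under left multiplication by the generators $T_i$ and $P_j$.

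\emph{Action of $T_i$ on $T_AP_k$.} By the standard coset combinatorics of the $(k,n-k)$-shuffles $w_A$, we have $s_iw_A=w_{s_iA}$ or $s_iw_A=w_A s_j$ for some $j$. The first case is length-increasing or length-decreasing, and the quadratic relation \eqref{eq:quad} handles both. In the second case, either:
\begin{itemize}
\item $j<k$, and \eqref{eq:Pminus} gives $T_jP_k=-P_k$; or
\item $j>k$, and \eqref{eq:Pcomm} lets $T_j$ pass through $P_k$ to be absorbed into $T_w$, with $w\in\Sn'_{n-k}$.
\end{itemize}

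\emph{Action of $P_j$ on $T_AP_k$.} Here $P_j$ must first be moved past $T_A$. I would establish, by induction on $\ell(w_A)$, a commutation formula of the form
\[
P_jT_A\in\sum_{k'\ge \max(j,k)}\ \sum_{A'} R\,T_{A'}P_{k'}\,\mathcal H_{n,R}(q).
\]
The inputs are \eqref{eq:Pcomm}, \eqref{eq:Pminus}, \eqref{eq:add-mirabolic-recursion} (which converts $P_iT_iP_i$ into a combination of $P_i$ and $P_{i+1}$), and the nesting relation \eqref{eq:Pnest}.

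\emph{Reduction to normal form.} It then remains to rewrite $P_{k'}h$ with $h\in\mathcal H_{n,R}(q)$ as a combination of $P_{k'}T_wT_B^{-1}$. Write $h$ in the basis $T_{w}T_{B}^{-1}$ adapted to the double cosets $\Sn_{k'}\times\Sn'_{n-k'}\backslash\Sn_n$, using that $T_B^{-1}$ is a coset representative factor. The part in $\Sn_{k'}$ is absorbed into $P_{k'}$ as a sign by \eqref{eq:Pminus}. After this rewriting, $P_{k'}$ sits adjacent to another $P$, which collapses by \eqref{eq:Pnest}, or to a $T$, which is commuted through as above.

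\textbf{Main obstacle.} I expect the $P_j$ commutation formula to be the hardest step, since it requires careful bookkeeping of which indices in $A$ lie below $j$.

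\textbf{Linear independence.} Spanning by $|\Gamma|$ elements gives freeness once the rank of $\mH$ is known to be at least $|\Gamma|$. I would obtain this in one of two ways:
\begin{itemize}
\item over $\mathbb Z[q^{\pm1}]$ by specializing to Solomon's convolution algebra, where the $T_A P_k T_wT_B^{-1}$ are triangular with respect to orbit indicator functions; or
\item by constructing a faithful representation, such as the mirabolic Schur--Weyl module of \cite{Wan26}, on which these elements act by visibly independent operators.
\end{itemize}
Base change then yields the result over an arbitrary commutative ring $R$.
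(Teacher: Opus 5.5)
The paper gives no proof of this lemma; it quotes it from \cite[Proposition 2.6]{Wan26}. The closest argument in the paper is the proof of Proposition~\ref{pro:basis}. That proof uses your spanning-plus-counting strategy: it shows the span is closed under right multiplication by $T_i^{-1}$ and $P_1$, then compares cardinalities. There, however, linear independence is inherited from the present lemma. Your plan is the left-handed version of that argument. Your choice to get independence from an external rank computation is the right one, since spanning alone cannot give it. Specializing to prime powers $q_0$ and base-changing from $\mathbb{Z}[\mathbf{q}^{\pm1}]$ is the standard way to do this. For that step the dimension count alone suffices: the abstract algebra surjects onto Solomon's convolution algebra of dimension $|\Gamma|$, so a spanning set of size $|\Gamma|$ is a basis there. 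The triangularity against orbit functions you assert is neither needed nor justified. The Schur--Weyl alternative is risky unless you check that Wan's duality does not itself rely on this basis.

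The outline is sound, but the step you leave open needs two adjustments.

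\textbf{State the formula for $P_jT_AP_k$.} As written it concerns $P_jT_A$, which fails: for $A=[2]$, $P_1T_A=P_1\notin J_2$. It should read $P_jT_AP_k$.

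\textbf{Reduce to $P_1$.} You need not treat every $P_j$: by \eqref{eq:Prec}, $T_1,\dots,T_{n-1},P_1$ generate $\mH$. With general $P_j$, the factors $T_{a_i,i}$ for $i\le j$ interact with $P_j$ through both \eqref{eq:Pminus} and $T_j$, which is the bookkeeping you feared. For $P_1$ the computation is short:
\begin{itemize}
\item If $1\in A$, then $P_1T_AP_k=T_AP_k$.
\item If $a_1>1$, then using $P_k=P_1P_k$ and \eqref{eq:Pcomm} you get $P_1T_AP_k=T_{a_1,2}\,(P_1T_1P_1)\,T_{a_2,2}\cdots T_{a_k,k}P_k$. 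Iterating \eqref{eq:add-mirabolic-recursion} along $P_2,\dots,P_k$ then gives
\[
P_1T_AP_k-(-q)^kT_{\{1\}\cup A}P_{k+1}\in\mathcal{H}_{n,R}(q)P_k .
\]
This is the analogue of \eqref{eq:wtT-P1} with $T_A$ in place of $T_B^{-1}$. The extra $\mathcal{H}_{n,R}(q)P_k$ part comes from the $(q-1)P_m$ terms.
\end{itemize}

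\textbf{Finishing.} Your normal-form step then completes the argument. It uses right cosets $(\Sn_{k'}\times\Sn'_{n-k'})\backslash\Sn_n$, not double cosets, via $T_{w_Bx^{-1}}^{-1}=T_{x^{-1}}^{-1}T_B^{-1}$ for $x$ in the Young subgroup. Also, the identity element is $1=T_{(\varnothing,\varnothing,\mathrm{id})}$.
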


In the following we shall introduce another basis of $\mH$ which is useful for showing the cellularity of $\mH$ in subsequent sections. 
It is straightforward to check that the following map
\begin{align*}
  \sigma: \mH &\to \mH\\
T_i &\mapsto T_i,\quad 
P_j \mapsto P_j ,\quad (1\leq i\leq n-1, 1\leq j\leq n)
\end{align*}
is an anti-involution on $\mH$.  Set
\begin{equation}\label{eq:tilde-basis}
    \wtT_{(A,B,w)} := \sigma(T^{-1}_A) P_k T_w T_B^{-1}.
\end{equation}
for $(A,B,w) \in \Gamma_k$ and $k =0,1,\cdots,n.$ 
Then clearly
\begin{equation}\label{eq:imath-T}
\sigma(\wtT_{(A,B,w)} )=\sigma(T^{-1}_B) P_k T_{w^{-1}} T_A^{-1}=\wtT_{(B,A,w^{-1})}
\end{equation}
\begin{prop}\label{pro:basis}
For $0\leqslant k\leqslant n$, the set $\{\widetilde{T}_{(A,B,w)} | (A,B,w) \in \Gamma\}$ is a $R$-basis of $\mH$.
\end{prop}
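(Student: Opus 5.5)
We compare the new elements $\wtT_{(A,B,w)}$ with the basis $\{T_{(A,B,w)}\}$ of Lemma \ref{lem:basis-Wan} and show that the transition matrix is unitriangular with respect to a suitable order. The only difference lies in the left factor: $T_A=T_{w_A}$ is replaced by $\sigma(T_A^{-1})$. Since $\sigma$ fixes each $T_i$ and reverses products, $\sigma(T_{w})=T_{w^{-1}}$, and hence $\sigma(T_A^{-1})=\sigma(T_{w_A})^{-1}=T_{w_A^{-1}}^{-1}$.

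The first step is to expand this element in the standard basis of the Hecke subalgebra $\mathcal{H}_{n,R}(q)$. Using $T_i^{-1}=q^{-1}T_i+(q^{-1}-1)$, we get
\[
T_{w_A^{-1}}^{-1}=q^{-\ell(w_A)}T_{w_A}+\sum_{v<w_A} c_v T_v ,
\]
with $c_v\in R$ and $v<w_A$ in Bruhat order. Each $v$ factors uniquely as $v=w_{A'}u$, where $w_{A'}$ is the minimal length coset representative of $v\,\Sn_{(k,n-k)}$ and $u\in \Sn_{(k,n-k)}=\Sn_k\times\Sn_{n-k}$. Then $T_v=T_{A'}T_u$ with $\ell(v)=\ell(w_{A'})+\ell(u)$.

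The second step is to absorb $T_u$ into $P_k$. Write $u=u_1u_2$ with $u_1\in\Sn_k$ and $u_2\in\Sn_{n-k}$. By \eqref{eq:Pminus}, $T_jP_k=-P_k$ for $j<k$, so $T_{u_1}P_k=(-1)^{\ell(u_1)}P_k$. By \eqref{eq:Pcomm}, $T_{u_2}$ commutes with $P_k$. This gives
\[
T_vP_kT_wT_B^{-1}=\pm T_{A'}P_kT_{u_2}T_wT_B^{-1}.
\]
Here $u_2$ should be understood as the shifted element of $\Sn'_{n-k}$. There is a subtlety: $\Sn_{(k,n-k)}$ acts on positions $k+1,\dots,n$, while $\Sn'_{n-k}$ in the paper also sits on the last $n-k$ letters, so these match. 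Next expand $T_{u_2}T_w$ in the basis $\{T_{w'}\mid w'\in\Sn'_{n-k}\}$ of the parabolic Hecke subalgebra. The result is
\[
\wtT_{(A,B,w)}=q^{-\ell(w_A)}T_{(A,B,w)}+\sum_{A'} \sum_{w'} r_{A',w'}\,T_{(A',B,w')}.
\]
In this sum $A'\in\binnk$ satisfies either $w_{A'}<w_A$ strictly, or $A'=A$. In the latter case $v=w_Au$ with $u\neq 1$, which is impossible since $\ell(v)<\ell(w_A)$; even allowing it, one would still get $\ell(w_{A'})<\ell(w_A)$. So every correction term has $\ell(w_{A'})<\ell(w_A)$, while $B$ and $k$ are unchanged.

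The third step is to conclude. Order $\Gamma$ by $k$, then $B$, then $\ell(w_A)$. Within each block of fixed $(k,B)$, the transition matrix from $\{T_{(A,B,w)}\}$ to $\{\wtT_{(A,B,w)}\}$ is block triangular. Its diagonal blocks, indexed by fixed $A$ with $w$ varying, are $q^{-\ell(w_A)}$ times the identity, and these are invertible because $q$ is invertible. Therefore the transition matrix is invertible over $R$, and the proposition follows from Lemma \ref{lem:basis-Wan}.

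The main obstacle is the bookkeeping in the second step. One has to check that the parabolic factorization of each lower term $v$ really yields $A'$ with $\ell(w_{A'})<\ell(w_A)$. One must also confirm that the relations \eqref{eq:Pminus} and \eqref{eq:Pcomm} suffice to move all of $T_u$ past $P_k$; this is where the index ranges ($j<k$ versus $j>k$) must be checked, including the boundary generator $T_k$. If $T_k$ could appear in $u$, the argument would fail, but it cannot, since $u\in\Sn_k\times\Sn_{n-k}$.
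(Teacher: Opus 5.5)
Your proof is correct, and it takes a genuinely different route from the paper.

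\textbf{The paper's route.} The paper never compares $\wtT_{(A,B,w)}$ with $T_{(A,B,w)}$ directly. It shows that the $R$-span of the $\wtT_{(A,B,w)}$ is stable under right multiplication by each $T_i^{-1}$ and by $P_1$. This uses a four-case analysis of how $i,i+1$ sit relative to $B$, plus the computation $P_kT_B^{-1}P_1=(-1)^kP_{k+1}T_{B'}^{-1}$ when $1\notin B$. It then transfers stability to left multiplication via $\sigma$, concludes that the span is all of $\mH$, and finishes with $|\mathcal B|=|\Gamma|$ and Lemma \ref{lem:basis-Wan}.

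\textbf{Your route.} You expand $\sigma(T_A^{-1})=T_{w_A^{-1}}^{-1}=q^{-\ell(w_A)}T_{w_A}+\sum c_vT_v$ with $\ell(v)<\ell(w_A)$. You factor each $v=w_{A'}u$ through $\Sn_k\times\Sn_{n-k}$ and absorb $T_u$ into $P_kT_w$ using \eqref{eq:Pminus} and \eqref{eq:Pcomm}. Every step checks out. The only fact you need about the lower terms is $\ell(w_{A'})\le\ell(v)<\ell(w_A)$, so the Bruhat-order remark can be dropped. Also, $T_k$ indeed never occurs, since a reduced word of $u$ uses only generators of the parabolic subgroup.

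\textbf{What each route buys.} Your argument is shorter. It produces an explicit transition matrix that is block diagonal in $(k,B)$ and triangular in $\ell(w_A)$, with the units $q^{-\ell(w_A)}$ on the diagonal. So invertibility is immediate and no rank/spanning argument is needed. It also shows that for fixed $k$ and $B$ the spans of $\{\wtT_{(A,B,w)}\}$ and $\{T_{(A,B,w)}\}$ coincide.

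The paper's longer computation buys explicit right-multiplication rules for $\wtT_{(A,B,w)}T_i^{-1}$ and $\wtT_{(A,B,w)}P_1$, which it reuses later. They are what show that the span of $\bigcup_{k\ge i}\Gamma_k$ is a two-sided ideal in Theorem \ref{thm:Ji-basis}. They are also what drive the four cases in the proof of Proposition \ref{prop:XBCq}. With your approach, those rules would still have to be derived separately for the later sections. Alternatively, Theorem \ref{thm:Ji-basis} could be deduced from the corresponding ideal property of Wan's basis, if that is available.
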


\begin{proof} 
 The method we will apply to prove the proposition is parallel to the one used in the proof of \cite[Lemma 2.5]{Wan26} or \cite[Theorem 2.1]{Ha04}. 
 Set $\mathcal{B}=\{\widetilde{T}_{(A,B,w)}\mid (A,B,w)\in\Gamma\}$ and let $\mathscr{H}'_{n,R}(q)$ be the $R$-submodule of $\mH$ spanned by $\mathcal{B}$.  Clearly by \eqref{eq:imath-T} we have 
 \begin{equation}\label{eq:imath-H}
 \sigma\big(\mathscr{H}'_{n,R}(q) \big)=\mathscr{H}'_{n,R}(q). 
 \end{equation}
We claim that $\mathscr{H}'_{n,R}(q)=\mH$. Obviously it suffices to show $\mathscr{H}_{n,R}(q) \subseteq\mathscr{H}'_{n,R}(q)$.
Since $\mathscr{H}_{n,R}(q)$ is generated by $P_1, T_1, \dots, T_{n-1}$, each element of $\mathscr{H}_{n,R}(q)$ can be expressed as a linear combination of monomials in these generators.  
Note $T_1, \dots, T_{n-1}, P_1 \in \mathscr{H}'_{n,R}(q)$, because $T_i = T_{(\varnothing,\varnothing,s_i)}$ for $1 \le i \le n-1$, and $P_1 = T_{(\{1\},\{1\},\text{id})}$.  
Then by induction on the number of factors in monomials in $P_1, T_1, \dots, T_{n-1}$, it suffices to show that
\[
\mathscr{H}'_{n,R}(q) T^{-1}_i \subseteq \mathscr{H}'_{n,R}(q) \quad (1 \le i \le n-1),\qquad 
\mathscr{H}'_{n,R}(q) P_1 \subseteq \mathscr{H}'_{n,R}(q),
\]
in order to prove $\mathscr{H}_{n,R}(q) \subseteq\mathscr{H}'_{n,R}(q)$. 

Using a strategy parallel to the proof of \cite[Theorem 2.1]{Ha04}, one can verify $\mathscr{H}'_{n,R}(q) T_i \subseteq \mathscr{H}'_{n,R}(q)$ for $1 \le i \le n-1$.  
We shall include the details in the following for completeness though the analogous calculations  are omitted in the proof of \cite[Lemma 2.5]{Wan26}. 

Take $(A,B,w)\in\Gamma$ and $1\leq i\leq n-1$. We first take care of the multiplication $\wtT_{(A,B,w)}= \sigma(T^{-1}_A) P_k T_w T_B^{-1}T_i^{-1}$ by $T_i^{-1}$. 
There are four possibilities depending on whether $i$ or $i+1$ belongs to $B$.

    If $i,i+1\in B$, one can write $B=\{b_1<\cdots<b_j<b_{j+1}<\cdots<b_k\}$ with $b_j=i, b_{j+1}=i+1$, \textcolor{black}{$j\leq i$ and $j< k$} .  Accordingly  we can write $T_B^{-1}=X\,T_{i+1,j+1}^{-1}T_{i,j}^{-1}\,Y$, where $X\in \mathcal{H}_{n,R}(q)$ commutes with $T_j^{-1}$ and $Y\in \mathcal{H}_{n,R}(q)$ commutes with $T_i^{-1}$. Consecutively applying \eqref{eq:braid} and \eqref{eq:Pidem} leads to 
    \[
    (T_{i+1,j+1}^{-1}T_{i,j}^{-1})T_i^{-1}=T_j^{-1}(T_{i+1,j+1}^{-1}T_{i,j}^{-1}), 
    \]
    which leads to 
$$
    \begin{aligned}
   \wtT_{(A,B,w)}T_i^{-1}&=  \sigma(T^{-1}_A) P_k T_w T_B^{-1}T_i^{-1}\\
   &=\sigma(T^{-1}_A) T_wP_k  X\,T_j^{-1}T_{i+1,j+1}^{-1}T_{i,j}^{-1}\,Y\\
   &=\sigma(T^{-1}_A) T_w P_kT_j^{-1}X\,T_{i+1,j+1}^{-1}T_{i,j}^{-1}\,Y\\
  & =\sigma(T^{-1}_A) T_w(-P_k) X\,T_{i+1,j+1}^{-1}T_{i,j}^{-1}\,Y=-\sigma(T^{-1}_A) P_kT_wT_B^{-1}
   \end{aligned}
  $$
   since $j<k$. Thus this case is proved. 

    If $i,i+1\notin B=\{b_1<\cdots<b_k\}$, then there exists $0\leq l\leq k$ such that $b_l<i, b_{l+1}>i+1$ \textcolor{black}{or all $b_l<i$}, where we use the convention $b_0=0$. 
    \textcolor{black}{In first case}, write $T^{-1}_B=XY$ where $X=T_{b_k,k}^{-1}\cdots T_{b_{l+1},l+1}^{-1}$ and $Y=T_{b_{l},l}^{-1}\cdots T_{b_1,1}^{-1}$. Clearly $YT_i^{-1}=T_i^{-1}Y$. Meanwhile observe that by \eqref{eq:braid}  we always have 
    $$
    T^{-1}_{c,a}T_b^{-1}=T_{b+1}^{-1}T^{-1}_{c,a}
    $$
    for any triple $a,b,c$ with $a\leq b< \color{black}{c-1}$. This together with $l\leq b_l\leq i+1,  b_{l+1}>i+1$ leads to 
    $$
    \begin{aligned}
    XT_i^{-1}=T_{b_k,k}^{-1}\cdots T_{b_{l+1},l+1}^{-1}T_i^{-1}&=T_{b_k,k}^{-1}\cdots T_{b_{l+2},l+2}^{-1} T^{-1}_{i+1}T_{b_{l+1},l+1}^{-1}=\cdots=T^{-1}_{i+k-l} X. 
    \end{aligned}
    $$
    Therefore 
    $$
    \begin{aligned}
     \wtT_{(A,B,w)}T_i^{-1}&= \sigma(T^{-1}_A) P_k T_w XY T_i^{-1}=\sigma(T^{-1}_A) P_k T_w T^{-1}_{i+k-l} XY\\
    & =\sigma(T^{-1}_A) P_k T_w T^{-1}_{i+k-l}T_B^{-1}. 
     \end{aligned}
    $$
    This case is proved as $T_w T^{-1}_{i+k-l}$ is a linear combination of $T_w$ and $T_{ws_{i+k-l}}$ and $ws_{i+k-l}\in \mathfrak{S}'_{n-k}$ due to the fact $i>l$.   
    
     \textcolor{black}{And if all $b_l<i$ ,we have $k<i$. So $\wtT_{(A,B,w)}T_i^{-1}=\sigma(T^{-1}_A) P_k T_w T^{-1}_{i}T_B^{-1}$ where $k<i$. $T_w T^{-1}_{i}$ is a linear combination of $T_w$ and $T_{ws_{i}}$ and $ws_{i}\in \mathfrak{S}'_{n-k}$.}
    
    If $i\in B$, $i+1\notin B$, write $B=\{b_1<\cdots<b_{l-1}<b_l=i<b_{l+1}<\cdots<b_k\}$ for some $1\leq l\leq k$. Accordingly  write $T_B=X T_{i,l}^{-1}Y$ where $Y=T_{b_{l-1},l-1}^{-1}\cdots T_{b_1,1}^{-1}$ 
     and $X=T_{b_k,k}^{-1}\cdots T_{b_{l+1},l+1}^{-1}$. Then
    \[
    \begin{aligned}
    \wtT_{(A,B,w)}T_i^{-1}&=  \sigma(T^{-1}_A) P_k T_wT_BT_i^{-1}=\sigma(T^{-1}_A) P_k T_wX T_{i,l}^{-1}T_i^{-1}Y\\
    &=\sigma(T^{-1}_A) P_k T_wX T_{i+1,l}^{-1}Y=\sigma(T^{-1}_A) P_k T_wT_{B'}^{-1},
    \end{aligned}
    \]
    where $B'=\{b_1<\cdots<b_{l-1}<i+1<b_{l+1}<\cdots<b_k\}$. Thus this case is proved. 
    
    If $i\notin B$, $i+1\in B$, likewise write $T_B=X T_{i+1,l}^{-1}Y$. By \eqref{eq:quad} we have 
    \[
    T_BT_i^{-1}=X T_{i+1,l}^{-1}T_i^{-1}Y
    =(q^{-1}-1)X T_{i+1,l}^{-1}Y+q^{-1}X T_{i,l}^{-1}Y
    =(q^{-1}-1)T_B^{-1}+q^{-1}T_{B'}^{-1},
    \]
    where $B'$ is obtained from $B$ with $i+1$ replaced by $i$. Hence this case is proved. Putting together, we obtain
    \begin{equation}\label{eq:tilde-Ti}
    \mathscr{H}'_{n,R}(q) T^{-1}_i \subseteq \mathscr{H}'_{n,R}(q),  \quad 1 \le i \le n-1. 
    \end{equation}
This together with \eqref{eq:imath-T} and \eqref{eq:imath-H} gives rise to 
 \begin{equation}\label{eq:tilde-Ti-left}
  T^{-1}_i \mathscr{H}'_{n,R}(q)  \subseteq \mathscr{H}'_{n,R}(q),  \quad 1 \le i \le n-1. 
    \end{equation}

    \medskip
    \noindent Next take $(A,B,w)\in\Gamma$ with $B=\{b_1<b_2<\cdots<b_k\}$ and we shall consider the multiplication by $\wtT_{(A,B,w)}= \sigma(T^{-1}_A) P_k T_w T_B^{-1}P_1$ by $P_1$ in three cases.

    If $1\in B$, then both $T_w$ and $T_B^{-1}$ commute with $P_1$ by (\ref{eq:Pcomm}) and $P_kP_1=P_k$. So
    \[
   \wtT_{(A,B,w)} P_1= \sigma(T^{-1}_A) P_kP_1 T_w T_B^{-1}= \wtT_{(A,B,w)} 
    \]

    If $1\notin B$ and $B\neq\varnothing$, then $b_j>j$ for each $1\leq j\leq k$. Observe that by \eqref{eq:Prec} we always have 
    \begin{equation}\label{eq:PTP}
    P_iT_{b,i}^{-1}P_i=-P_{i+1}T_{b,i+1}^{-1},
    \end{equation} 
    for any $1\leq i<b\leq n$. 
   Then repeatedly applying \eqref{eq:PTP} one can get 
    \[
    \begin{aligned}
    P_kT_B^{-1}P_1
    &=P_kP_1T_B^{-1}P_1
    =P_k(T_{b_k,k}^{-1}\cdots T_{b_2,2}^{-1})(P_1T_{b_1,1}^{-1}P_1)\\
    &=-P_k(T_{b_k,k}^{-1}\cdots T_{b_2,2}^{-1})P_2T_{b_1,2}^{-1}\\
    &=-P_kP_2(T_{b_k,k}^{-1}\cdots T_{b_2,2}^{-1})P_2T_{b_1,2}^{-1}\\
    &=\cdots\\
    &=(-1)^{k}P_{k+1}(T_{b_k,k+1}^{-1}\cdots T_{b_2,3}^{-1}T_{b_1,2}^{-1})
    =(-1)^{k}P_{k+1}T_{B'}^{-1},
    \end{aligned}
    \]
    with $B'=\{1<b_1<\dots<b_k\}$.
    So
    \begin{equation}\label{eq:wtT-P1}
    \sigma(T_A^{-1})P_kT_wT_B^{-1}P_1= \sigma(T_A^{-1})T_wP_kT_B^{-1}P_1
    =(-1)^k\sigma(T_A^{-1})T_wP_{k+1}T_{B'}^{-1}.
    \end{equation}
    Meanwhile by $P_{k+1}T_{B'}^{-1}=\wtT_{(C, B',\text{id})}$ $C=\{1,2,\ldots,k+1\}$. Thus $P_{k+1}T_{B'}^{-1}\in\mathscr{H}'_{n,R}(q)$.  Then by \eqref{eq:tilde-Ti-left} and \eqref{eq:wtT-P1} one can deduce that 
    $\wtT_{(A,B,w)}P_1\in\mathscr{H}'_{n,R}(q)$. 
    
 The remaining situation is $B=\varnothing$. Correspondingly,  we need to prove that $T_wP_1$ is in the $R$-span of $\mathcal{B}$. This is straightforward due to \eqref{eq:tilde-Ti-left}.  
 
 Putting together, we obtain that $\mH=\mathscr{H}'_{n,R}(q)$, that is, $\mH$ is spanned by the set $\{\wtT_{(A,B,w)}\mid (A,B,w)\in\Gamma\}$. 
The fact $|\mathcal{B}|=|\Gamma|$ and Lemma \ref{lem:basis-Wan} imply that $\mathcal{B}$ is also a basis and thus the proposition is proved.   
\end{proof}
%%%%%%%%%%%%%

Let $J_k$ be the two-sided ideal of $\mH$ generated by $P_k$ and we apply the convention $P_0=1, P_{n+1}=0 $. Then, we have that 
$$0 = J_{n+1} \subset J_n \subset J_{n-1} \subset \cdots \subset J_1 \subset J_0 = \mH$$ 
is a chain of 2-sided ideal. Moreover, we have,
\begin{thm}\label{thm:Ji-basis}
The set $\{\widetilde{T}_{(A,B,w)} \mid (A,B,w) \in \cup_{k=i}^n \Gamma_k\}$ is a basis of the two-sided ideal $J_i$ for each $0\leq i\leq n+1$.
\end{thm}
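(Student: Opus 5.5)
Let $M_i$ denote the $R$-span of $\{\wtT_{(A,B,w)}\mid (A,B,w)\in\cup_{k=i}^n\Gamma_k\}$. By Proposition \ref{pro:basis} this set is linearly independent, so it suffices to show $M_i=J_i$. The edge cases are immediate: $J_{n+1}=0=M_{n+1}$ (empty span), and $J_0=\mH=M_0$ by Proposition \ref{pro:basis}. For $1\le i\le n$ the plan has two steps: show $M_i\subseteq J_i$, and show that $M_i$ is a two-sided ideal containing $P_i$.

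\emph{Inclusion $M_i\subseteq J_i$.} For $k\ge i$ we have $P_k=P_kP_i$ by \eqref{eq:Pnest}. Hence $\wtT_{(A,B,w)}=\sigma(T_A^{-1})P_kP_iT_wT_B^{-1}\in J_i$, and so $M_i\subseteq J_i$.

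\emph{$M_i$ is an ideal containing $P_i$.} First, $P_i=\wtT_{([i],[i],\mathrm{id})}\in M_i$, since $T_{[i]}=1$ as $w_{[i]}=\mathrm{id}$. Next, $\sigma(M_i)=M_i$ by \eqref{eq:imath-T}. Because $\sigma$ is an anti-involution, right-closure of $M_i$ then gives left-closure. Since $\mH$ is generated by $T_j^{\pm1}$ and $P_1$, it remains to show $M_iT_j^{-1}\subseteq M_i$ and $M_iP_1\subseteq M_i$.

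For this I will reread the computations in the proof of Proposition \ref{pro:basis} and track the index $k$.
\begin{itemize}
\item Multiplication by $T_j^{-1}$: in all four cases $\wtT_{(A,B,w)}T_j^{-1}$ is a combination of elements $\sigma(T_A^{-1})P_kT_{w'}T_{B'}^{-1}$ with the same $k$. These lie in $\Gamma_k$, so $M_iT_j^{-1}\subseteq M_i$.
\item Multiplication by $P_1$: the first case returns the same basis element. In the second case, \eqref{eq:wtT-P1} gives $(-1)^k\sigma(T_A^{-1})T_wP_{k+1}T_{B'}^{-1}$.
\end{itemize}
The delicate point is this second case. Here $T_wP_{k+1}T_{B'}^{-1}$ is obtained by left multiplication of $\wtT_{(C,B',\mathrm{id})}$ (with $k+1$ in place of $k$) by elements of the Hecke subalgebra. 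Its membership in $M_i$ therefore requires left-closure of $M_{k+1}$ under $T_j^{\pm1}$. I obtain this from the right-closure already established together with $\sigma$-stability, since $\sigma(M_{k+1})=M_{k+1}$; left multiplication by $T_j^{\pm 1}$ (via $T_j=qT_j^{-1}+(q-1)$) corresponds to right multiplication under $\sigma$. This avoids any circularity, because $T$-closure is proved independently of $P_1$-closure. Then $M_{k+1}\subseteq M_i$ since $k+1>i$. The third case, $B=\varnothing$, forces $k=0$ and only arises for $i=0$, where there is nothing to prove.

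Combining the two steps, $M_i$ is a two-sided ideal containing $P_i$, so $J_i\subseteq M_i$, and together with the inclusion $M_i\subseteq J_i$ we get $M_i=J_i$. Linear independence of the set then gives the basis claim. The main obstacle is the $P_1$ step: one must verify that the rewriting in \eqref{eq:wtT-P1} never lowers the index $k$, and one must handle the left Hecke action on the resulting $P_{k+1}$ terms without circularity, which the order of the steps above is designed to do.
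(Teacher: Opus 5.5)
Your proof is correct and follows the same route as the paper. You show that the span $M_i$ lies in $J_i$ via $P_k=P_kP_i$, and then show that $M_i$ is a two-sided ideal containing $P_i=\wtT_{([i],[i],\mathrm{id})}$ by reusing the computations from the proof of Proposition~\ref{pro:basis}. Your explicit tracking of the layer index $k$, and your use of $\sigma$-stability to get left Hecke-closure of $M_{k+1}$ for the term $(-1)^k\sigma(T_A^{-1})T_wP_{k+1}T_{B'}^{-1}$, fill in exactly what the paper leaves as ``easily check''.
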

\begin{proof}
Fix $0\leq i\leq n+1$. Observe that the set $\{ \widetilde{T}_{(A,B,w)} \mid (A, B, w) \in \bigcup_{k=i}^{n} \Gamma_k \}$ is linearly independent since it is a subset of the basis $\{ \widetilde{T}_{(A,B,w)} \mid (A, B, w) \in \Gamma \}$ of $\mH$ given in Proposition \ref{pro:basis}. 
 Note that for arbitrary $k\geq i$, by \eqref{eq:Pnest} we have $P_k=P_kP_i\in J_i$ and hence  $\widetilde{T}_{(A,B,w)}=\sigma(T_A^{-1})T_wP_kT_B^{-1} \in J_i$ for all $(A, B, w) \in \bigcup_{k=i}^{n} \Gamma_k$.   It remains to show that 
$J_i$ is spanned by $\{ \widetilde{T}_{(A,B,w)} \mid (A, B, w) \in \bigcup_{k=i}^{n} \Gamma_k \}$.  Let $J_i'$ be the $R$-span of $\{ \widetilde{T}_{(A,B,w)} \mid (A, B, w) \in \bigcup_{k=i}^{n} \Gamma_k \}$. So $J_i'\subseteq J_i$. Then it suffices to prove 
$J_i\subseteq J_i'$. On the other hand, as $J_i$ is the two-sided ideal of $\mH$ generated by $P_i$ and moreover $P_i = \widetilde{T}_{(A,A,\text{id})}\in J_i'$ with $A=\{1<2<\cdots<i\}$. Thus it suffices to prove that $J_i'$ is a two-sided ideal in $\mH$. 
By the proof in Proposition \ref{pro:basis}, we can easily check that $\widetilde{T}_{(A,B,w)}T_l, T_l\widetilde{T}_{(A,B,w)}, \widetilde{T}_{(A,B,w)}P_1, P_1\widetilde{T}_{(A,B,w)}\in \color{black}{J'_i}$ for $1\leq l\leq n-1$ and $(A,B,w)\in \Gamma_k$ with $k\geq i$. 
This together with the fact that $\mH$ is generated by $T_1, \dots, T_{n-1}$ and $P_1$ implies $J_i'$ is indeed a two-sided ideal in $\mH$. Thus the theorem is proved. 
\end{proof}

By Theorem \ref{thm:Ji-basis}, we can easily obtain the following result.
\begin{cor}\label{cor:basis-Jk-Jk+1}
The set of images $\{\widetilde{T}_{(A,B,w)} + J_{i+1}\mid (A,B,w) \in \Gamma_i\}$ under the canonical projection $J_i\rightarrow J_i/J_{i+1}$ is a basis of $J_i/J_{i+1}$ for each $0\leq i\leq n$.
\end{cor}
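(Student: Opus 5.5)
The corollary is a formal consequence of Theorem~\ref{thm:Ji-basis} applied twice, once to $J_i$ and once to $J_{i+1}$, and the argument is purely module-theoretic. Fix $0\le i\le n$. By Theorem~\ref{thm:Ji-basis}, $J_i$ is a free $R$-module with basis
\[
\mathcal{B}_i=\{\wtT_{(A,B,w)}\mid (A,B,w)\in \cup_{k=i}^n\Gamma_k\}.
\]
The same theorem, applied with $i+1$ in place of $i$ (which is allowed since $0\le i+1\le n+1$), shows that $J_{i+1}$ is free with basis $\mathcal{B}_{i+1}=\{\wtT_{(A,B,w)}\mid (A,B,w)\in\cup_{k=i+1}^n\Gamma_k\}$. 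Since the $\Gamma_k$ are pairwise disjoint, $\mathcal{B}_{i+1}\subseteq\mathcal{B}_i$ and $\mathcal{B}_i=\mathcal{B}_{i+1}\sqcup\{\wtT_{(A,B,w)}\mid(A,B,w)\in\Gamma_i\}$. For $i=n$ we have $\mathcal{B}_{n+1}=\varnothing$, consistent with $J_{n+1}=0$.

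The key step is then the standard fact about free modules. Suppose $M$ is free on a basis $X\sqcup Y$ and $N$ is the submodule spanned by $Y$. Then $M=\operatorname{span}_R(X)\oplus N$, and the projection $M\to M/N$ restricts to an isomorphism $\operatorname{span}_R(X)\to M/N$. Hence the images of the elements of $X$ form an $R$-basis of $M/N$.

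We apply this with $M=J_i$, $N=J_{i+1}$, $Y=\mathcal{B}_{i+1}$ and $X=\{\wtT_{(A,B,w)}\mid(A,B,w)\in\Gamma_i\}$. Spanning of the images is immediate, since every element of $J_i$ is an $R$-combination of elements of $X$ plus an element of $J_{i+1}$. For linear independence, suppose $\sum_{\Gamma_i} c_{(A,B,w)}\wtT_{(A,B,w)}\in J_{i+1}$. Expand the left side in the basis $\mathcal{B}_{i+1}$ of $J_{i+1}$. This yields a linear relation among the elements of $\mathcal{B}_i$, and by Proposition~\ref{pro:basis} it forces all $c_{(A,B,w)}=0$.

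No step here is a genuine obstacle. The only point requiring care is that the identification of $J_{i+1}$ with the span of $\mathcal{B}_{i+1}$ really is the content of Theorem~\ref{thm:Ji-basis}, namely that this span is a two-sided ideal, and we are taking that theorem as given.
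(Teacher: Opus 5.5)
Your proposal is correct and follows the paper's approach exactly. The paper simply states that the corollary follows from Theorem~\ref{thm:Ji-basis}, and your argument (applying that theorem to both $J_i$ and $J_{i+1}$, then using the splitting of a free module along a partition of its basis) writes out the routine step the paper leaves implicit.
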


}

%%%%%%%%%
%%%%%%%%%
%%%%%==================================================================================================
\section{Cellular bases of $\mH$}\label{sec:cellular}
%%%%%==================================================================================================
In this section, we will prove the cellularity of $\mH$ by constructing a cellular basis in the sense of \cite{GL96}.  We also show a Morita equivalence between $\mH$  and direct sum of the Hecke algebras associated to symmetric groups.

\subsection{Several key propositions for the ideals $J_k$}

Suppose $(A, B, w),(C, D, w')\in \Gamma_{k}$ with $0\leq k\leq n$. Recall $P_{k} T_{w} = T_{w} P_{k}$ due to \eqref{eq:Pcomm} and \eqref{eq:Pminus}. Hence 
\begin{align*}
\wtT_{(A,B,w)} \wtT_{(C,D,w')} &= \sigma\left(T_{A}^{-1}\right) P_{k} T_{w} T_{B}^{-1} \sigma\left(T_{C}^{-1}\right) P_{k} T_{w'} T_{D}^{-1} \\
&= \sigma\left(T_{A}^{-1}\right) T_{w} X_{B,C} T_{w'} T_{D}^{-1},
\end{align*}
where
\begin{equation}\label{eq:X-BC}
X_{B, C} = P_{k} T_{B}^{-1} \sigma\left(T_{C}^{-1}\right) P_{k}.
\end{equation}

\begin{prop}\label{prop:XBCq}
Suppose $B,C\in\mathscr{F}^n_k$ with $0\leq k\leq n$. Then 
$$
X_{B, C} \equiv
\begin{cases}
q^{\alpha_{B}} P_{k} \pmod{J_{k+1}} & \text{if } B = C, \\
0 \pmod{J_{k+1}}  & \text{if } B \neq C,
\end{cases}
$$
where, for $B = \{b_{1}<b_{2}< \cdots<b_{k}\}$, $\alpha_{B} = -\sum_{a=1}^{k} b_{a} + \frac{k(k+1)}{2}$.
\end{prop}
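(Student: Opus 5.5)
The proof will be a simultaneous induction on $N(B,C)=\ell(w_B)+\ell(w_C)$, peeling one simple reflection at a time off the adjacent factors $T_B^{-1}$ and $\sigma(T_C^{-1})$ in the middle of $X_{B,C}$. Since $\alpha_B=-\sum_a(b_a-a)=-\ell(w_B)$, the claim for $B=C$ is the congruence $X_{B,B}\equiv q^{-\ell(w_B)}P_k \pmod{J_{k+1}}$.

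Two facts from Section \ref{sec: basics} drive the induction. First, \eqref{eq:Pcomm} and \eqref{eq:Pminus} say that $P_k$ commutes with $T_j^{\pm1}$ for $j>k$ and absorbs $T_j^{\pm1}$ as $-1$ for $j<k$. Second, \eqref{eq:Prec} together with \eqref{eq:PTP} gives $P_kT^{-1}_{c,k}P_k=-P_{k+1}T^{-1}_{c,k+1}\in J_{k+1}$ for $c>k$, and the same holds for the $\sigma$-image $P_k\sigma(T^{-1}_{c,k})P_k$. Recall also that $T_B^{-1}=T^{-1}_{b_k,k}\cdots T^{-1}_{b_1,1}$ ends with $T^{-1}_{b_1-1}$ when $b_1>1$, while $\sigma(T_C^{-1})=\sigma(T^{-1}_{c_1,1})\cdots$ begins with $T^{-1}_{c_1-1}$ when $c_1>1$.

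\emph{Base case.} Here $N(B,C)=0$, so $B=C=[k]$ and $X=P_k$, as required. More generally, if one of $B,C$ equals $[k]$, say $B=[k]\neq C$, then $X_{[k],C}=P_k\sigma(T_C^{-1})P_k$. I move $P_k$ inward past the factors of $\sigma(T_C^{-1})$, exactly as in the computation of $P_kT_B^{-1}P_1$ in the proof of Proposition \ref{pro:basis}. A factor $\sigma(T^{-1}_{c_a,a})$ with $c_a>a$ sits at some stage next to $P_a$, and $P_k=P_kP_a$. The $\sigma$-version of \eqref{eq:PTP} then produces a factor $P_{a+1}$, and hence $P_k\cdot(\cdots)P_{a+1}(\cdots)P_k$. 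Commuting $P_k$ across the remaining factors and using \eqref{eq:Pnest}, this lies in $J_{k+1}$.

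\emph{Inductive step.} Suppose $B\neq[k]$ and $C\neq[k]$. I consider $T_s^{-1}\sigma(T_C^{-1})$, where $s=s_{b_1-1}$ is the last reflection of $T_B^{-1}$ (if $b_1=1$, I peel off the last nontrivial block $T^{-1}_{b_a,a}$ instead). I run the four-case analysis of the proof of Proposition \ref{pro:basis}, applied on the left via $\sigma$ as in \eqref{eq:tilde-Ti-left}. The outcome is one of three forms:
\begin{itemize}
\item $-\sigma(T_C^{-1})$, where the extra reflection passes through to $P_k$ and is absorbed as $-1$;
\item $\sigma(T_{C'}^{-1})$ with $\ell(w_{C'})=\ell(w_C)+1$;
\item a combination $(q^{-1}-1)\sigma(T_C^{-1})+q^{-1}\sigma(T_{C'}^{-1})$ with $\ell(w_{C'})=\ell(w_C)-1$.
\end{itemize}
The first case needs care: it occurs when $i,i+1\in C$ in the corresponding right-handed case, and there the reflection reaches $P_k$ as some $T_j^{-1}$ with $j<k$. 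It is also the case that uses the hypothesis $B\neq C$. In every case $X_{B,C}$ becomes a combination of terms $X_{B',C''}$ with $N(B',C'')<N(B,C)$, where $B'$ is $B$ with $b_1$ replaced by $b_1-1$.

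When $B=C$, the relevant case is the third one, the quadratic relation $T_s^{-2}=(q^{-1}-1)T_s^{-1}+q^{-1}$. It gives
\[
X_{B,B}=(q^{-1}-1)X_{B,B'}+q^{-1}X_{B',B'}.
\]
By induction $X_{B,B'}\in J_{k+1}$ since $B\neq B'$, and $X_{B',B'}\equiv q^{-\ell(w_{B'})}P_k=q^{1-\ell(w_B)}P_k$. Hence $X_{B,B}\equiv q^{-\ell(w_B)}P_k$, as required. When $B\neq C$, the inductive step produces only pairs $(B',C'')$ with $B'\neq C''$, or identity-pair terms whose coefficients cancel; either way the reduced terms lie in $J_{k+1}$ by induction.

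The main obstacle is this bookkeeping when $B\neq C$. I must make sure that a reduction never lands on an identical pair $B'=C''$ with a nonzero coefficient, since that would contribute a multiple of $P_k$ not lying in $J_{k+1}$. I also need to check that peeled reflections which pass through $P_k$ have index $j<k$ (absorbed as $-1$) or $j>k$ (commuted away), and never $j=k$ outside the pattern $P_kT_k^{-1}P_k$, which lands in $J_{k+1}$. If this direct bookkeeping becomes unwieldy, I will instead expand $T_B^{-1}\sigma(T_C^{-1})$ in terms of $T_x^{-1}$ and sort the $x$ by $(\Sn_k\times\Sn'_{n-k})$-double cosets. Nontrivial double cosets contribute to $J_{k+1}$ through $P_kT_k^{-1}P_k=-P_{k+1}$. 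The trivial double coset occurs only when $w_C^{-1}w_B$ lies in the Young subgroup, which forces $B=C$.
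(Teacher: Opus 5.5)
Your strategy is the paper's. You peel $T_i^{-1}$ (with $b_a=i+1$, $a$ minimal such that $b_a>a$) off the right end of $T_B^{-1}$, push it through $\sigma(T_C^{-1})$ using the four cases from the proof of Proposition~\ref{pro:basis}, and use $T_i^{-2}=q^{-1}+(q^{-1}-1)T_i^{-1}$ when $B=C$.

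\textbf{The induction measure fails.} In the case $i\in C$, $i+1\notin C$ you get $X_{B,C}=X_{B',C'}$ with $\ell(w_{B'})=\ell(w_B)-1$ and $\ell(w_{C'})=\ell(w_C)+1$. So $N(B',C')=N(B,C)$, and your own second bullet contradicts the sentence ``in every case \dots\ $N(B',C'')<N(B,C)$''. Concretely, for $k=1$, $n=3$ one gets $X_{\{3\},\{2\}}=X_{\{2\},\{3\}}$, both with $N=3$.

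The repair is what the paper's lexicographic order on $B$ achieves: induct on $\ell(w_B)$ alone, proving the statement for all $C$ at once. This works because every case replaces $B$ by $B'$ with $\ell(w_{B'})=\ell(w_B)-1$. Under this induction the cross term in the $B=C$ step must have $B'$ in front. It does, because $T_i^{-1}\sigma(T_{B'}^{-1})=\sigma(T_B^{-1})$, so the correct identity is $X_{B,B}=(q^{-1}-1)X_{B',B}+q^{-1}X_{B',B'}$, not the $X_{B,B'}$ you wrote. Alternatively, invoke $\sigma(X_{B,C})=X_{C,B}$ and $\sigma(J_{k+1})=J_{k+1}$.

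\textbf{Two further steps are left undone.} First, your ``main obstacle'' is settled by a one-line check. $B'$ contains $i$ but not $i+1$. The second index either contains $i+1$ or omits $i$:
\begin{itemize}
\item it contains $i+1$ when $i,i+1\in C$, in the first term of the case $i\notin C$, $i+1\in C$, and for $C'$ in the case $i\in C$, $i+1\notin C$;
\item it omits $i$ when $i,i+1\notin C$.
\end{itemize}
In the one remaining term, $C'=B'$ if and only if $C=B$. So no identical pair ever arises when $B\neq C$, and no cancellation is needed. The double-coset fallback can be dropped; as sketched it controls only the single element $w_C^{-1}w_B$, not the support of the expansion.

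Second, your base case stops after one application of the $\sigma$-image of \eqref{eq:PTP}. That only exhibits a factor $P_{a+1}$, which places the element in $J_{a+1}$, not $J_{k+1}$, when $a<k$. Also $P_k$ does not commute with $T_k$, so ``commuting $P_k$ across the remaining factors'' is not available. What works is to iterate. Since $c_j>j$ for all $j\ge a$, for each $j=a,a+1,\dots,k$:
\begin{itemize}
\item insert $P_j$ to the right of the block $\sigma(T^{-1}_{c_j,j})$ via $P_k=P_jP_k$; it commutes past the later blocks;
\item on the left, use $P_k=P_kP_a$ when $j=a$, and the $P_j$ produced at the previous step when $j>a$;
\item apply $P_j\sigma(T^{-1}_{c_j,j})P_j=-\sigma(T^{-1}_{c_j,j+1})P_{j+1}$.
\end{itemize}
This ends at $(-1)^{k-a+1}P_k\,\sigma(T^{-1}_{c_a,a+1})\cdots\sigma(T^{-1}_{c_k,k+1})\,P_{k+1}\in J_{k+1}$. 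Written out this way, your base case is more direct than the paper's nested lexicographic induction on $C$.
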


\begin{proof}
For any two sets $A, A'\in\mathscr{F}^n_k$ with $A=\{a_1<a_2<\cdots<a_k\}$ and $A'=\{a'_1<a'_2<\cdots<a'_k\}$, we write 
\begin{equation}\label{eq:lex-ord}
A<A'\text{ if } a_1=a'_1,\ldots,a_j=a'_j, a_{j+1}<a'_{j+1}
\end{equation}
for some $1\leq j\leq n-1$. This gives  the lexicographical order among the elements in $\mathscr{F}^n_k$.
By \eqref{eq:X-BC}, \eqref{eq:Pcomm} and \eqref{eq:Pminus} we can write $X_{B, C}$ in the following way:
\begin{equation}\label{eq:XBC-explicit}
\begin{aligned}
X_{B, C} = & P_{k}\left(T_{k}^{-1} T_{k+1}^{-1} \cdots T_{b_{k}-1}^{-1}\right) P_{k-1}\left(T_{k-1}^{-1} T_{k}^{-1} \cdots T_{b_{k-1}-1}^{-1}\right) P_{k-2} \cdots \\
& \cdots P_{1}\left(T_{1}^{-1} T_{2}^{-1} \cdots T_{b_{1}-1}^{-1}\right)\left(T_{c_{1}-1}^{-1} \cdots T_{2}^{-1} T_{1}^{-1}\right) P_{1} \cdots \\
& \cdots P_{k-2}\left(T_{c_{k-1}-1}^{-1} \cdots T_{k}^{-1} T_{k-1}^{-1}\right) P_{k-1}\left(T_{c_{k}-1}^{-1} \cdots T_{k+1}^{-1} T_{k}^{-1}\right) P_{k}, 
\end{aligned}
\end{equation}
where $B=\{b_1<b_2\cdots<b_k\}$ and $C=\{c_1<c_2\cdots<c_k\}$.

\textbf{Case 1}: Assume $B\neq C$. We shall show $X_{B,C} \equiv 0 \pmod{J_{k+1}}$ by induction using the lexicographic order on $B$.

Assume $B_0=\{1, \dots, k\}$ is the minimal element with respect to the \eqref{eq:lex-ord} in $\mathscr{F}^n_k$.  Here $T_B^{-1} = 1$, so by \eqref{eq:XBC-explicit} 
\begin{equation}\label{eq:XBC-further}
X_{B_0,C} = P_k (T_{c_1-1}^{-1} \cdots T_1^{-1}) P_1 (T_{c_2-1}^{-1} \cdots T_2^{-1}) P_2 \cdots P_{k-1} (T_{c_k-1}^{-1} \cdots T_k^{-1}) P_k,
\end{equation} 
with $C \neq B_0$.
We prove the right hand side of \eqref{eq:XBC-further} vanishes by induction using the lexicographic order on $C$. 
The smallest $C$ such that $C\neq B_0$ is $C_{\text{min}} = \{1,\dots,k-1,k+1\}$, for which $X_{B_0,C_{\text{min}}} = P_k T_k^{-1} P_k = -P_{k+1} \equiv 0 \pmod{J_{k+1}}$ by \eqref{eq:XBC-further} and \eqref{eq:Prec}. 
So the right hand side of \eqref{eq:XBC-further} vanishes in the case $C=C_{\text{min}}$. 
Now assume it vanishes for all $C' < C$ ($C' \neq B$). Suppose $u$ is the smallest positive integer such that $c_i=i$ for $1\leq i\leq u-1$ and $c_u>u$. Then by \eqref{eq:XBC-further} we have 
\begin{equation}\label{eq:XBC-further-2}
X_{B_0,C} = P_k (T_{c_u-1}^{-1} \cdots T_u^{-1}) P_u (T_{c_{u+1}-1}^{-1} \cdots T_{u+1}^{-1}) P_{u+1} \cdots P_{k-1} (T_{c_k-1}^{-1} \cdots T_k^{-1}) P_k,
\end{equation} 

If  $c_u-1\neq k$,  then $T_{c_u-1}^{-1}$ commutes with $P_k$ by \eqref{eq:Pcomm} and \eqref{eq:Pminus}, so by \eqref{eq:XBC-further-2} we obtain 
   \begin{equation}\label{eq:XBC-further-3}
   \begin{aligned}
&X_{B_0,C} \\
&=T_{c_u-1}^{-1} P_k (T_{c_u-2}^{-1} \cdots T_u^{-1}) P_u (T_{c_{u+1}-1}^{-1} \cdots T_{u+1}^{-1}) P_{u+1} \cdots P_{k-1} (T_{c_k-1}^{-1} \cdots T_k^{-1}) P_k\\
&=T_{c_u-1}^{-1}  X_{B_0,C'}
\end{aligned}
\end{equation}  
with $C'=\{c'_1<c'_2<\cdots<c'_{u-1}<c_u'<\cdots<c_k'\}$ such that $c_u'=c_u-1$ and $c_l'=c_l$ for $l\neq u$. Clearly $C'<C$ and thus by induction $X_{B_0,C'}=0\text{ mod }J_{k+1}$. Therefore by \eqref{eq:XBC-further-3} 
we have $X_{B_0,C}=0\text{ mod }J_{k+1}$ as $J_{k+1}$ is a two-sided ideal in $\mH$. 
    
 If $c_u-1= k$, then by \eqref{eq:Pcomm} and \eqref{eq:Pminus} as well as \eqref{eq:Prec} we have 
 \[
          \begin{aligned}
          P_k T_{c_u-1}^{-1} \cdots T_u^{-1} P_u
          &= P_k T_k^{-1} \cdots T_{u+1}^{-1} P_u T_u^{-1} P_u \\
          &= -P_k T_k^{-1} \cdots T_{u+1}^{-1} P_{u+1} \\
          &= -P_k T_k^{-1} \cdots T_{u+2}^{-1} P_{u+1} T_{u+1}^{-1} P_{u+1} \\
          &= (-1)^2 P_k T_k^{-1} \cdots T_{u+2}^{-1} P_{u+2} \\
          &= \cdots \\
          &= (-1)^{k-u} P_k T_k^{-1} P_k
          = (-1)^{k-u+1} P_{k+1}.
          \end{aligned}
          \]
This together with \eqref{eq:XBC-further-2} implies 
$$
X_{B_0,C}=(-1)^{k-u+1} P_{k+1}(T_{c_{u+1}-1}^{-1} \cdots T_{u+1}^{-1}) P_{u+1} \cdots P_{k-1} (T_{c_k-1}^{-1} \cdots T_k^{-1}) P_k\in J_{k+1}. 
$$
Putting together we obtain 
$$
X_{B_0,C}\equiv 0\text{ mod } J_{k+1} \quad \text{ if }C\neq B_0. 
$$

By induction, we assume $X_{B',C}\equiv 0\text{ mod } J_{k+1}$ for all $B' < B$ with respect to the order \eqref{eq:lex-ord} and any $C \neq B'$.  Take an arbitrary $C \neq B$.

If $C < B$, then $\sigma(X_{B,C}) = X_{C,B}$ by \eqref{eq:XBC-explicit}. Since $C< B$, by induction we have $X_{C,B} \equiv 0\text{ mod }J_{k+1}$, and  hence $X_{B,C} \equiv 0 \pmod{J_{k+1}}$ due to the fact $\sigma(J_{k+1})=J_{k+1}$.

If $C > B$, similarly there exists unique $s$ such that $b_a=a$ for $1\leq a\leq s-1$ and $b_s=i+1>s$ for some $i\geq 1$. Then we can write  $T_B^{-1} = T_{B'}^{-1} T_i^{-1}$ with $B' < B$ and $B'$ is obtained via $B$ be replacing $b_s=i+1$ with $b_s-1=i$. Hence by \eqref{eq:XBC-explicit} we have 
\begin{equation}\label{eq:XBC-further-4}
X_{B,C} = P_k T_{B'}^{-1} \bigl( T_i^{-1} \sigma(T_C^{-1}) P_k \bigr).
\end{equation}
We shall compute the term $T_i^{-1} \sigma(T_C^{-1}) P_k$ inside the right hand side of \eqref{eq:XBC-further-4}.  Observe $T_i^{-1} \sigma(T_C^{-1}) P_k=\sigma(P_kT_C^{-1}T_i^{-1})$.  
By the computation for $\wtT_{(A,B,w)}T^{-1}_{i}$ in the proof of Proposition \ref{pro:basis}, we have the following four cases: 

(1) $i,i+1\in C$. Then $P_kT_C^{-1}T_i^{-1}=-P_kT_C^{-1}$ by the computation in the proof of Proposition \ref{pro:basis}  and hence $T_i^{-1} \sigma(T_C^{-1}) P_k=-\sigma(T_C^{-1})P_k$. This together with \eqref{eq:XBC-further-4} implies 
$X_{B,C} =-P_kT_{B'}^{-1}\sigma(T_C^{-1})P_k=-X_{B',C}\equiv 0\text{ mod }J_{k+1}$ by induction since $C>B>B'$. 
 
(2) $i,i+1\notin C$. The again by the computation in the proof of Proposition \ref{pro:basis} we obtain $T_C^{-1}T_i^{-1}=T_j^{-1}T_{C}^{-1}$ for some $j>k$. Thus $T_i^{-1}\sigma(T_C^{-1})P_k=\sigma(P_kT_C^{-1}T_i^{-1})=\sigma(P_kT_j^{-1}T_C^{-1})= \sigma(T_C^{-1})P_k\,T_j^{-1}$. Then by \eqref{eq:XBC-further-4} we have  $
          X_{B,C}=P_k T_{B'}^{-1}\sigma(T_C^{-1})P_k\,T_j^{-1}=X_{B',C}\,T_j^{-1}
=\equiv 0\text{ mod }J_{k+1}$ by induction since $C>B>B'$. 
        
(3)  $i\in C$, $i+1\notin C$. Then $T_C^{-1}T_{i}^{-1}=T^{-1}_{C'}$ where $C'$ is obtained via $C$ by replacing  $i$ with $i+1$ and hence $X_{B,C} = P_k T_{B'}^{-1} \bigl( \sigma(T_{C'}^{-1}) P_k \bigr)=X_{B',C'}=\equiv 0\text{ mod }J_{k+1}$ by induction since $C'>C>B>B'$. .

(4)  $i\notin C$, $i+1\in C$. Then 
\begin{equation}\label{eq:XBC-further-5}
    T^{-1}_CT_i^{-1}  =(q^{-1}-1)T_C^{-1}+q^{-1}T_{C'}^{-1},
   \end{equation}
    where $C'$ is obtained from $C$ with $i+1$ replaced by $i$.
          Write $B=\{b_1<\cdots<b_k\}$, $C=\{c_1<\cdots<c_k\}$.
          By construction $B'$ is $B$ with $i+1$ replaced by $i$, so $i+1\in B$ and $i\notin B$.
          Recall that $s$ be the index with $b_s=i+1$; denote by $r$ the index with $c_r=i+1$.
          Since $C>B$, let $p$ be the smallest index with $c_p>b_p$  and $c_j=b_j$ for $1\leq j<p$.

          \emph{If $s<p$}, then  $c_s=b_s=i+1$, hence $r=s$. The clearly $C'>B'$.

          \emph{If $s=p$}, then $b_p=i+1$, $b'_p=i$. As $c_p>b_p=i+1$ we have $c_p\ge i+2$, so $r\neq p$ and  hence $r>p$, otherwise $b_r=c_r=i+1=b_p$, contradicting distinctness. Thus we have $c'_j=b'_j$ for $j<p$; and $c'_p=c_p\ge i+2>i=b'_p$. Hence $C'>B'$.

          \emph{If $s>p$,} then $b'_p=b_p$, and  $c'_j=b'_j$ for $1\leq j<p$.
          If $r=p$, then $c_p=i+1$, $c'_p=i$, and $b_p<c_p=i+1$. Meanwhile $i\notin B$, thus  $b_p\le i-1$. This means $c'_p=i>b_p=b'_p$.
          If $r\neq p$, then $c'_p=c_p>b_p=b'_p$.
          Hence $C'>B'$.
Putting together and by \eqref{eq:XBC-further-5} as well as \eqref{eq:XBC-further-4}  we obtain 
$X_{B,C} = P_k T_{B'}^{-1} \bigl( \sigma(T_{C}^{-1}) P_k \bigr)=(q^{-1}-1)X_{B',C}+q^{-1}X_{B',C'}\equiv 0\text{ mod }J_{k+1}$ by induction since $C>C'>B'$ and $B>B'$.

\textbf{Case 2}: Assume $B=C$. We shall $X_{B,B} \equiv q^{\alpha_B} P_k \pmod{J_{k+1}}$.
If $B = \{1,\dots,k\}$ then $\alpha_B = 0$ and $X_{B,B} = P_k = q^0 P_k$. Otherwise as above there exists a unique $s$ such that $b_a=a$ for $1\leq a\leq s-1$ and $b_s=i+1>s$. Then 
by \eqref{eq:XBC-explicit} we have 
$$
\begin{aligned}
X_{B,B}&=P_k T_B^{-1} \sigma(T_B^{-1}) P_k
=P_k \cdots T_i^{-1} T_i^{-1} \cdots P_k\\
&=q^{-1} X_{B',B'} - q^{-1}(q-1) X_{B,B'}\equiv q^{-1} X_{B',B'} \text{ mod }J_{k+1}
\end{aligned}
$$
due to the fact that $T_i^{-2} = q^{-1} - q^{-1}(q-1)T_i^{-1}$ and $X_{B,B'} \equiv 0 \pmod{J_{k+1}}$ since $B \neq B'$, where $B'$ is obtained from $B$ by replacing $i+1$ by $i$.
Repeating this process we eventually $B_0 = \{1,\dots,k\}$ with $X_{B_0,B_0} = P_k$.  Each step multiplies by $q^{-1}$, decreasing $\alpha$ by $1$.  By induction,
\[
X_{B,B} \equiv q^{\alpha_B} P_k \pmod{J_{k+1}}, 
\]
with $-\sum_{i=s}^k(b_i-i)=-\sum_{i=1}^k(b_i-i)=-\sum_{i=1}^k b_i+\frac{k(k+1)}{2}=\alpha_B$ as noting $b_i-i=0$ for $1\leq i\leq s-1$ by the choice of $s$. 
Thus the proposition is proved.
\end{proof}

\begin{prop}\label{prop:layer-mult-basis}
Let $(A, B, w),(C, D, w') \in \Gamma_{k}$ with $0\leq k\leq n$. Then 
$$
\wtT_{(A,B,w)} \wtT_{(C,D,w')}\equiv
\begin{cases}
q^{\alpha_{B}} \sum_{u\in\mathfrak{S}'_{n-k}} \mathtt{r}^{u}_{w,w'} \wtT_{(A,D,u)}  \pmod{J_{k+1}} & \text{if } B = C, \\
0 \pmod{J_{k+1}}  & \text{if } B \neq C,
\end{cases}
$$
where $T_{w} T_{w'} = \sum_{u \in \mathfrak{S}_{n-k}'} \mathtt{r}^{u}_{w,w'} T_{u}\in\Hta$.
\end{prop}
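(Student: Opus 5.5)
The plan is to start from the factorization recorded just before Proposition \ref{prop:XBCq}:
\[
\wtT_{(A,B,w)}\wtT_{(C,D,w')}=\sigma(T_A^{-1})\,T_w\,X_{B,C}\,T_{w'}\,T_D^{-1}.
\]
This identity uses only that $P_k$ commutes with $T_w$ and $T_{w'}$ for $w,w'\in\Sn'_{n-k}$. The commutation holds because $\Sn'_{n-k}$ is generated by $s_{k+1},\dots,s_{n-1}$, and each such $T_j$ commutes with $P_k$ by \eqref{eq:Pcomm}. Since $J_{k+1}$ is a two-sided ideal, I can replace $X_{B,C}$ by any element congruent to it modulo $J_{k+1}$ without changing the class of the product modulo $J_{k+1}$.

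If $B\neq C$, then Proposition \ref{prop:XBCq} gives $X_{B,C}\in J_{k+1}$. Multiplying on the left by $\sigma(T_A^{-1})T_w$ and on the right by $T_{w'}T_D^{-1}$ keeps the product in $J_{k+1}$, so the product is $\equiv 0$. If $B=C$, then Proposition \ref{prop:XBCq} gives $X_{B,B}\equiv q^{\alpha_B}P_k$. Hence
\[
\wtT_{(A,B,w)}\wtT_{(B,D,w')}\equiv q^{\alpha_B}\,\sigma(T_A^{-1})\,T_wP_kT_{w'}\,T_D^{-1}\pmod{J_{k+1}}.
\]

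Next I move $P_k$ to the left through $T_w$, which gives $q^{\alpha_B}\sigma(T_A^{-1})P_kT_wT_{w'}T_D^{-1}$. I then expand $T_wT_{w'}=\sum_u \mathtt r^u_{w,w'}T_u$ inside the subalgebra $\Hta$ generated by $T_{k+1},\dots,T_{n-1}$; this subalgebra is closed under multiplication and spanned by the $T_u$ with $u\in\Sn'_{n-k}$. Each summand $\sigma(T_A^{-1})P_kT_uT_D^{-1}$ is exactly $\wtT_{(A,D,u)}$ by \eqref{eq:tilde-basis}, which yields the stated formula.

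I expect no serious obstacle, since the heavy lifting is done in Proposition \ref{prop:XBCq}. The one point to check carefully is the index convention. Here $\Sn'_{n-k}$ is the Young subgroup for the composition $(1^k,n-k)$, so its generators $s_j$ have $j\ge k+1$. For those $j$, relation \eqref{eq:Pcomm} applies, since it requires $k<j$. This is what justifies both commuting $P_k$ past $T_w$ and $T_{w'}$, and identifying the coefficients $\mathtt r^u_{w,w'}$ with structure constants of $\Hta$.
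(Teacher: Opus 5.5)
Your proposal is correct and is essentially the paper's proof: factor the product as $\sigma(T_A^{-1})T_wX_{B,C}T_{w'}T_D^{-1}$ using $P_kT_w=T_wP_k$, then apply Proposition~\ref{prop:XBCq} with the fact that $J_{k+1}$ is a two-sided ideal, and expand $T_wT_{w'}$ in $\Hta$ so that each summand is $\wtT_{(A,D,u)}$. Your check that $\Sn'_{n-k}$ is generated by $s_{k+1},\dots,s_{n-1}$, so that \eqref{eq:Pcomm} applies, makes explicit the justification the paper uses without comment.
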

\begin{proof}
By \eqref{eq:X-BC} we have 
\begin{equation}\label{eq:wtT-wtT}
\wtT_{(A,B,w)} \wtT_{(C,D,w')} = \sigma(T_A^{-1}) T_w X_{B,C} T_{w'} T_D^{-1}
\end{equation}
Then by Proposition~\ref{prop:XBCq}  $\wtT_{(A,B,w)} \wtT_{(C,D,w')} \equiv 0 \pmod{J_{k+1}}$ if $B \neq C$ as $J_{k+1}$ is a two-sided ideal. 
If $B = C$, then clearly $P_k$ commutes with $T_w$ and $T_{w'}$, so by \eqref{eq:wtT-wtT} and Proposition~\ref{prop:XBCq} we obtain 
\[
\wtT_{(A,B,w)} \wtT_{(C,D,w')}
\equiv \sigma(T_A^{-1}) T_w (q^{\alpha_B} P_k) T_{w'} T_D^{-1}
= q^{\alpha_B} \sigma(T_A^{-1}) P_k (T_w T_{w'}) T_D^{-1} \pmod{J_{k+1}}.
\]
Writing $T_w T_{w'} = \sum_{\sigma\in\mathfrak{S}'_{n-k}} \gamma^\sigma_{w,w'} T_\sigma$ in $\Hta$, we get
\[
\wtT_{(A,B,w)} \wtT_{(C,D,w')} \equiv q^{\alpha_B} \sum_{\sigma\in\mathfrak{S}'_{n-k}} \gamma^\sigma_{w,w'} \,\wtT_{(A,D,\sigma)} \pmod{J_{k+1}}.
\]
This proves the theorem. 
\end{proof}

Below we derive an important property that assists us in proving the cellular basis.

\begin{prop}\label{prop:colstab}
Let $0\le k\le n$.  For each $x\in\mH$ and $A\in\mathscr{F}^n_k$, there are a family of unique elements $h^{(k)}_{C,A}(x)\in\mathcal{H}'_{n-k,R}(q)$ with $C\in\mathscr{F}^n_k$, such that
\begin{equation}\label{eq:h-definition}
  x\,\sigma(T_A^{-1})P_k
  \equiv
  \sum_{C\in\mathscr{F}^n_k} \sigma(T_C^{-1})\, P_k\, h^{(k)}_{C,A}(x)
  \pmod{J_{k+1}}.
\end{equation}
Moreover 
\begin{equation}\label{eq:h-multiplicative}
  h^{(k)}_{C,A}(y_1y_2)=\sum_{B\in\mathscr{F}^n_k}h^{(k)}_{C,B}(y_1)h^{(k)}_{B,A}(y_2).
\end{equation}
for any $y_1,y_2\in\mH$. 
\end{prop}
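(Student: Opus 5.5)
Existence comes from the basis of $J_k$ in Theorem \ref{thm:Ji-basis}. Since $\sigma(T_A^{-1})P_k=\wtT_{(A,\{1,\dots,k\},\mathrm{id})}\in J_k$ and $J_k$ is a two-sided ideal, $x\,\sigma(T_A^{-1})P_k\in J_k$. By Corollary \ref{cor:basis-Jk-Jk+1} it can be written modulo $J_{k+1}$ as
\[
\sum_{(C,D,u)\in\Gamma_k}\lambda_{C,D,u}\,\wtT_{(C,D,u)}=\sum_{(C,D,u)\in\Gamma_k}\lambda_{C,D,u}\,\sigma(T_C^{-1})P_kT_uT_D^{-1}.
\]
To see that only $D=B_0:=\{1,\dots,k\}$ occurs, I would right-multiply by $P_k$. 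On the left this changes nothing, since $P_k^2=P_k$. On the right,
\[
\wtT_{(C,D,u)}P_k=\wtT_{(C,D,u)}\wtT_{(B_0,B_0,\mathrm{id})},
\]
and Proposition \ref{prop:layer-mult-basis} gives $0$ modulo $J_{k+1}$ when $D\neq B_0$. When $D=B_0$ it gives $q^{\alpha_{B_0}}\wtT_{(C,B_0,u)}=\wtT_{(C,B_0,u)}$. Comparing coefficients in the basis of $J_k/J_{k+1}$ forces $\lambda_{C,D,u}=0$ for $D\neq B_0$. Setting $h^{(k)}_{C,A}(x)=\sum_u\lambda_{C,B_0,u}T_u\in\Hta$ then gives \eqref{eq:h-definition}, because $T_{B_0}=1$.

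Uniqueness follows because the elements $\sigma(T_C^{-1})P_kT_u=\wtT_{(C,B_0,u)}$, with $C\in\mathscr F^n_k$ and $u\in\Sn'_{n-k}$, are part of the basis of $J_k/J_{k+1}$ in Corollary \ref{cor:basis-Jk-Jk+1}. Hence the coefficients of each $h^{(k)}_{C,A}(x)$ in the basis $\{T_u\}$ of $\Hta$ are determined.

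For multiplicativity, apply \eqref{eq:h-definition} to $y_2$ first:
\[
y_1y_2\,\sigma(T_A^{-1})P_k\equiv\sum_B y_1\,\sigma(T_B^{-1})P_k\,h^{(k)}_{B,A}(y_2)\pmod{J_{k+1}}.
\]
Multiplying a congruence mod $J_{k+1}$ on the left by $y_1$ is allowed because $J_{k+1}$ is an ideal. Next apply \eqref{eq:h-definition} to $y_1$ and each $B$, and right-multiply by $h^{(k)}_{B,A}(y_2)$, which is again allowed since $J_{k+1}$ is an ideal. This yields
\[
\sum_C\sigma(T_C^{-1})P_k\sum_B h^{(k)}_{C,B}(y_1)h^{(k)}_{B,A}(y_2).
\]
The inner sum is a product in $\Hta$, hence lies in $\Hta$, so uniqueness gives \eqref{eq:h-multiplicative}.

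The only delicate point is the vanishing of the components with $D\neq B_0$. That reduction rests entirely on Proposition \ref{prop:XBCq} via Proposition \ref{prop:layer-mult-basis}, together with the fact that $P_k$ commutes with $\Hta$.
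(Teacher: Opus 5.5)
Your proposal is correct and follows the paper's proof. You expand $x\,\sigma(T_A^{-1})P_k$ in the basis of $J_k/J_{k+1}$, right-multiply by $P_k=\wtT_{([k],[k],\mathrm{id})}$, and use Proposition~\ref{prop:layer-mult-basis} to discard all terms with second index different from $[k]$; then you read off $h^{(k)}_{C,A}(x)$ and its uniqueness from the linear independence of the $\wtT_{(C,[k],u)}$, and get multiplicativity by applying the defining congruence twice — the only cosmetic difference is that you explicitly note the vanishing of the coefficients $\lambda_{C,D,u}$ for $D\neq[k]$, whereas the paper simply uses the reduced expression.
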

\begin{proof}
Observe that $x\,\sigma(T_A^{-1})P_k\in J_k$. Then we can write 
    \begin{equation}\label{eq:key-lem-for1}
    x\,\sigma(T_A^{-1})P_k \equiv \sum_{C,D\in\mathscr{F}^n_k}\sum_{w\in \Sn'_{n-k}}
  f_{C,D,w}\wtT_{(C,D,w)}\pmod{J_{k+1}}.
  \end{equation}
  for some $f_{C,D,w}\in R$
  by Corollary \ref{cor:basis-Jk-Jk+1}. 
  Note that $P_k=\wtT_{([k],[k],\text{id})}$, where we recall $[k]=\{1<2<\cdots<k\}$. Then  by Proposition \ref{prop:layer-mult-basis} we obtain 
 \begin{equation}\label{eq:key-lem-for2}
 \begin{aligned}
x\,\sigma(T_A^{-1})P_k= x\,\sigma(T_A^{-1})P_k P_k&\equiv \sum_{C,D\in\mathscr{F}^n_k}\sum_{w\in \Sn'_{n-k}}
  f_{C,D,w}\wtT_{(C,D,w)}\wtT_{([k],[k],\text{id})}\\
  & \equiv
  \sum_{C,w}f_{C,[k],w}\wtT_{(C,[k],w)}\pmod{J_{k+1}}. 
  \end{aligned}
  \end{equation}
Set $h^{(k)}_{C,A}(x)=\sum_{w\in\mathfrak{S}'_{n-k}}f_{C,[k],w}T_w\in\mathcal{H}'_{n-k,R}(q)$, while the uniqueness $h^{(k)}_{C,A}(x)$ follows from the linear independence of the elements $\wtT_{(C,[k],w)}$ with $C\in\mathscr{F}^n_k$ and $w\in \Sn'_{n-k}$ in $J_k/J_{k+1}$.

Suppose $y_1,y_2\in\mH$. By \eqref{eq:h-definition}, we have 
$
y_2\,\sigma(T_A^{-1})P_k
  \equiv
  \sum_{C\in\mathscr{F}^n_k} \sigma(T_C^{-1})\, P_k\, h^{(k)}_{C,A}(y_2)
  \pmod{J_{k+1}}
$
and hence 
$$
\begin{aligned}
y_1y_2\,\sigma(T_A^{-1})P_k
  \equiv
  \sum_{B\in\mathscr{F}^n_k}y_1 \sigma(T_B^{-1})\, P_k\, h^{(k)}_{B,A}(y_2)
  \pmod{J_{k+1}}\\
   \equiv
  \sum_{C,B\in\mathscr{F}^n_k} \sigma(T_C^{-1})\, P_kh^{(k)}_{C,B}(y_1)\, h^{(k)}_{B,A}(y_2)  \pmod{J_{k+1}}
  \end{aligned}
$$
Meanwhile applying \eqref{eq:h-definition} to the elements $x=y_1y_2$, the uniqueness of $h^{(k)}_{C,A}(y_1y_2)$ gives rise to \eqref{eq:h-multiplicative}. This proves the lemma. 
\end{proof}

\subsection{Cellular bases for $\mH$}

First, we shall recall the definition of cellular algebra in the sense of \cite{GL96}.
\begin{definition}\cite[Definition 1.1]{GL96}\label{defn:GL96}
Let $R$ be a commutative ring with identity. An associative $R$-algebra $A$ is called a cellular algebra if it possesses a \textit{cell datum} $(\Lambda, M, C, *)$, where:
\begin{enumerate}
    \item $\Lambda$ is a finite partially ordered set (poset) with order $\prec$, and for each $\lambda \in \Lambda$, $M(\lambda)$ is a finite index set.
    \item $C$ is an injective map 
    $$\coprod_{\lambda \in \Lambda} M(\lambda) \times M(\lambda) \longrightarrow A$$
    whose image forms an $R$-basis of $A$. The basis element corresponding to a pair $(\mathfrak{s}, \mathfrak{t}) \in M(\lambda) \times M(\lambda)$ is denoted by $C_{\mathfrak{s}, \mathfrak{t}}^\lambda$.
    \item $*$ is an $R$-linear anti-automorphism of $A$ (i.e., $(ab)^* = b^* a^*$) such that 
    $$(C_{\mathfrak{s}, \mathfrak{t}}^\lambda)^* = C_{\mathfrak{t}, \mathfrak{s}}^\lambda$$
    for all $\lambda \in \Lambda$ and $\mathfrak{s}, \mathfrak{t} \in M(\lambda)$.
    \item For any $a \in A$ and $\lambda \in \Lambda, \mathfrak{s}, \mathfrak{t} \in M(\lambda)$, the multiplication satisfies the following upper-triangular property:
    $$a\cdot C_{\mathfrak{s}, \mathfrak{t}}^\lambda  \equiv \sum_{\mathfrak{v} \in M(\lambda)} r_a(\mathfrak{s}, \mathfrak{v}) C_{\mathfrak{v}, \mathfrak{t}}^\lambda \pmod{A_{\succ \lambda}},$$
    where the scalar coefficient $r_a(\mathfrak{s}, \mathfrak{v}) \in R$ depends only on $a$, $\mathfrak{s}$, and $\mathfrak{v}$ (independent of $\mathfrak{t}$). The space $A_{\succ\lambda}$ is the $R$-submodule of $A$ spanned by the set:
    $$\{ C_{\mathfrak{u}, \mathfrak{v}}^\mu \mid \mu \succ \lambda \text{ and } \mathfrak{u}, \mathfrak{v} \in M(\mu) \}.$$
\end{enumerate}
\end{definition}

Fix $0\le k\le n$. For notation simplicity we write $\mathscr{H}_n:=\mH$ and 
\begin{equation}\label{eq:tail}
\Hta:=\mathcal{H}'_{n-k,R}(q)=\langle T_{k+1},T_{k+2},\dots,T_{n-1}\rangle\subset\mathcal{H}_{n,R}(q)\subset\mH. 
\end{equation} Clearly$\mathcal H'_0=R$, and $\Hta$  is isomorphic to the Hecke algebra $\mathcal{H}_{n-k,R}(q)$. 
It is known that $\mathcal{H}'_{n-k}$ is a cellular algebra \cite[Example 1.2]{GL96}. 
Assume $(\Lambda_k, M_k,\{c^\lambda_{\ks,\kt}\},\sigma_k)$ is a cell datum for each $\mathcal{H}'_{n-k}$ with $0\leq k\leq n$ as described in Definition \ref{defn:GL96} such that
 \begin{equation}\label{eq:two-anti}
\sigma_k=\sigma |_{\mathcal{H}'_{n-k}}, 
\end{equation}
that is, the restriction of the anti-involutation $\sigma$ to the Hecke subalgebras  $\mathcal{H}'_{n-k}$ coincides with the anti-involution $\sigma$. Here $c^\lambda_{\mathfrak{s},\mathfrak{t}}$ for $(\mathfrak{s},\mathfrak{t})\in M_k(\lambda)\times M_k(\lambda)$ and $\lambda\in\Lambda_k$ is the corresponding cellular basis.  Clearly both Murphy basis and  the Kazhdan-Lusztig basis are such a basis for Hecke algebras, cf. \cite[Example 1.2]{GL96} or \cite{Ma99}.  Denote by the partial order in $\Lambda_k$ by $\prec_k$. 

Moreover by Definition \ref{defn:GL96}  we have 
\begin{equation}\label{eq:Hecke-involution}
\sigma_k(c^{\lambda}_{{\ks},{\kt}})=c^{\lambda}_{{\kt},{\ks}}.
\end{equation}
Write $(\Hta)^{\succ_k\lambda}$ for the $R$-submodule of $\Hta$ spanned by the cellular basis elements $c^{\mu}_{{\ku},{\kv}}$ with $\mu\succ_k\lambda$.

\begin{definition}\label{def:cellular-basis}
Set
$$\Lambda=\{(k,\lambda):0\le k\le n,\;\lambda\in \Lambda_k\}.$$
Define a partial order in $\Lambda$ via 
\begin{equation}\label{eq:poset}
  (k,\lambda)\prec (l,\mu)
  \quad\Longleftrightarrow\quad
  k<l\quad\text{or}\quad k=l\text{ and }\lambda\prec_k\mu.
\end{equation}
For $(k,\lambda)\in\Lambda$, set
$$\mathcal{M}(k,\lambda):=\binnk\times M_k(\lambda).$$
For $(A,\ks),(B,\kt)\in\mathcal{M}(k,\lambda)$, define
\begin{equation}\label{eq:C-basis}
  C^{(k,\lambda)}_{(A,\ks),(B,\kt)}
  :=\sigma(T_A^{-1})P_k\,c^{\lambda}_{\ks,\kt}\,T_B^{-1}.
\end{equation}
\end{definition}

\begin{lem}\label{lem:new-basis}
The set $\{C^{(k,\lambda)}_{(A,\ks),(B,\kt)}\mid (k,\lambda)\in\Lambda, (A,\ks),(B,\kt)\in \mathcal{M}(k,\lambda)\}$ is a R-basis for $\mH$. 
\end{lem}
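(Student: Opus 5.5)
The plan is to compare the new set with the basis $\{\wtT_{(A,B,w)}\mid (A,B,w)\in\Gamma\}$ of Proposition \ref{pro:basis} via a block-unitriangular (in fact block-diagonal) change of basis. Fix $0\le k\le n$ and $A,B\in\binnk$. Since $\{c^\lambda_{\ks,\kt}\mid \lambda\in\Lambda_k,\ \ks,\kt\in M_k(\lambda)\}$ is an $R$-basis of $\Hta$, and $\{T_w\mid w\in\Sn'_{n-k}\}$ is also an $R$-basis of $\Hta$, there is an invertible matrix $(g_{w}^{\lambda,\ks,\kt})$ over $R$ with
\[
c^\lambda_{\ks,\kt}=\sum_{w\in\Sn'_{n-k}} g_{w}^{\lambda,\ks,\kt}\,T_w ,
\]
whose inverse expresses each $T_w$ as an $R$-combination of the $c^\lambda_{\ks,\kt}$. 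Both index sets have the same cardinality $|\Sn_{n-k}|=(n-k)!$.

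Substituting into \eqref{eq:C-basis} and using linearity of $x\mapsto \sigma(T_A^{-1})P_k\,x\,T_B^{-1}$ on $\Hta$, we obtain
\[
C^{(k,\lambda)}_{(A,\ks),(B,\kt)}=\sum_{w\in\Sn'_{n-k}} g_{w}^{\lambda,\ks,\kt}\,\wtT_{(A,B,w)},
\]
using \eqref{eq:tilde-basis}. Conversely, each $\wtT_{(A,B,w)}$ is an $R$-combination of the $C^{(k,\lambda)}_{(A,\ks),(B,\kt)}$ with the same $k,A,B$, via the inverse matrix. Hence for each triple $(k,A,B)$ the $R$-span of $\{C^{(k,\lambda)}_{(A,\ks),(B,\kt)}\}_{\lambda,\ks,\kt}$ equals the $R$-span of $\{\wtT_{(A,B,w)}\}_{w}$, and the transition matrix is invertible.

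Summing over all $(k,A,B)$, the total transition matrix between the two families is block diagonal with invertible blocks. Since $\{\wtT_{(A,B,w)}\}$ is an $R$-basis of $\mH$ by Proposition \ref{pro:basis}, the family $\{C^{(k,\lambda)}_{(A,\ks),(B,\kt)}\}$ is also an $R$-basis.

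The only point requiring care is the indexing, namely that $(k,\lambda,(A,\ks),(B,\kt))$ ranges bijectively over the blocks $(k,A,B)$ times the cellular index set of $\Hta$, which is immediate from $\mathcal{M}(k,\lambda)=\binnk\times M_k(\lambda)$. The same argument restricted to $k\ge i$ combined with Theorem \ref{thm:Ji-basis} also yields bases of each $J_i$, which will be useful later; no genuine obstacle is expected here.
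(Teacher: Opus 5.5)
Your proof is correct and follows essentially the same route as the paper: both reduce to the basis $\{\wtT_{(A,B,w)}\}$ of Proposition \ref{pro:basis} through the change of basis between $\{T_w\}$ and $\{c^\lambda_{\ks,\kt}\}$ inside $\Hta$. The paper only shows spanning and then concludes by counting cardinalities. Your block-diagonal invertible transition matrix gives linear independence directly, which avoids the implicit fact (true over a commutative ring) that a spanning set of the right size in a finitely generated free module is a basis.
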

\begin{proof}
By \eqref{eq:tilde-basis} and the fact that $\{c^\lambda_{\ks,\kt}\mid \lambda\in\Lambda_k, \ks,\kt\in M_i(\lambda)\}$ is a basis for $\mathcal{H}'_{n-i}$ for each $0\leq k\leq n$, we obtain that $\wtT_{(A,B,w)}$ for each $(A,B,w)\in\Gamma_k$ is a linear combination of elements $C^{(i,\mu)}_{\ku,\kv}$ with $\mu\in\Lambda_k, \ku,\kv\in M_k(\lambda)$. Therefore by Proposition \ref{pro:basis}, $\mH$ is spanned by the set $\{C^{(k,\lambda)}_{(A,\ks),(B,\kt)}\mid (k,\lambda)\in\Lambda, (A,\ks),(B,\kt)\in \mathcal{M}(k,\lambda)\}$. 
Then the lemma is verified by a cardinality comparison by Proposition \ref{pro:basis}. 
\end{proof}
Let $\mathscr{H}_n^{\succ(k,\lambda)}$ denote the $R$-span of all basis elements
$C^{(l,\mu)}_{(A,\ku),(B,\kv)}$ with $(l,
\mu)\succ(k,\lambda)$ for each $(k,\lambda)\in\Lambda$. 

\begin{lem} 
For $0\leq k\leq n$, we have 
\begin{equation}\label{eq:dominance-inclusion}
J_{k+1}\subseteq\mathscr{H}_n^{\succ(k,\lambda)},\quad   \sigma(T_A^{-1})P_k\, y\, T_B^{-1}\in \mathscr{H}^{\succ(k,\lambda)}, \quad \text{ for any } y\in (\Hta)^{\succ\lambda}. 
\end{equation}
\end{lem}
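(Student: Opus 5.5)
The plan is to prove the two inclusions in \eqref{eq:dominance-inclusion} separately. Both come down to the basis results for $J_{k+1}$ and for $\mH$ already established.

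\emph{First inclusion.} By Theorem \ref{thm:Ji-basis}, $J_{k+1}$ is spanned by the elements $\wtT_{(A,B,w)}$ with $(A,B,w)\in\Gamma_l$ and $l\ge k+1$. For such $l$, write $T_w\in\mathcal{H}'_{n-l}$ as an $R$-linear combination $T_w=\sum_{\mu,\ku,\kv} r_{\mu,\ku,\kv}\,c^{\mu}_{\ku,\kv}$ of the cellular basis of $\mathcal{H}'_{n-l}$. Multiplying on the left by $\sigma(T_A^{-1})P_l$ and on the right by $T_B^{-1}$ gives
\[
\wtT_{(A,B,w)}=\sum_{\mu,\ku,\kv} r_{\mu,\ku,\kv}\, C^{(l,\mu)}_{(A,\ku),(B,\kv)}.
\]
Each term is indexed by some $(l,\mu)$ with $l>k$. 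By \eqref{eq:poset} this means $(l,\mu)\succ(k,\lambda)$ for every $\mu\in\Lambda_l$, whatever $\lambda\in\Lambda_k$ is. Hence every spanning element of $J_{k+1}$ lies in $\mathscr{H}_n^{\succ(k,\lambda)}$, and so $J_{k+1}\subseteq \mathscr{H}_n^{\succ(k,\lambda)}$.

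\emph{Second inclusion.} Here $(\Hta)^{\succ\lambda}$ is, by definition, the $R$-span of the elements $c^\mu_{\ku,\kv}$ with $\mu\succ_k\lambda$. The map $y\mapsto \sigma(T_A^{-1})P_k\,y\,T_B^{-1}$ is $R$-linear, so it is enough to take $y=c^{\mu}_{\ku,\kv}$ with $\mu\succ_k\lambda$. In that case \eqref{eq:C-basis} gives $\sigma(T_A^{-1})P_k\,c^{\mu}_{\ku,\kv}\,T_B^{-1}=C^{(k,\mu)}_{(A,\ku),(B,\kv)}$ exactly. This is a basis element with index $(k,\mu)\succ(k,\lambda)$, by the second clause of \eqref{eq:poset}, so it lies in $\mathscr{H}_n^{\succ(k,\lambda)}$.

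No step is a real obstacle. The only point needing care is the first inclusion, where one must use the fact that the elements $C^{(l,\mu)}$ re-express the spanning set of $J_{k+1}$ within layers $l\ge k+1$ only. That is exactly what Theorem \ref{thm:Ji-basis}, combined with the change of basis from $T_w$ to $c^\mu_{\ku,\kv}$ inside each $\mathcal{H}'_{n-l}$ (as in the proof of Lemma \ref{lem:new-basis}), provides.
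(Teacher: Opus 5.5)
Your proposal is correct and follows essentially the same route as the paper. For the first inclusion you use Theorem \ref{thm:Ji-basis} together with the change of basis from $T_w$ to the cellular basis $c^\mu_{\ku,\kv}$ in each layer $l\ge k+1$. For the second you identify $\sigma(T_A^{-1})P_k\,c^{\mu}_{\ku,\kv}\,T_B^{-1}$ directly with $C^{(k,\mu)}_{(A,\ku),(B,\kv)}$, where $\mu\succ_k\lambda$, and conclude via the order \eqref{eq:poset}.
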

\begin{proof}
Fix $0\leq k\leq n$. 
By Theorem \ref{thm:Ji-basis}, it suffices to prover $\wtT_{(A,B,w)}\in \mathscr{H}_n^{\succ(k,\lambda)}$ for each $(A,B,w)\in \cup_{i=k+1}^n\Gamma_i$ to verify $J_{k+1}\subseteq\mathscr{H}_n^{\succ(k,\lambda)}$.  
As in the proof of Lemma \ref{lem:new-basis}  the element $\wtT_{(A,B,w)}$ for each $(A,B,w)\in\Gamma_i$ is a linear combination of elements $C^{(i,\mu)}_{\ku,\kv}$ with $\mu\in\Lambda_i, \ku,\kv\in M_i(\mu)$. Meanwhile $C^{(i,\mu)}_{\ks,\kt}\in \mathscr{H}_n^{\succ(k,\lambda)}$ for each $i\geq k+1$ due to \eqref{eq:poset}.  This proves $J_{k+1}\subseteq\mathscr{H}_n^{\succ(k,\lambda)}$.  

Take $y\in (\Hta)^{\succ\lambda}$. Then $y$ is a linear combination of cellular basis $c^{\mu}_{{\ku},{\kv}}$ with $\mu\succ_k\lambda$ by the definition of $ (\Hta)^{\succ\lambda}$. This means $\sigma(T_A^{-1})P_k\, y\, T_B^{-1}$ is a linear combination of 
$\sigma(T_A^{-1})P_k\,c^{\mu}_{{\ku},{\kv}}\, T_B^{-1}=C^{(k,\mu)}_{\ku,\kv}$ with $\mu\succ_k\lambda$. Then by \eqref{eq:poset} we obtain $\sigma(T_A^{-1})P_k\, y\, T_B^{-1}\in \mathscr{H}^{\succ(k,\lambda)}$.  This proves the lemma. 

\end{proof}

\begin{thm}\label{thm:cellularity}
The quadruple
$$\bigl(\Lambda,\mathcal{M},\{C^{(k,\lambda)}_{(A,\ks),(B,\kt)}\},\sigma\bigr)$$
is a Graham--Lehrer cellular datum for $\mH$.  In particular, $\mH$ is cellular.
\end{thm}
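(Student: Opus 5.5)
We verify the three axioms of Definition~\ref{defn:GL96} in turn. Axioms (1) and (2) are already in hand: $\Lambda$ with the order \eqref{eq:poset} is a finite poset, each $\mathcal{M}(k,\lambda)$ is finite, and Lemma~\ref{lem:new-basis} shows that the elements $C^{(k,\lambda)}_{(A,\ks),(B,\kt)}$ form an $R$-basis. Injectivity of the index map follows from the cardinality comparison in that lemma.

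\emph{Axiom (3).} Since $\sigma$ is an anti-involution fixing each $T_i$ and $P_k$, we get $\sigma(\sigma(T_A^{-1}))=T_A^{-1}$ and $\sigma(T_B^{-1})$ reversed into the left factor. Using \eqref{eq:two-anti}, \eqref{eq:Hecke-involution}, and the fact that $P_k$ commutes with $\Hta$,
\[
\sigma\bigl(C^{(k,\lambda)}_{(A,\ks),(B,\kt)}\bigr)=\sigma(T_B^{-1})\,\sigma_k(c^\lambda_{\ks,\kt})\,P_k\,T_A^{-1}=\sigma(T_B^{-1})P_k\,c^\lambda_{\kt,\ks}\,T_A^{-1}=C^{(k,\lambda)}_{(B,\kt),(A,\ks)}.
\]

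\emph{Axiom (4).} This is the main step. Fix $x\in\mH$. Since $c^\lambda_{\ks,\kt}\in\Hta$ commutes with $P_k$, we write
\[
x\,C^{(k,\lambda)}_{(A,\ks),(B,\kt)}=x\,\sigma(T_A^{-1})P_k\,c^\lambda_{\ks,\kt}T_B^{-1}.
\]
Applying Proposition~\ref{prop:colstab} and the fact that $J_{k+1}$ is a two-sided ideal gives
\[
x\,C^{(k,\lambda)}_{(A,\ks),(B,\kt)}\equiv\sum_{C\in\binnk}\sigma(T_C^{-1})P_k\,h^{(k)}_{C,A}(x)\,c^\lambda_{\ks,\kt}\,T_B^{-1}\pmod{J_{k+1}}.
\]
Next we use the cellularity of $\Hta$:
\[
h^{(k)}_{C,A}(x)\,c^\lambda_{\ks,\kt}\equiv\sum_{\kv\in M_k(\lambda)}r_{h^{(k)}_{C,A}(x)}(\ks,\kv)\,c^\lambda_{\kv,\kt}\pmod{(\Hta)^{\succ\lambda}},
\]
where the coefficients do not depend on $\kt$. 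By \eqref{eq:dominance-inclusion}, both the error term $J_{k+1}$ and the terms $\sigma(T_C^{-1})P_k\,y\,T_B^{-1}$ with $y\in(\Hta)^{\succ\lambda}$ lie in $\mathscr{H}_n^{\succ(k,\lambda)}$. Therefore
\[
x\,C^{(k,\lambda)}_{(A,\ks),(B,\kt)}\equiv\sum_{(C,\kv)\in\mathcal{M}(k,\lambda)}r_{h^{(k)}_{C,A}(x)}(\ks,\kv)\,C^{(k,\lambda)}_{(C,\kv),(B,\kt)}\pmod{\mathscr{H}_n^{\succ(k,\lambda)}}.
\]
Setting $r_x((A,\ks),(C,\kv)):=r_{h^{(k)}_{C,A}(x)}(\ks,\kv)$, this coefficient depends only on $x$, $(A,\ks)$ and $(C,\kv)$, and not on $(B,\kt)$. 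The uniqueness of the $h^{(k)}_{C,A}(x)$ in Proposition~\ref{prop:colstab} ensures it is well defined.

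The only delicate point is confirming that the coefficients are independent of $(B,\kt)$. This is exactly what Proposition~\ref{prop:colstab} provides: $h^{(k)}_{C,A}(x)$ depends only on $x$, $A$ and $C$. We do not need the multiplicativity \eqref{eq:h-multiplicative} here, since Definition~\ref{defn:GL96} imposes no condition on the coefficients beyond this independence.
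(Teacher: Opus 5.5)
Your proposal is correct and follows essentially the same route as the paper: axiom (3) comes from $\sigma|_{\Hta}=\sigma_k$ together with the fact that $P_k$ commutes with $\Hta$, and axiom (4) comes from Proposition~\ref{prop:colstab}, the cellular triangularity in $\Hta$, and the inclusions \eqref{eq:dominance-inclusion}. Your remark that the multiplicativity \eqref{eq:h-multiplicative} is not needed is also accurate, since the paper's proof does not use it either.
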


\begin{proof}
   By Lemma \ref{lem:new-basis}, it remains to show that the quadruple
$\bigl(\Lambda,\mathcal{M},\{C^{(k,\lambda)}_{(A,\ks),(B,\kt)}\},\sigma\bigr)$ satisfies (3) and (4) in Definition \ref{defn:GL96}. 

 Observe that  
  Since $P_k$ commutes with $\Hta$ due to \eqref{eq:Pcomm} and \eqref{eq:Pminus}, we have
        $$
        \sigma\bigl(C^{(k,\lambda)}_{(A,\ks),(B,\kt)}\bigr)=\sigma(T_B^{-1})\sigma(c^{\lambda}_{\ks,\kt})\,P_k\,T_A^{-1}
  =\sigma(T_B^{-1})P_k\, c^{\lambda}_{\kt, \ks}T_A^{-1}
  =C^{(k,\lambda)}_{(B,\kt),(A,\ks)}
  $$
 using \eqref{eq:Hecke-involution} and \eqref{eq:two-anti} as well as \eqref{eq:C-basis}.   So part (3) in Definition \ref{defn:GL96} holds for the quadruple
$\bigl(\Lambda,\mathcal{M},\{C^{(k,\lambda)}_{(A,\ks),(B,\kt)}\},\sigma\bigr)$. 

  It remains to check the cellular triangularity axiom.  Let $x\in\mH$.  By Proposition \ref{prop:colstab} and the fact $J_{k+1}$ is a two-sided ideal, we have 
  \begin{equation}\label{eq:basis-triangularity-1}
  x\,C^{(k,\lambda)}_{(A,\ks),(B,\kt)}=x\,\sigma(T_A^{-1})P_k\,c^{\lambda}_{\ks,\kt}T_B^{-1}
  \equiv \sum_{C\in\binnk}\sigma(T_C^{-1})P_k\,h^{(k)}_{C,A}(x)\,c^{\lambda}_{\ks,\kt}T_B^{-1}
  \pmod{J_{k+1}}.
  \end{equation}
  As $h^{(k)}_{C,A}(x)\in \mathcal{H}'_{n-k}$, then by choice of the cell datum $(\Lambda_k, M_k, \{c^\lambda_{\ks,\kt}\},\sigma_k)$ for $\mathcal{H}'_{n-k}$ we have the following 
  \begin{equation}\label{eq:Hecke-basis-triangularity}
  h^{(k)}_{C,A}(x)\, c^{\lambda}_{\ks,\kt}
  \equiv
  \sum_{\ku\in M_k(\lambda)}\mathtt{r}_x(C,\ku;A,\ks)\, c^{\lambda}_{\ku, \kt}
  \pmod{(\Hta)^{\succ\lambda}}.
\end{equation}
for some $\mathtt{r}_x(C,\ku;A,\ks)\in R$.  
Putting \eqref{eq:basis-triangularity-1}, \eqref{eq:Hecke-basis-triangularity} and \eqref{eq:dominance-inclusion} together one can get 
 \begin{equation}\label{eq:mirabolic-basis-triangularity}
x\, C^{(k,\lambda)}_{(A,\ks),(B,\kt)}
  \equiv
  \sum_{C\in\binnk}\sum_{\ku\in\Std(\lambda)}
  \mathtt{r}_x(C,\ku;A,\ks)
  C^{(k,\lambda)}_{(C,\ku),(B,\kt)}
  \pmod{\mathscr{H}_n^{\succ(k,\lambda)}}.
\end{equation}
This proves that part (4)  in Definition \ref{defn:GL96} holds for the quadruple
$\bigl(\Lambda,\mathcal{M},\{C^{(k,\lambda)}_{(A,\ks),(B,\kt)}\},\sigma\bigr)$. 
Putting together, the theorem is proved. 
\end{proof}

\begin{rmk}
In \cite{Ro14}, Rosso introduced a set of elements by identifying the mirabolic Hecke algebra over $\mathbb{C}(q)$ with a quotient of a certain cyclotomic Hecke algebra. He conjectured that these elements form a cellular basis and, consequently, that the mirabolic Hecke algebra over $\mathbb{C}(q)$ is cellular; see \cite[Conjecture 7.5]{Ro14}. Theorem \ref{thm:cellularity} establishes the cellularity assertion of this conjecture. However, whether the proposed elements form a cellular basis remains open. It would therefore be interesting to compare them with the cellular bases constructed in Theorem \ref{thm:cellularity}.
\end{rmk}
%%%
\begin{rmk}\label{rmk:rook-correction}
König and Xi \cite{KX99} introduced the notion of an iterated inflation of cellular algebras, and Xi \cite[Lemma 3.3]{Xi99} subsequently proposed a precise characterization of such inflations. The calculations in Propositions \ref{prop:XBCq} and \ref{prop:layer-mult-basis} parallel those in \cite{Pa06}, where Paget verified that the $q$-rook monoid algebras satisfy the conditions of \cite[Lemma 3.3]{Xi99} and used this to establish their cellularity. Similarly, Propositions \ref{prop:XBCq} and \ref{prop:layer-mult-basis} show that $\mH$ satisfies those conditions. However, Green and Paget \cite{GP18} later gave a counterexample to \cite[Lemma 3.3]{Xi99} and provided a corrected statement in \cite[Theorem 1]{GP18}. Verifying the conditions of the corrected theorem for either the $q$-rook monoid algebras or $\mH$ appears to be nontrivial. We instead establish the multiplication formula in Proposition \ref{prop:colstab}, which is a key ingredient in our proof of Theorem \ref{thm:cellularity}. We expect an analogous formula to hold for the $q$-rook monoid algebras.

The calculations in Propositions \ref{prop:XBCq} and \ref{prop:layer-mult-basis} also suggest that various formulas for mirabolic Hecke algebras are related to their counterparts for $q$-rook monoid algebras by interchanging the parameters $q$ and $-1$. On the Hecke algebra $\mathcal{H}_n(q)$, which occurs as a subalgebra of both $\mH$ and $\mathscr{R}_{n,R}(q)$, this interchange is realized by the Mullineux involution
$
T_i\longmapsto -qT_i^{-1},
\qquad 1\leq i\leq n-1.
 $
It is therefore natural to ask whether this involution extends to an isomorphism between the mirabolic Hecke algebra and a $q$-rook monoid algebra. Inspired by \cite{JL09}, we construct an explicit isomorphism over $R$, generalizing the isomorphism established over $\mathbb{C}(q)$ in \cite[Proposition 5.2]{Ro14}.

More precisely, the $q$-rook monoid algebra $\mathscr{R}_{n,R}(q^{-1})$ is the associative $R$-algebra generated by $\widetilde{P}_1,\ldots,\widetilde{P}_n$ and $\widetilde{T}_1,\ldots,\widetilde{T}_{n-1}$, subject to the relations in \cite[(2.1), Corollary 2.1]{Ha04}. A direct verification of the defining relations shows that the assignment
\begin{equation}
\begin{aligned}
\tau_R:\mH &\longrightarrow \mathscr{R}_{n,R}(q^{-1}),\\
P_j &\longmapsto \widetilde{P}_j,
&\quad T_i &\longmapsto -q\widetilde{T}_i,
\end{aligned}
\end{equation}
for $1\leq j\leq n$ and $1\leq i\leq n-1$, defines an isomorphism. Its inverse is given by
\[
\tau_R^{-1}(\widetilde{P}_j)=P_j,
\qquad
\tau_R^{-1}(\widetilde{T}_i)=-q^{-1}T_i.
\]
Transporting the cellular bases from Theorem \ref{thm:cellularity} along this isomorphism yields cellular bases for the $q$-rook monoid algebras. This gives a direct proof of their cellularity that avoids reliance on \cite[Lemma 3.3]{Xi99}, in light of the correction in \cite{GP18}.

We expect the cellular bases constructed in Theorem \ref{thm:cellularity} to be useful in our study of Kazhdan--Lusztig-type canonical bases for both $q$-rook monoid algebras and mirabolic Hecke algebras, which we will pursue in \cite{LW26}.
\end{rmk}

\subsection{A Moita equivalence}

In this subsection, we assume $R=\mathbb{K}$ is a field with invertible $q\in\mathbb{K}$. 
   For $0\leq k\leq n$, let $V_k$ denote a $\mathbb{K}$-vector space with basis $\{v_A\mid A\in\mathscr{F}^n_k\}$ labelled by $\mathscr{F}^n_k$. 
   Write kronecker delta as 
\begin{equation}\label{eq:Phi}
	\delta_{B,C}=
	\begin{cases}
		1& \text{if } B = C, \\
		0 & \text{if } B \neq C.
	\end{cases}
\end{equation}
for any $B,C\in\mathscr{F}^n_k$. 
Define a multiplication on the $\mathbb{K}$-vector space $V_k\otimes_{\mathbb{K}} V_k\otimes_{\mathbb{K}}\mathcal{H}'_{n-k,\mathbb{K}}(q)$ via 
\begin{equation}\label{eq:multiplication-tensor}
	\left(v_{A} \otimes v_{B} \otimes T_{w}\right)\left(v_{C} \otimes v_{D} \otimes T_{w'}\right) =\delta_{B,C}q^{\alpha_B}\sum_{u\in\mathfrak{S}'_{n-k}}\mathtt{r}^u_{w,w'}  v_{A} \otimes v_{D} \otimes T_u
\end{equation}
for $A,B,C,D\in\mathscr{F}^n_k$ and $w,w'\in\mathfrak{S}_{n-k}$. 
%%%%%%

\begin{lem} \label{lem:matrix-iso}
For $0\leq k\leq n$,  we have the following algebra isomorphism:
	$$
	V_{k} \otimes V_{k} \otimes \mathcal{H}'_{n-k,\mathbb{K}}(q)\cong\Mat_{\binom{n}{k}}(\mathcal{H}'_{n-k,\mathbb{K}}(q)).
	$$
\end{lem}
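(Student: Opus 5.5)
We define an explicit map and rescale the matrix units to absorb the scalars $q^{\alpha_B}$. Let $\{e_{A,D}\mid A,D\in\mathscr{F}^n_k\}$ be the matrix units of $\Mat_{\binom{n}{k}}(\mathbb{K})$, so that $\Mat_{\binom{n}{k}}(\mathcal{H}'_{n-k,\mathbb{K}}(q))$ has $\mathbb{K}$-basis $\{e_{A,D}\,T_u\}$, with the $e_{A,D}$ commuting with $\mathcal{H}'_{n-k}$ and $e_{A,B}e_{C,D}=\delta_{B,C}e_{A,D}$. We define the $\mathbb{K}$-linear map
\[
\Phi: V_k\otimes V_k\otimes \mathcal{H}'_{n-k,\mathbb{K}}(q)\longrightarrow \Mat_{\binom{n}{k}}(\mathcal{H}'_{n-k,\mathbb{K}}(q)),\qquad v_A\otimes v_B\otimes T_w\mapsto q^{\beta_A}q^{\gamma_B}\, e_{A,B}\,T_w ,
\]
where $\beta_A,\gamma_B$ are integers to be chosen. $\Phi$ sends a basis bijectively onto a basis up to the invertible scalars $q^{\beta_A+\gamma_B}$ (here we use that $q$ is invertible), so it is a linear isomorphism.

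For multiplicativity, we extend \eqref{eq:multiplication-tensor} bilinearly, so that it reads $(v_A\otimes v_B\otimes h)(v_C\otimes v_D\otimes h')=\delta_{B,C}q^{\alpha_B}v_A\otimes v_D\otimes hh'$ for $h,h'\in\mathcal{H}'_{n-k}$. This is because $\sum_u \mathtt{r}^u_{w,w'}T_u=T_wT_{w'}$ by definition. We then compare the two products. The product $\Phi(v_A\otimes v_B\otimes T_w)\Phi(v_C\otimes v_D\otimes T_{w'})$ equals $\delta_{B,C}q^{\beta_A+\gamma_B+\beta_B+\gamma_D}e_{A,D}T_wT_{w'}$. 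On the other hand, $\Phi$ of the product equals $\delta_{B,C}q^{\alpha_B}q^{\beta_A+\gamma_D}e_{A,D}T_wT_{w'}$. Equality holds exactly when $\gamma_B+\beta_B=\alpha_B$ for all $B$. The simplest choice is $\beta_A=\alpha_A$ and $\gamma_B=0$, i.e. $\Phi(v_A\otimes v_B\otimes T_w)=q^{\alpha_A}e_{A,B}T_w$.

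We also check that \eqref{eq:multiplication-tensor} really is an associative unital algebra structure, but this follows from the argument itself. Since $\Phi$ is a linear bijection intertwining the product \eqref{eq:multiplication-tensor} with matrix multiplication, associativity and the unit are transported from the matrix algebra. The unit is $\sum_A q^{-\alpha_A}v_A\otimes v_A\otimes 1$.

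There is no real obstacle. The only points needing care are choosing the diagonal rescaling to absorb $q^{\alpha_B}$, and noting that the structure constants $\mathtt{r}^u_{w,w'}$ are exactly those of $\mathcal{H}'_{n-k}$, so that the product formula is linear in the Hecke factor.
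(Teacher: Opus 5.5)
Your proof is correct and is essentially the paper's argument: the paper uses the rescaled matrix-unit map $v_A\otimes v_B\otimes h\mapsto E_{A,B}(q^{\alpha_B}h)$, which in your notation is the choice $\beta_A=0$, $\gamma_B=\alpha_B$, whereas you take $\beta_A=\alpha_A$, $\gamma_B=0$; both satisfy your constraint $\beta_B+\gamma_B=\alpha_B$. Your explicit multiplicativity check, together with the remark that associativity and the unit $\sum_A q^{-\alpha_A}v_A\otimes v_A\otimes 1$ are transported along the linear bijection, supplies the details the paper calls ``straightforward''.
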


\begin{proof}
As $|\mathscr{F}^n_k|=\binom{n}{k}$, we can parametrize the rows and columns of the matrices in $\Mat_{\binom{n}{k}}(\mathcal{H}'_{n-k,\mathbb{K}}(q))$ by the elements $A\in\Gamma_k$. For each $h\in \mathcal{H}'_{n-k,\mathbb{K}}(q)$, denote by 
$E_{A,B}(x)$ the matrix whose $(A,B)$-entry is $x$ and other entry are $0$. 
Then it is straightforward using the multiplication \eqref{eq:multiplication-tensor}  to prove that the map $v_A\otimes v_B\otimes h\mapsto E_{A,B}(q^{\alpha_B}h)$ for each $h\in\mathcal{H}'_{n-k,\mathbb{K}}(q)$ gives an algebra isomorphism
 from $V_{k} \otimes V_{k} \otimes \mathcal{H}'_{n-k,\mathbb{K}}(q)$ to $\Mat_{\binom{n}{k}}(\mathcal{H}'_{n-k,\mathbb{K}}(q)).$
\end{proof}
%%%%%
\begin{lem}\label{lem:psi-iso} 
For each $0\leq k\leq n$, the map 
	$$
	\psi_k: J_{k} / J_{k+1} \rightarrow V_{k} \otimes V_{k} \otimes \mathcal{H}'_{n-k,\mathbb{K}}(q),\quad \wtT_{(A,B,w)}+J_{k+1} \mapsto v_{A} \otimes v_{B} \otimes T_{w}
	$$
	is an algebra isomorphism. \end{lem}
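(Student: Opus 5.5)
The plan is to verify directly that $\psi_k$ is a well-defined $\mathbb{K}$-linear bijection, and then that it is multiplicative on basis elements. Here $J_k/J_{k+1}$ is regarded as a (possibly non-unital) algebra, with multiplication induced from $\mH$; this is legitimate because $J_{k+1}$ is a two-sided ideal of $\mH$ contained in $J_k$.

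\textbf{Bijectivity.} By Corollary \ref{cor:basis-Jk-Jk+1}, the cosets $\wtT_{(A,B,w)}+J_{k+1}$ with $(A,B,w)\in\Gamma_k$ form a $\mathbb{K}$-basis of $J_k/J_{k+1}$. Hence there is a unique linear map $\psi_k$ with the prescribed values on this basis. On the other side, the elements $v_A\otimes v_B\otimes T_w$ with $A,B\in\mathscr{F}^n_k$ and $w\in\Sn'_{n-k}$ form a basis of $V_k\otimes V_k\otimes\mathcal{H}'_{n-k,\mathbb{K}}(q)$. This uses the fact that $\{T_w\mid w\in\Sn'_{n-k}\}$ is a basis of $\Hta\cong\mathcal{H}_{n-k,\mathbb{K}}(q)$. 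Since $\psi_k$ sends one basis bijectively onto the other (the indexing set is $\Gamma_k$ in both cases), it is a linear isomorphism.

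\textbf{Multiplicativity.} By bilinearity of both products, it suffices to check multiplicativity on pairs of basis elements. Take $(A,B,w),(C,D,w')\in\Gamma_k$. Proposition \ref{prop:layer-mult-basis} gives
\[
\bigl(\wtT_{(A,B,w)}+J_{k+1}\bigr)\bigl(\wtT_{(C,D,w')}+J_{k+1}\bigr)=\delta_{B,C}\,q^{\alpha_B}\sum_{u\in\Sn'_{n-k}}\mathtt{r}^u_{w,w'}\,\wtT_{(A,D,u)}+J_{k+1},
\]
where $T_wT_{w'}=\sum_u \mathtt{r}^u_{w,w'}T_u$ in $\Hta$. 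Applying $\psi_k$ to the right-hand side yields
\[
\delta_{B,C}\,q^{\alpha_B}\sum_u\mathtt{r}^u_{w,w'}\,v_A\otimes v_D\otimes T_u .
\]
By \eqref{eq:multiplication-tensor}, this is exactly $(v_A\otimes v_B\otimes T_w)(v_C\otimes v_D\otimes T_{w'})$. Thus $\psi_k$ preserves products.

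\textbf{Remaining points.} Three small checks are needed to complete the argument.
\begin{itemize}
\item Since $\psi_k$ is a bijective multiplicative linear map, the multiplication \eqref{eq:multiplication-tensor} is automatically associative, being transported from $J_k/J_{k+1}$. So no separate check is required that the target is an algebra. Alternatively, associativity follows from Lemma \ref{lem:matrix-iso}.
\item If unitality is wanted, the identity of the target is $\sum_{A}q^{-\alpha_A}v_A\otimes v_A\otimes 1$, and it corresponds under $\psi_k$ to the identity of the quotient.
\item The only genuine input is Proposition \ref{prop:layer-mult-basis}. The one point needing care is that its structure constants $\mathtt{r}^u_{w,w'}$ are the same ones used in \eqref{eq:multiplication-tensor}; this holds because both are defined by expanding $T_wT_{w'}$ in $\Hta$ with $w,w'\in\Sn'_{n-k}$.
\end{itemize}
Consequently, I expect no real obstacle beyond this bookkeeping.
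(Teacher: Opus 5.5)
Your proposal is correct and follows the paper's approach: the paper's proof just says the lemma follows directly from Proposition~\ref{prop:layer-mult-basis} and leaves the details to the reader. You supply those details, getting bijectivity from the basis in Corollary~\ref{cor:basis-Jk-Jk+1} and multiplicativity from Proposition~\ref{prop:layer-mult-basis} compared against \eqref{eq:multiplication-tensor}.
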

\begin{proof} 
	It  follows directly from  Proposition~\ref{prop:layer-mult-basis} and we leave the details to the reader. 
\end{proof}

\begin{lem}\cite[Lemma~7.1]{KX01} \label{lem:ring-splitting}
Suppose $A$ is a ring and $I$ a 2-sided ideal in $A$. If there is a unit element in $I$, then  $A\cong I\oplus (A/I)$ as rings.  
\end{lem}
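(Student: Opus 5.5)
The plan is to split $A$ along the central idempotent coming from the unit of $I$. Let $e\in I$ denote the unit element of the ring $I$, so that $ex=xe=x$ for all $x\in I$. The proof proceeds in three steps.

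\emph{Step 1: $e$ is a central idempotent of $A$.} Since $e\in I$, we have $e^2=e$. Take any $a\in A$. Because $I$ is a two-sided ideal, both $ae$ and $ea$ lie in $I$. Since $e$ is the identity of $I$,
\[
ae=(ae)e=a e,\qquad ae=e(ae)=eae,
\]
and similarly $ea=(ea)e=eae$. Hence $ae=eae=ea$, so $e$ is central in $A$. I expect this to be the only step needing any thought; the key observation is that $ae$ and $ea$ are both elements of $I$, on which $e$ acts as the identity from both sides.

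\emph{Step 2: the Peirce decomposition.} Set $f=1-e$. Then $f$ is also a central idempotent, and $ef=fe=0$. Every $a\in A$ decomposes as $a=ae+af$, so $A=Ae\oplus Af$, and this is a direct sum of two-sided ideals. The product of the two summands vanishes, since $(ae)(bf)=abef=0$. Therefore the map
\[
A\longrightarrow Ae\times Af,\qquad a\mapsto (ae,af),
\]
is a ring isomorphism, where $Ae$ has identity $e$ and $Af$ has identity $f$.

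\emph{Step 3: identifying the summands.} First, $Ae=I$: the inclusion $Ae\subseteq I$ holds because $I$ is an ideal containing $e$, and conversely each $x\in I$ satisfies $x=xe\in Ae$. Next, the projection $A\to A/I$ restricted to $Af$ is surjective because $a\equiv af \pmod I$. It is injective because $Af\cap Ae=0$. It is multiplicative, and it sends $f$ to $1+I$. Hence $Af\cong A/I$ as rings. Combining this with Step 2 gives $A\cong I\oplus (A/I)$.
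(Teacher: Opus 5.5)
Your argument is correct. The unit $e$ of $I$ is a central idempotent of $A$, so $A=Ae\oplus A(1-e)$ with $Ae=I$ and $A(1-e)\cong A/I$; this is the standard proof of \cite[Lemma~7.1]{KX01}, which the paper quotes without giving a proof of its own. Using $f=1-e$ assumes $A$ has an identity. That holds in the paper's application, where $A$ is a quotient of $\mHK$, and without it the map $a\mapsto (ae,\,a+I)$ still gives the isomorphism.
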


Let $R=\mathbb{K}$ be a field and $q\in\mathbb{K}$ is a primitive $e$-th root of unity, that is $e$ is the smallest positive integer such $1+q+\cdots+q^{e-1}=0$ and set $e=\infty$ if no such positive number. Denote by $\mathcal{P}(n)$ the set of partitions of $n$ for each $n\geq 0$. A partition $\lambda=(\lambda_1\geq \lambda_2\geq \ldots)\in\mathcal{P}(n)$ is said to be $e$-restricted if $\lambda_i-\lambda_{i+1}<e$.  
Denote by $\mathcal{RP}_e(n)$ the set of $e$-restricted partitions for each $n\geq 0$. 
Then, analogously to the proof in \cite[Theorem 14 \& Corollary 15]{Pa06} we have the following result.

\begin{thm}\label{thm:Morita_equi_MH}
There exists an algebra isomorphism 
	$$\mathscr{H}_{n,\mathbb{K}}(q)\cong \oplus_{k=0}^n\Mat_{\binom{n}{k}}(\mathcal{H}'_{n-k,\mathbb{K}}(q))$$ as an algebra. Moreover, the mirabolic Hecke algebra $\mathscr{H}_{n,\mathbb{K}}(q)$ is Morita equivalent to 
	$$
	\oplus_{k=0}^n\mathcal{H}_{k,\mathbb{K}}(q),
	$$
	where we write $\mathcal{H}_{0,\mathbb{K}}(q)=\mathbb{K}$. 
\end{thm}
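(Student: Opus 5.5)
We argue by downward induction along the chain $0=J_{n+1}\subset J_n\subset\cdots\subset J_0=\mHK$, peeling off one layer at a time with Lemma \ref{lem:ring-splitting}. The key observation is that, over a field, each subquotient $J_k/J_{k+1}$ is a unital algebra. By Lemma \ref{lem:psi-iso} and Lemma \ref{lem:matrix-iso}, $J_k/J_{k+1}\cong \Mat_{\binom{n}{k}}(\mathcal{H}'_{n-k,\mathbb{K}}(q))$, and the latter has identity $\sum_A E_{A,A}(1)$. Pulling back through the isomorphisms, the identity of $J_k/J_{k+1}$ is the image of
\[
e_k:=\sum_{A\in\mathscr{F}^n_k} q^{-\alpha_A}\,\wtT_{(A,A,\mathrm{id})}.
\]
One can check directly from Proposition \ref{prop:layer-mult-basis} that $e_k\,\wtT_{(C,D,w)}\equiv \wtT_{(C,D,w)}\equiv \wtT_{(C,D,w)}e_k$ modulo $J_{k+1}$. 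Here the factor $q^{-\alpha_C}q^{\alpha_C}$ cancels in the product $\wtT_{(C,C,\mathrm{id})}\wtT_{(C,D,w)}$, and on the right we use $q^{\alpha_D}$ together with $T_wT_{\mathrm{id}}=T_w$.

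Now run the induction on $m=n,n-1,\dots,0$, with the claim
\[
J_m\cong \oplus_{k=m}^n J_k/J_{k+1}
\]
as algebras. The base case is trivial since $J_n=J_n/J_{n+1}$. For the inductive step, view $J_{m+1}$ as a two-sided ideal in the ring $J_m$. By induction $J_{m+1}$ is isomorphic to a finite direct sum of unital algebras, so $J_{m+1}$ has an identity element $\epsilon$. We must ensure that $\epsilon$ is a unit for $J_{m+1}$ as an ideal, i.e. $\epsilon x=x\epsilon=x$ for $x\in J_{m+1}$; this holds by definition of the ring $J_{m+1}$. Lemma \ref{lem:ring-splitting}, whose proof uses only that $\epsilon$ is central idempotent acting as identity on $I$, then gives $J_m\cong J_{m+1}\oplus J_m/J_{m+1}$.

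This centrality is the step I expect to be the main obstacle, and it is exactly what \cite[Lemma 7.1]{KX01} handles. The point is that for $a\in J_m$ we have $a\epsilon,\epsilon a\in J_{m+1}$, so
\[
\epsilon a=(\epsilon a)\epsilon=\epsilon(a\epsilon)=a\epsilon .
\]
Hence $\epsilon$ is central in $J_m$, and $J_m=\epsilon J_m\oplus(1-\epsilon)J_m$ makes sense: we interpret $(1-\epsilon)x=x-\epsilon x$ even without a unit in $J_m$, noting that at $m=0$ the unit exists anyway. The summand $(1-\epsilon)J_m$ is isomorphic to $J_m/J_{m+1}$. Combining the case $m=0$ with Lemmas \ref{lem:psi-iso} and \ref{lem:matrix-iso} yields the first isomorphism.

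For the Morita statement, we use that $\Mat_N(B)$ is Morita equivalent to $B$ for any ring $B$, and that Morita equivalence is compatible with finite direct sums of algebras. Finally, $\mathcal{H}'_{n-k,\mathbb{K}}(q)\cong\mathcal{H}_{n-k,\mathbb{K}}(q)$, with $\mathcal{H}'_0=\mathbb{K}$. Reindexing $k\mapsto n-k$ gives $\oplus_{k=0}^n\mathcal{H}_{k,\mathbb{K}}(q)$.
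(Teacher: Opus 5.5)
Your proof is correct and is essentially the paper's argument. You use the same element $e_k=\sum_{A\in\mathscr{F}^n_k}q^{-\alpha_A}\wtT_{(A,A,\mathrm{id})}+J_{k+1}$, check via Proposition~\ref{prop:layer-mult-basis} that it is the identity of $J_k/J_{k+1}$, and assemble the decomposition with Lemmas~\ref{lem:psi-iso}, \ref{lem:matrix-iso} and \ref{lem:ring-splitting}.

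The only difference is bookkeeping. You iterate the splitting through the a priori non-unital ideals $J_{m+1}\subset J_m$, so you need the centrality of the identity of $J_{m+1}$ in $J_m$, and you supply it correctly. The natural reading of the paper's reduction (``it suffices to find a unit in $J_k/J_{k+1}$'') applies Lemma~\ref{lem:ring-splitting} to the unital rings $\mHK/J_{k+1}$ with ideal $J_k/J_{k+1}$, which avoids that check.
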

\begin{proof}
By Lemmas \eqref{lem:matrix-iso}-\eqref{lem:psi-iso}, it suffices to prove that there exists a unit in $J_k/J_{k+1}$ for each $0\leq k\leq n$. Let $e_k=\sum_{A\in\mathscr{F}^n_k}q^{-\alpha_A}\wtT_{(A,A,\text{id})}+J_{k+1}\in J_k/J_{k+1}$. Then 
by Proposition \ref{prop:layer-mult-basis} one can easily show $e_k(\wtT_{(B,C,w)}+J_{k+1})=(\wtT_{(B,C,w)}+J_{k+1})e_k=\wtT_{(B,C,w)}+J_{k+1}$ for each $(B,C,w)\in\Gamma_k$. Hence $e_k$ is a unit in $J_k/J_{k+1}$. This proves the theorem. 
\end{proof}

\begin{cor}\label{cor:semisimplicity}
The mirabolic Hecke algebra $\mathscr{H}_{n,\mathbb{K}}(q)$ is semisimple if and only if $e>n$. Moreover, the isomorphism classes of irreducible $\mathscr{H}_{n,\mathbb{K}}(q)$-modules are parametrized by
\[
\{(k,\lambda)\mid 0\leq k\leq n,\ \lambda\in\mathcal{RP}_e(n-k)\}.
\]
\end{cor}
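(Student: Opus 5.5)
The plan is to reduce everything to the Hecke algebras of symmetric groups via Theorem \ref{thm:Morita_equi_MH}. That theorem gives an algebra isomorphism
\[
\mathscr{H}_{n,\mathbb{K}}(q)\cong \oplus_{k=0}^n\Mat_{\binom{n}{k}}(\mathcal{H}'_{n-k,\mathbb{K}}(q)),
\]
and each $\mathcal{H}'_{n-k,\mathbb{K}}(q)$ is isomorphic to $\mathcal{H}_{n-k,\mathbb{K}}(q)$. Both assertions of the corollary are Morita invariant. Semisimplicity is preserved under finite direct sums and under passage to matrix algebras, in both directions. Likewise, the simple modules of $\Mat_m(H)$ are in bijection with those of $H$ via $L\mapsto H^{\oplus m}\otimes$-type constructions, i.e.\ column vectors $L^{m}$. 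Simple modules of a finite direct sum of algebras are the disjoint union of the simple modules of the summands. Therefore the simple $\mathscr{H}_{n,\mathbb{K}}(q)$-modules are parametrized by pairs $(k,L)$ with $0\le k\le n$ and $L$ a simple $\mathcal{H}_{n-k,\mathbb{K}}(q)$-module. Moreover, $\mathscr{H}_{n,\mathbb{K}}(q)$ is semisimple iff every $\mathcal{H}_{r,\mathbb{K}}(q)$ with $0\le r\le n$ is semisimple.

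Next I will invoke the classical results for Hecke algebras of type A, due to Dipper--James and recorded e.g.\ in \cite{Ma99}. The first is that the simple $\mathcal{H}_{r,\mathbb{K}}(q)$-modules $D^\lambda$ (in the restricted labelling coming from the Murphy cellular basis) are indexed by $\lambda\in\mathcal{RP}_e(r)$. The second is the semisimplicity criterion of Dipper--James: $\mathcal{H}_{r,\mathbb{K}}(q)$ is semisimple iff $e>r$, which is equivalent to $[r]_q!\neq 0$. Combining these with the first step immediately gives the parametrization $\{(k,\lambda)\mid 0\le k\le n,\ \lambda\in\mathcal{RP}_e(n-k)\}$.

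For semisimplicity, observe that the condition "$e>r$ for all $0\le r\le n$" is equivalent to $e>n$, since the case $r=n$ (which occurs for $k=0$) is the strongest. The small cases $r=0,1$ give $\mathbb{K}$, which is trivially semisimple. This is consistent with the criterion, since $e\ge 2>1$ always under the convention that $e$ is the least $e$ with $1+q+\dots+q^{e-1}=0$; note $e=1$ is impossible.

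The only point needing care is matching conventions: $e$ as defined here (the quantum characteristic, allowing $q=1$ with $e=\operatorname{char}\mathbb{K}$) and $e$-restricted partitions as labels must agree with those in the cited Hecke algebra classification. I expect this to be the main, if mild, obstacle. I would state the Dipper--James theorem in exactly this form, with $e$ the least integer with $[e]_q=0$, including the case $q=1$ where it reduces to the modular symmetric group. The labels $\mathcal{RP}_e$ are then taken from the Murphy basis via \cite{GL96}, which also matches the cell-module approach later in the paper.
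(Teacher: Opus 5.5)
Your argument is correct and follows the route the paper intends: the paper gives no separate proof of the corollary and treats it, as the remark after it says, as a direct consequence of the isomorphism $\mathscr{H}_{n,\mathbb{K}}(q)\cong\oplus_{k=0}^n\Mat_{\binom{n}{k}}(\mathcal{H}'_{n-k,\mathbb{K}}(q))$ in Theorem~\ref{thm:Morita_equi_MH}, combined with the standard results for type~A Hecke algebras (semisimple iff $e>r$, simple modules labelled by $\mathcal{RP}_e(r)$). Your reduction of the condition ``$e>r$ for all $0\le r\le n$'' to $e>n$, and your treatment of the trivial cases $r=0,1$, supply exactly the details the paper leaves implicit.
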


\begin{rmk}
By Theorem \ref{thm:Morita_equi_MH}, the representation theory of $\mHK$ reduces to that of the usual Hecke algebras. In addition to the semisimplicity criterion and the parametrization of irreducible modules in Corollary \ref{cor:semisimplicity}, known results for Hecke algebras also determine the decomposition numbers and block decomposition of $\mHK$.
\end{rmk}

%%%%%%%%%%%%%%%%%

\section{Coincidence between cell modules and Siegel's modules over $\mHK$}\label{sec:cell}

In this section, we study the cell modules and show the coincidence of these modules with Siegel's irreducible modules over $\mHK$ in 
the case $\mathbb{K}=\mathbb{C}(q)$. 

\subsection{cell modules of cellular algebra}
We will show some properties of cellular algebra in this section.

\begin{definition}\label{defn:usual-cell-module}
Let $A$ be a cellular algebra over a field $K$ with cellular basis $\{C_{{\ks},{\kt}}^\lambda \mid \lambda \in \Lambda, \ {\ks}, {\kt} \in M(\lambda)\}$. For each $\lambda \in \Lambda$, the \emph{cell module} (or standard module), denoted by $W(\lambda)$, is defined as a left $A$-module with a $K$-basis $\{C_{\ks} \mid {\ks} \in M(\lambda)\}$. The left action of $a \in A$ on $W(\lambda)$ is given by
$$a \cdot C_{\ks} = \sum_{{\kv} \in M(\lambda)} r_a({\kv}, {\ks}) C_{\kv},$$
where the scalars $r_a({\kv},{\ks}) \in K$ are uniquely determined by the cellular algebra multiplication rule $a C_{{\ks},{\kt}}^\lambda \equiv \sum_{{\kv} \in M(\lambda)} r_a({\kv}, {\ks}) C_{{\kv},{\kt}}^\lambda \pmod{A_{\succ\lambda}}$.
\end{definition}

\begin{definition}\label{defn:canonical form}
Let $A$ be a cellular algebra over a field $K$ with cellular basis $\{C_{{\ks},{\kt}}^\lambda \mid \lambda \in \Lambda, \ {\ks},{\kt} \in M(\lambda)\}$. For any $\lambda \in \Lambda$ and ${\ku},{\kv} \in M(\lambda)$, the cellular algebra axioms ensure that the multiplication of basis elements satisfies:
$$C_{{\ks},{\ku}}^\lambda C_{{\kv},{\kt}}^\lambda \equiv \phi_\lambda({\ku},{\kv}) C_{{\ks},{\kt}}^\lambda \pmod{A_{\succ\lambda}}.$$
The scalar $\phi_\lambda({\ku},{\kv}) \in K$ depends exclusively on the inner indices $\ku$ and $\kv$, and is independent of the outer indices $\ks$ and $\kt$.

The \emph{canonical bilinear form} associated with $\lambda$ is defined as the unique $K$-bilinear map $\phi_\lambda : W(\lambda) \times W(\lambda) \to K$ given on the basis elements of the cell module $W(\lambda)$ by:
$$\phi_\lambda(C_{\ku}, C_{\kv}) = \phi_\lambda({\ku},{\kv}).$$

\end{definition}

\begin{lem}\cite[Theorem 3.4]{GL96}\label{lem:radical-cell}
Let $A$ be a cellular algebra over a field $K$ and $W(\lambda)$ be the cell module associated with $\lambda \in \Lambda$. Let $\phi_\lambda : W(\lambda) \times W(\lambda) \to K$ be the canonical associative symmetric bilinear form on $W(\lambda)$. Define the radical of $\phi_\lambda$ as
$$\mathrm{rad}(\lambda) = \{ x \in W(\lambda) \mid \phi_\lambda(x, y) = 0 \text{ for all } y \in W(\lambda) \},$$
which is an $A$-submodule of $W(\lambda)$. Let $\Lambda_0 = \{ \lambda \in \Lambda \mid \phi_\lambda \neq 0 \}$ and define the quotient module $L(\lambda) = W(\lambda) / \mathrm{rad}(\lambda)$. Then:
\begin{enumerate}
    \item[(i)] For each $\lambda \in \Lambda_0$, $L(\lambda)$ is an absolutely irreducible $A$-module.
    \item[(ii)] The set $\{ L(\lambda) \mid \lambda \in \Lambda_0 \}$ constitutes a complete set of pairwise non-isomorphic simple $A$-modules.
\end{enumerate}
\end{lem}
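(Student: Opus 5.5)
The plan is to follow the standard Graham--Lehrer argument \cite[\S 2--3]{GL96}. It is driven entirely by the cellular axioms of Definition \ref{defn:GL96}, together with the bilinear form of Definition \ref{defn:canonical form}.

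\emph{Step 1: the form is well defined, associative and symmetric.} Apply axiom (4) to $a=C^\lambda_{\ks,\ku}$ acting on $C^\lambda_{\kv,\kt}$. This shows $C^\lambda_{\ks,\ku}C^\lambda_{\kv,\kt}$ is a combination of $C^\lambda_{\kw,\kt}$ modulo $A_{\succ\lambda}$, with coefficients independent of $\kt$. Applying $*$ and axiom (3) gives the mirror statement, with coefficients independent of $\ks$. Comparing the two forces the product to be $\phi_\lambda(\ku,\kv)C^\lambda_{\ks,\kt}$. Applying $*$ once more yields the symmetry $\phi_\lambda(\ku,\kv)=\phi_\lambda(\kv,\ku)$. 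For associativity, $\phi_\lambda(a^*x,y)=\phi_\lambda(x,ay)$, expand $C^\lambda_{\ks,\ku}\,a\,C^\lambda_{\kv,\kt}$ in two ways: group it as $(C_{\ks,\ku}a)C_{\kv,\kt}$, using $C_{\ks,\ku}a=(a^*C_{\ku,\ks})^*$, and as $C_{\ks,\ku}(aC_{\kv,\kt})$. Compare coefficients of $C_{\ks,\kt}$. Associativity immediately makes $\mathrm{rad}(\lambda)$ a submodule.

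\emph{Step 2: $L(\lambda)$ is absolutely irreducible for $\lambda\in\Lambda_0$.} The key identity is $C^\lambda_{\ks,\ku}\cdot x=\phi_\lambda(C_\ku,x)C_\ks$ for $x\in W(\lambda)$, which follows from Step 1. Choose $x\notin\mathrm{rad}(\lambda)$. Then some $\ku$ has $\phi_\lambda(C_\ku,x)\neq0$, so $Ax$ contains every $C_\ks$, i.e.\ $Ax=W(\lambda)$. Hence every proper submodule lies in $\mathrm{rad}(\lambda)$, and $L(\lambda)$ is simple. The same identity persists after any field extension, which gives absolute irreducibility. Equivalently, $\mathrm{End}_A(L(\lambda))=K$: an endomorphism is determined by where it sends the image of a fixed $C_\ks$ with $\phi\neq0$, and it commutes with the rank-one operators above.

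\emph{Step 3: distinctness and completeness.} \textbf{This is the main obstacle.} For distinctness, suppose $\theta\colon L(\lambda)\to L(\mu)$ is a nonzero homomorphism. Since $C^\lambda_{\ks,\ku}$ acts nontrivially on $L(\lambda)$, it must act nontrivially on $L(\mu)$. Acting on $L(\mu)$ kills $A_{\succ\mu}$, so some element of index $\not\succ\mu$ acts nontrivially, forcing $\lambda\not\succ\mu$ in the appropriate sense. By symmetry we get $\lambda=\mu$. I would make this precise by noting that the ideal $A_{\succeq\mu}$ annihilates $W(\mu)/\ldots$ appropriately. For completeness, take a simple module $L$ and choose $\lambda$ maximal such that some $C^\lambda_{\ks,\kt}$ acts nontrivially on $L$. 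Then $A_{\succ\lambda}L=0$, and there is $y\in L$ with $C^\lambda_{\ks,\kt}y\neq0$. The map $W(\lambda)\to L$, $C_{\kv}\mapsto C^\lambda_{\kv,\kt}y$, is an $A$-homomorphism by axiom (4) and is nonzero, hence surjective. To see that it factors through $L(\lambda)$ and that $\lambda\in\Lambda_0$, use $C^\lambda_{\ks,\kt}C^\lambda_{\ku,\kt}y\neq0$ for a suitable choice of indices. If instead all $\phi_\lambda=0$, then $(A_{\succeq\lambda})^2\subseteq A_{\succ\lambda}$, which contradicts the nontriviality on the simple module $L$. Any kernel element outside $\mathrm{rad}(\lambda)$ would generate all of $W(\lambda)$ by Step 2, so the kernel equals $\mathrm{rad}(\lambda)$ and $L\cong L(\lambda)$.
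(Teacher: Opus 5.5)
Your Steps 1 and 2 and the completeness half of Step 3 are sound and follow the standard Graham--Lehrer argument; the paper itself gives no proof and only cites \cite{GL96}. One small point: you should record explicitly that $A_{\succ\lambda}$ and $A_{\succeq\lambda}$ are two-sided ideals. This follows from axiom (4), transitivity of $\prec$ and $*$-stability, and you use it in the associativity computation and in the $(A_{\succeq\lambda})^2\subseteq A_{\succ\lambda}$ argument.

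\textbf{The gap is in distinctness.} From $L(\lambda)\cong L(\mu)$ you correctly get that some $C^\lambda_{\ks,\ku}$ acts nonzero on $L(\mu)$. Hence $C^\lambda_{\ks,\ku}\notin A_{\succ\mu}$, i.e.\ $\lambda\not\succ\mu$, and symmetrically $\mu\not\succ\lambda$. In a partial order this does not give $\lambda=\mu$, because $\lambda$ and $\mu$ may be incomparable; this happens for the dominance order used later in the paper. Your remark that $A_{\succeq\mu}$ annihilates a quotient of $W(\mu)$ does not close the gap. What annihilates $W(\mu)$ is $A_{\succ\mu}$, and that is exactly the information you have already used.

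\textbf{The missing idea is to use both one-sided ideal properties of a product simultaneously.}
\begin{enumerate}
\item Choose $\ku,\kv\in M(\lambda)$ with $\phi_\lambda(\ku,\kv)\neq 0$. Your rank-one identity gives $C^\lambda_{\ku,\ku}C_\kv=\phi_\lambda(\ku,\kv)C_\ku\notin\mathrm{rad}(\lambda)$. So $C^\lambda_{\ku,\ku}$ acts nonzero on $L(\lambda)\cong L(\mu)$, hence on $W(\mu)$.
\item Therefore $C^\lambda_{\ku,\ku}C^\mu_{\ks,\kt}\notin A_{\succ\mu}$ for some $\ks,\kt\in M(\mu)$.
\item This product lies in $A_{\succeq\lambda}$, which is a right ideal, and in $A_{\succeq\mu}$, which is a left ideal. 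Both are spans of subsets of one basis, so their intersection is spanned by the $C^\nu$ with $\nu\succeq\lambda$ and $\nu\succeq\mu$.
\item Not lying in $A_{\succ\mu}$ means the product has a nonzero coefficient on some $C^\mu$. That basis element must then satisfy $\mu\succeq\lambda$.
\item Interchanging the roles of $\lambda$ and $\mu$ gives $\lambda\succeq\mu$, hence $\lambda=\mu$.
\end{enumerate}

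Alternatively, replace $\prec$ by a linear extension. The congruences in axiom (4) stay valid, with the same coefficients, modulo the larger span. So the cell modules, the forms and $\Lambda_0$ are unchanged. In a total order, your conclusion ``$\lambda\not\succ\mu$ and $\mu\not\succ\lambda$'' does force $\lambda=\mu$.

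With either repair, your proposal becomes a complete proof.
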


\subsection{Cell modules}

{\color{black} In the following, we shall construct the cell modules for $\mHK$. First, the cell datum for the Hecke algebra $\mathcal{H}_d(q)$ for each $d\geq 0$, we choose the Murphy cellular basis. More precisely, for each $0\leq k\leq n$, 
the Hecke subalgebra $\mathcal{H}'_{n-k}$ admits a cell datum $(\Lambda_k,M_k,\{m^{\lambda}_{\ks,\kt}\},\sigma_k)$, where $\Lambda_k=\mathcal{P}(n-k)$ is the set of all partitions of $n-k$ and $M_k(\lambda)=\text{Std}(\lambda)$ consists of all standard Young tableaux of shape $\lambda$ and $m^{\lambda}_{\ks,\kt}$ is the Murphy basis, see \cite{Ma99} for details. 
Meanwhile, the preorder $\prec_k$ in this situation is the dominance order, that is, $\lambda\prec_k\mu$ for $\lambda=(\lambda_1,\lambda_2,\ldots),\mu=(\mu_1,\mu_2,\ldots)\in \Lambda_k$ if and only if $\lambda\neq \mu, \sum^i_{j=1}\lambda_j\leq \sum^i_{j=1}\mu_j$ for each $i\geq 1$.  
Then by Theorem \ref{thm:cellularity} we obtain that the set 
$$
\{{C}^{(k,\lambda)}_{(A,\ks),(B,\kt)}=\sigma(T^{-1}_A)P_k\, m^{\lambda}_{\ks,\kt} T_{B}^{-1}\mid 0\leq k\leq n, \lambda\in\Lambda_k, (A,\ks), (B,\kt)\in \mathcal{M}(k,\lambda)\}
$$
is a cellular basis for $\mHK$.

For $(k,\lambda)\in\Lambda$, let $\cell^{(n)}_{(k,\lambda)}$ be the cell module in the sense of Definition \ref{defn:usual-cell-module} associated with the cellular datum in Theorem \ref{thm:cellularity}.  It admits a basis parametrized by $\mathcal{M}(k,\lambda)$: 
$$\{c_{A,\ks}: A\in\binnk,\ \ks\in\Std(\lambda)\},$$
and the action of $\mathscr{H}_{n,\mathbb{K}}(q)$ is determined by the coefficients in the cellular triangularity relation given in \eqref{eq:mirabolic-basis-triangularity}:
\begin{equation}\label{eq:x-action-cell}
x\cdot c_{A,\ks}=
  \sum_{B\in\binnk}\sum_{\ku\in\Std(\lambda)}r_x(B,\ku;A,\ks)c_{B,\ku}.
  \end{equation}
  In addition, write $\langle,\rangle^{(n)}_{(k,\lambda)}$ in the sense of Definition \ref{defn:canonical form} associated with the cellular datum in Theorem \ref{thm:cellularity}. 
 For each $0\leq k\leq $ and $\lambda\in\Lambda_k=\mathcal{P}(n-k)$, let $\cell_\lambda$ denote the cell module associated to $\lambda$ for the Hecke algebra $\mathcal{H}'_{n-k,\mathbb{K}}(q)$, with basis $\{c_\ks:\ks\in\Std(\lambda)\}$. 
 Write $\langle,\rangle_{\lambda}$ the corresponding canonical bilinear form on $\cell_\lambda$ in the sense of Definition \ref{defn:canonical form} associated to the cell datum for Murphy cellular basis as above. 
 
 \begin{prop}\label{prop:gram-form}
Suppose $(k,\lambda)\in\Lambda$. Then for any $(A,\ks), (B,\kt)\in \mathcal{M}(k,\lambda)$,  we  have 
\begin{equation}\label{eq:gram-diagonal}
  \langle c_{A,\ks},c_{B,\kt}\rangle^{(n)}_{(k,\lambda)}
  =\delta_{A,B}q^{\alpha_A}\langle c_\ks,c_\kt\rangle_\lambda.
\end{equation}
\end{prop}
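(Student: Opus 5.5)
The plan is to compute the product of two cellular basis elements of $\mH$ modulo $\mathscr{H}_n^{\succ(k,\lambda)}$ and read off the scalar required by Definition \ref{defn:canonical form}. Fix $(k,\lambda)\in\Lambda$ and take $(A,\ks),(B,\kt),(D,\kv),(E,\ku)\in\mathcal{M}(k,\lambda)$. By Definition \ref{defn:canonical form}, $\langle c_{A,\ks},c_{B,\kt}\rangle^{(n)}_{(k,\lambda)}$ is the scalar $\phi$ with
\[
C^{(k,\lambda)}_{(D,\kv),(A,\ks)}\,C^{(k,\lambda)}_{(B,\kt),(E,\ku)}\equiv \phi\, C^{(k,\lambda)}_{(D,\kv),(E,\ku)} \pmod{\mathscr{H}_n^{\succ(k,\lambda)}}.
\]
So it suffices to expand the left-hand side.

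By \eqref{eq:C-basis}, the left-hand side equals
\[
\sigma(T_D^{-1})P_k\,m^\lambda_{\kv,\ks}\,T_A^{-1}\sigma(T_B^{-1})P_k\,m^\lambda_{\kt,\ku}\,T_E^{-1}.
\]
Since $P_k$ commutes with $\Hta$ by \eqref{eq:Pcomm} and \eqref{eq:Pminus}, we can move $m^\lambda_{\kv,\ks}$ to the left of $P_k$. The expression then becomes $\sigma(T_D^{-1})\,m^\lambda_{\kv,\ks}\,X_{A,B}\,m^\lambda_{\kt,\ku}\,T_E^{-1}$, where $X_{A,B}$ is as in \eqref{eq:X-BC}. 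Proposition \ref{prop:XBCq} gives $X_{A,B}\equiv\delta_{A,B}q^{\alpha_A}P_k \pmod{J_{k+1}}$. Because $J_{k+1}$ is a two-sided ideal, the whole product is congruent to
\[
\delta_{A,B}\,q^{\alpha_A}\,\sigma(T_D^{-1})P_k\,\bigl(m^\lambda_{\kv,\ks}m^\lambda_{\kt,\ku}\bigr)T_E^{-1}\pmod{J_{k+1}}.
\]
Equivalently, one could cite Proposition \ref{prop:layer-mult-basis} after expanding the Murphy elements in the $T_w$ basis.

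Next we apply cellularity of $\Hta$ with the Murphy datum:
\[
m^\lambda_{\kv,\ks}m^\lambda_{\kt,\ku}\equiv\langle c_\ks,c_\kt\rangle_\lambda\, m^\lambda_{\kv,\ku}\pmod{(\Hta)^{\succ\lambda}}.
\]
Substitute this into the displayed congruence. By \eqref{eq:dominance-inclusion}, the error term $\sigma(T_D^{-1})P_k\,y\,T_E^{-1}$ with $y\in(\Hta)^{\succ\lambda}$ lies in $\mathscr{H}_n^{\succ(k,\lambda)}$, and $J_{k+1}\subseteq\mathscr{H}_n^{\succ(k,\lambda)}$ as well. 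We therefore get
\[
C^{(k,\lambda)}_{(D,\kv),(A,\ks)}C^{(k,\lambda)}_{(B,\kt),(E,\ku)}\equiv\delta_{A,B}\,q^{\alpha_A}\langle c_\ks,c_\kt\rangle_\lambda\, C^{(k,\lambda)}_{(D,\kv),(E,\ku)}\pmod{\mathscr{H}_n^{\succ(k,\lambda)}}.
\]
The basis elements $C^{(k,\lambda)}_{(D,\kv),(E,\ku)}$ are linearly independent modulo $\mathscr{H}_n^{\succ(k,\lambda)}$ by Lemma \ref{lem:new-basis}, so the coefficient is uniquely determined, and \eqref{eq:gram-diagonal} follows.

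The only delicate point is bookkeeping: checking that the congruence modulo $J_{k+1}$ and the congruence modulo $(\Hta)^{\succ\lambda}$ both land inside $\mathscr{H}_n^{\succ(k,\lambda)}$. This is handled directly by \eqref{eq:dominance-inclusion}, so no genuine obstacle is expected.
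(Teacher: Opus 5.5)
Your proposal is correct and follows essentially the same route as the paper's proof. Both commute the Murphy elements past $P_k$, reduce the middle factor $X_{A,B}$ via Proposition \ref{prop:XBCq}, apply the Hecke-algebra cell relation, and absorb the error terms using \eqref{eq:dominance-inclusion}; you simply place the inner indices as $(A,\ks),(B,\kt)$ from the outset instead of relabelling at the end.
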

\begin{proof}
Take $(A,\ks),(B,\kt),(C,\ku),(D,\kv)\in \mathcal{M}(k,\lambda)$.  Using \eqref{eq:C-basis},
$$
C^{(k,\lambda)}_{(A,\ks),(B,\kt)}C^{(k,\lambda)}_{(C,\ku),(D,\kv)}
  = \sigma(T_A^{-1})m^{\lambda}_{\ks,\kt} P_k T_B^{-1}\sigma(T_C^{-1})P_km^{\lambda}_{\ku,\kv}T_D^{-1}.
  $$
By Proposition \ref{prop:XBCq} the middle factor satisfies 
$$
P_kT_B^{-1}\sigma(T_C^{-1})P_k\equiv q^{\alpha_B}\delta_{B,C}P_k  \pmod {J_{k+1}}.
$$
Hence modulo $\mathscr{H}_n^{\succ(k,\lambda)}$,
\begin{equation}\label{eq:canonical-form-1}
C^{(k,\lambda)}_{(A,\ks),(B,\kt)}C^{(k,\lambda)}_{(C,\ku),(D,\kv)}
  \equiv q^{\alpha_B}\delta_{B,C}\,\sigma(T_A^{-1})P_km^{\lambda}_{\ks,\kt} m^{\lambda}_{\ku,\kv}T_D^{-1} \pmod{J_{k+1}}.
  \end{equation}
  Meanwhile in the Hecke algebra $\mathcal{H}'_{n-k,\mathbb{K}}(q)$, we have 
  \begin{equation}\label{eq:canonical-form-2}
  m^{\lambda}_{\ks,\kt} m^{\lambda}_{\ku,\kv}
  \equiv \langle c_\kt, c_\ku\rangle_\lambda m^{\lambda}_{\ks,\kv}
  \pmod{(\mathcal{H}'_{n-k,\mathbb{K}}(q))^{\succ\lambda}}.
  \end{equation}
Putting \eqref{eq:canonical-form-1} and \eqref{eq:canonical-form-2} together we obtain as in the proof of Theorem \ref{thm:cellularity}: 
$$
\begin{aligned}
C^{(k,\lambda)}_{(A,\ks),(B,\kt)}C^{(k,\lambda)}_{(C,\ku),(D,\kv)}
&\equiv q^{\alpha_B}\delta_{B,C}\langle c_\kt, c_\ku\rangle_\lambda \sigma(T_A^{-1})P_k m^{\lambda}_{\ks,\kv} T_D^{-1} \pmod{(\mathscr{H}_{n,\mathbb{K}}(q))^{\succ (k,\lambda)}}\\
&\equiv q^{\alpha_B}\delta_{B,C}\langle c_\kt, c_\ku\rangle_\lambda C^{(k,\lambda)}_{(A,\ks),(D,\kv)} \pmod{(\mathscr{H}_{n,\mathbb{K}}(q))^{\succ (k,\lambda)}}. 
\end{aligned}
$$
Then by Definition \ref{defn:canonical form}  we obtain 
\begin{equation}\label{eq:canonical-form-3}
 \langle c_{A,\ks},c_{B,\kt}\rangle^{(n)}_{(k,\lambda)}
  =\delta_{A,B}q^{\alpha_A}\langle c_\ks,c_\kt\rangle_\lambda.
  \end{equation}
This proves the proposition. 
\end{proof}

For each $(k,\lambda)\in\Lambda$, $D^{(n)}_{(k,\lambda)}:=\Delta^{(n)}_{(k,\lambda)}/\text{rad}\langle,\rangle^{(n)}_{(k,\lambda)}$. 
\begin{cor}\label{cor:explicit-irred} 
For each $(k,\lambda)\in\Lambda$,  $\langle,\rangle^{(n)}_{(k,\lambda)}\neq 0$ if and only if $\langle,\rangle_\lambda\neq 0$. 
Thus $D^{(n)}_{(k,\lambda)}\neq 0$ if and only if $\lambda$ is $e$-restricted or equivalently the set $\{D^{(n)}_{(k,\lambda)}\mid 0\leq k\leq n, \lambda\in\mathcal{RP}_e(n-k)\}$ 
is a complete set of pairwise non-isomorphic irreducible $\mHK$-modules. In particular, if $e>n$, then every $\cell^{(n)}_{(k,\lambda)}=D^{(n)}_{(k,\lambda)}$ is irreducible. 

\end{cor}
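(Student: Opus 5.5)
The plan is to read everything off Proposition \ref{prop:gram-form}, and then apply the general theory of cellular algebras, Lemma \ref{lem:radical-cell}, together with the known results for the Murphy basis of Hecke algebras.

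\textbf{Step 1: the first equivalence.} By \eqref{eq:gram-diagonal}, the Gram matrix of $\langle,\rangle^{(n)}_{(k,\lambda)}$ in the basis $\{c_{A,\ks}\}$ is block diagonal. Its blocks are indexed by $A\in\binnk$, and the $A$-block equals $q^{\alpha_A}$ times the Gram matrix $G_\lambda$ of $\langle,\rangle_\lambda$. Since $q$ is invertible, $q^{\alpha_A}\neq 0$. Hence $\langle,\rangle^{(n)}_{(k,\lambda)}\neq 0$ if and only if $G_\lambda\neq 0$, that is, if and only if $\langle,\rangle_\lambda\neq 0$.

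The same block structure identifies the radical:
\[
\mathrm{rad}\langle,\rangle^{(n)}_{(k,\lambda)}=\bigoplus_{A\in\binnk}\{\textstyle\sum_{\ks} a_\ks c_{A,\ks} : \sum_\ks a_\ks c_\ks\in \mathrm{rad}\langle,\rangle_\lambda\}.
\]
Consequently $\dim D^{(n)}_{(k,\lambda)}=\binom{n}{k}\dim D_\lambda$, where $D_\lambda=\cell_\lambda/\mathrm{rad}\langle,\rangle_\lambda$. In particular $D^{(n)}_{(k,\lambda)}\neq 0$ if and only if $D_\lambda\neq 0$.

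\textbf{Step 2: the classification.} For the Murphy basis of $\mathcal{H}'_{n-k,\mathbb{K}}(q)\cong\mathcal{H}_{n-k,\mathbb{K}}(q)$ I will invoke the standard result (Dipper--James, cf. \cite{Ma99}) that $D_\lambda\neq 0$ exactly when $\lambda$ is $e$-restricted, with $e$ the quantum characteristic fixed before Corollary \ref{cor:semisimplicity}. This needs a little care with conventions. Murphy's basis is built from $x_\lambda$, and with the dominance order as set up here the nonzero $D_\lambda$ are indexed by $e$-restricted partitions (rather than $e$-regular ones). I will check this against \cite{Ma99} and adjust by conjugation if the convention differs.

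Combining this with Step 1, Lemma \ref{lem:radical-cell} applied to the cellular datum of Theorem \ref{thm:cellularity} shows that $\{D^{(n)}_{(k,\lambda)}\mid 0\le k\le n,\ \lambda\in\mathcal{RP}_e(n-k)\}$ is a complete set of pairwise non-isomorphic irreducible modules. This is consistent with the parametrization in Corollary \ref{cor:semisimplicity}.

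\textbf{Step 3: the case $e>n$.} If $e>n$, then $e>n-k$ for every $k$, so $\mathcal{H}_{n-k,\mathbb{K}}(q)$ is semisimple. In this case each $\langle,\rangle_\lambda$ is nondegenerate, i.e.\ $\det G_\lambda\neq 0$; one can see this via the semisimplicity criterion for cellular algebras, or directly from Murphy's Gram determinant. Then the block-diagonal Gram matrix for $(k,\lambda)$ has determinant $\prod_A q^{\alpha_A|\Std(\lambda)|}\det G_\lambda\neq 0$. So the radical vanishes and $\cell^{(n)}_{(k,\lambda)}=D^{(n)}_{(k,\lambda)}$ is irreducible.

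The main obstacle is Step 2. It relies on an external fact about Hecke algebras, namely the exact restrictedness convention matching the Murphy basis with the dominance order as defined. The remaining steps are bookkeeping from \eqref{eq:gram-diagonal}.
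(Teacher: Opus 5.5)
Your proposal is correct and matches the argument the paper intends; the paper gives no separate proof and lets the corollary follow directly from Proposition \ref{prop:gram-form}. The block-diagonal Gram matrix reduces everything to the Hecke algebra forms $\langle,\rangle_\lambda$, and Lemma \ref{lem:radical-cell}, together with the standard Murphy-basis facts, then yields both the classification and the $e>n$ case.

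The convention worry in your Step 2 is unfounded. With $x_\lambda$ and the dominance order as set up here (Mathas's conventions in \cite{Ma99}), the nonzero $D_\lambda$ are exactly the $e$-restricted partitions, so no adjustment by conjugation is needed.
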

For $0\leq k\leq n$ and $\lambda\in\mathcal{P}_{n-k}$, let $G^{(n)}_{(k,\lambda)}$ be the Gram matrix associated to the canonical form $\langle,\rangle^{(n)}_{(k,\lambda)}$ with respect to the basis $c_{(A,\ks)}$ with $(A,\ks)\in \mathcal{M}(k,\lambda)$. 
Meanwhile, let $G_{\lambda}$ be the Gram matrix associated to the canonical form $\langle,\rangle_\lambda$. By Proposition \ref{prop:gram-form} we have the following.   

\begin{cor} For each $(k,\lambda)\in\Lambda$, 
\begin{equation}\label{eq:det-factor}
  \det G^{(n)}_{(k,\lambda)}
  =q^{f^\lambda\sum_{A\in\binnk}\alpha_A}(\det G_\lambda)^N, 
\end{equation}
where $f^\lambda$ is the number of standard Young tableaux of shape $\lambda$ and $N=\binom{n}{k}$. 
\end{cor}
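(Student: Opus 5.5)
The plan is to read off the Gram matrix from Proposition \ref{prop:gram-form} as a block matrix and take its determinant. Order the basis $\{c_{A,\ks}\mid (A,\ks)\in\mathcal{M}(k,\lambda)\}$ of $\cell^{(n)}_{(k,\lambda)}$ first by $A\in\binnk$ (in any fixed order, say the lexicographic order \eqref{eq:lex-ord}) and then, within each fixed $A$, by $\ks\in\Std(\lambda)$ in the same order used to form $G_\lambda$. Reordering the basis conjugates the Gram matrix by a permutation matrix $P$, replacing $G$ by $P^{T}GP$, so the determinant is unchanged and we may work with this ordering.

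By \eqref{eq:gram-diagonal}, the $((A,\ks),(B,\kt))$-entry of $G^{(n)}_{(k,\lambda)}$ is $\delta_{A,B}q^{\alpha_A}\langle c_\ks,c_\kt\rangle_\lambda$. Hence in the chosen ordering $G^{(n)}_{(k,\lambda)}$ is block diagonal. It has $N=\binom{n}{k}$ diagonal blocks indexed by $A\in\binnk$, each of size $f^\lambda=|\Std(\lambda)|$, and the block indexed by $A$ equals $q^{\alpha_A}G_\lambda$. In other words
\[
G^{(n)}_{(k,\lambda)}=\operatorname{diag}\bigl(q^{\alpha_A}\bigr)_{A\in\binnk}\otimes G_\lambda .
\]

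The determinant of a block diagonal matrix is the product of the determinants of its blocks. Each block satisfies $\det(q^{\alpha_A}G_\lambda)=q^{f^\lambda\alpha_A}\det G_\lambda$, since scaling an $f^\lambda\times f^\lambda$ matrix by a scalar multiplies its determinant by that scalar to the power $f^\lambda$. Taking the product over the $N$ sets $A$ gives
\[
\det G^{(n)}_{(k,\lambda)}=q^{f^\lambda\sum_{A\in\binnk}\alpha_A}(\det G_\lambda)^N,
\]
which is \eqref{eq:det-factor}.

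There is no real obstacle here; the only point needing care is that $G_\lambda$ is taken with respect to the same ordering of $\Std(\lambda)$ in every block. Even this does not matter, because $\det G_\lambda$ is itself invariant under simultaneous reordering of rows and columns.
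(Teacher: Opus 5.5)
Your proposal is correct and matches the paper, which simply reads the corollary off Proposition~\ref{prop:gram-form}. That proposition makes the Gram matrix block diagonal with $N$ blocks $q^{\alpha_A}G_\lambda$ of size $f^\lambda$, so the determinant factors exactly as you computed, and your remark that reordering the basis does not change the determinant covers the unspecified ordering.
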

 
\begin{rmk}
Though we already know the classification of irreducible modules in Corollary \ref{cor:semisimplicity}, we choose to include Corollary \ref{cor:explicit-irred} as it provides an construction of irreducible modules via an explicit calculation using the cellular basis. 
\end{rmk}

%%%%%%%%%%%%%%%

\subsection{ Identification with  Siegel's irreducible modules}
Throughout this subsection $\mathbb K=\C(q)$. 

\begin{lem}\label{lem:siegel}\cite[Proposition 3.25]{Si93}
For every $0\le r\le n$ and every partition $\lambda\vdash r$, Siegel constructs an irreducible $\mHK$-module $N^{(n)}_{\lambda,r}$, and the modules
$$\{N^{(n)}_{\lambda,r}:0\le r\le n,
     \lambda\vdash r\}$$
form a complete set of pairwise nonisomorphic irreducible modules.
\end{lem}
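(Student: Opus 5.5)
Since this lemma is quoted from \cite[Proposition 3.25]{Si93}, we do not plan to re-derive Siegel's argument. Instead we plan to deduce the statement from Theorem \ref{thm:Morita_equi_MH} and Corollary \ref{cor:semisimplicity}, taking only Siegel's explicit construction of $N^{(n)}_{\lambda,r}$ as input.

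\textbf{Step 1: the count matches.} Over $\mathbb{K}=\C(q)$ the element $q$ is not a root of unity, so $e=\infty>n$. By Corollary \ref{cor:semisimplicity}, $\mHK$ is semisimple and its irreducibles are indexed by the pairs $(k,\lambda)$ with $0\le k\le n$ and $\lambda\vdash n-k$. Setting $r=n-k$, this index set is in bijection with Siegel's labels $\{(\lambda,r)\mid 0\le r\le n,\ \lambda\vdash r\}$.

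\textbf{Step 2: locate each module in a layer.} The ideals $J_k$ are generated by idempotents, and Theorem \ref{thm:Morita_equi_MH} gives
\[
\mHK\cong\bigoplus_k J_k/J_{k+1},\qquad J_k/J_{k+1}\cong\Mat_{\binom nk}(\mathcal{H}'_{n-k,\mathbb{K}}(q)).
\]
So every irreducible module $M$ has a well-defined layer: the unique $k$ with $P_{k+1}M=0$ and $P_kM\neq 0$. For this $k$, the space $P_kM$ is an irreducible module for $P_k(J_k/J_{k+1})P_k$. Proposition \ref{prop:layer-mult-basis} identifies this corner with $\mathcal{H}'_{n-k,\mathbb{K}}(q)$, because $P_k\wtT_{(A,B,w)}P_k\equiv 0$ unless $A=B=[k]$. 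Under the Morita equivalence, $M$ corresponds to the $\mathcal{H}'_{n-k}$-module $P_kM$.

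Applying this to $M=N^{(n)}_{\lambda,r}$, we would use Siegel's explicit formulas for the action of $T_0$, and hence of $P_1$ and, via \eqref{eq:Prec}, of each $P_j$, to check two things:
\begin{itemize}
\item $P_{n-r+1}$ acts by $0$ on $N^{(n)}_{\lambda,r}$;
\item $P_{n-r}N^{(n)}_{\lambda,r}$, as a module over $\mathcal{H}'_{r}=\langle T_{n-r+1},\dots,T_{n-1}\rangle$, is the irreducible Specht module $S^\lambda$.
\end{itemize}
Siegel's module is built from a Specht module of a Hecke algebra of rank $r$ tensored with a space of $\binom nr$ subset-indexed vectors. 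So we expect $P_{n-r}$ to project onto the span of vectors indexed by a single distinguished subset, with the Hecke action there being the Specht action.

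\textbf{Step 3: conclude.} Step 2 shows that $N^{(n)}_{\lambda,r}$ corresponds to the simple module $S^\lambda$ of $\mathcal{H}'_{r}$ in layer $k=n-r$, so it is irreducible. The pairs $(n-r,S^\lambda)$ are pairwise distinct, so the modules are pairwise nonisomorphic. By Step 1 they exhaust all irreducibles, which gives completeness.

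As a cross-check, the dimensions agree: $\sum_{\lambda\vdash r}\bigl(\binom nr f^\lambda\bigr)^2=\binom nr^2 r!$, and summing over $r$ gives $\dim\mHK$ by Lemma \ref{lem:basis-Wan}. Given this identity, it would suffice to prove irreducibility and nonisomorphism, and the dimension count then forces completeness.

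\textbf{Main obstacle.} The hardest step is Step 2: translating Siegel's presentation, which is in terms of $T_0$, into the action of the idempotents $P_j$ and pinning down the image of $P_{n-r}$. We would first compute $P_1=1-q^{-1}(T_0+1)$ on Siegel's basis. We would then show by induction on $j$, using \eqref{eq:Prec}, that $P_j$ kills the basis vectors whose subset label fails to contain $[j]$, up to lower-order terms.
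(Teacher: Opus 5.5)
The paper gives no proof of this lemma; it is quoted from Siegel. Your route, deriving the classification from Theorem~\ref{thm:Morita_equi_MH} and Corollary~\ref{cor:semisimplicity}, is therefore genuinely different, and its formal part is correct:
\begin{itemize}
\item over $\C(q)$ the algebra is semisimple, with simples indexed by pairs $(k,\lambda)$, $\lambda\vdash n-k$;
\item the layer of a simple module is well defined, since $J_k$ is generated by $P_k$ and $P_jP_i=P_j$ for $i<j$;
\item $P_k+J_{k+1}$ corresponds to the matrix unit $E_{[k],[k]}$ (recall $\alpha_{[k]}=0$), so the corner is $\Hta$ acting through $P_kh$;
\item nonisomorphism from distinct (layer, corner) data, and completeness from the count, are fine.
\end{itemize}
There are, however, two gaps.

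First, all of the mathematical content sits in Step~2, and you have not carried it out. You do not write down Siegel's action of $T_0$. The outcomes you state (``we expect $P_{n-r}$ to project onto the span of vectors indexed by a single distinguished subset'', ``up to lower-order terms'') are predictions, not verified facts. Pushing $P_1=1-q^{-1}(T_0+1)$ through the recursion $P_j=-P_{j-1}T_{j-1}^{-1}P_{j-1}$ on an explicit model of dimension $\binom nr f^\lambda$ is exactly the nontrivial computation; everything else is bookkeeping. Your two bullet points are precisely parts~(1) and~(2) of Lemma~\ref{lem:siegel-projection}. The paper obtains these not from Siegel's formulas but from Wan's Schur--Weyl realization $N^{(n)}_{\mu,l}\cong\Hom_{U_q(\mathfrak{gl}_m)}(L(\mu),(V_m\oplus V_0)^{\otimes n})$. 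You could borrow that lemma here only after checking that this identification does not itself presuppose Siegel's classification.

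Second, even granting Step~2, the inference in Step~3, ``so it is irreducible'', does not follow. The correspondence $M\mapsto P_kM$ was set up only for $M$ already known to be simple. Your two checks are equally satisfied by $L\oplus M'$, where:
\begin{itemize}
\item $L$ is the simple module of layer $n-r$ with $P_{n-r}L\cong S^\lambda$;
\item $M'$ is any module all of whose composition factors have layer $<n-r$, so that $P_{n-r}M'=0$.
\end{itemize}
The repair uses the dimension you already invoke, but for irreducibility rather than only for completeness. By semisimplicity, $N^{(n)}_{\lambda,r}$ has a summand $L$ as above. Since $\dim L=\binom{n}{n-r}f^\lambda=\binom nr f^\lambda=\dim N^{(n)}_{\lambda,r}$, this forces $N^{(n)}_{\lambda,r}=L$. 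Alternatively, you could show $N^{(n)}_{\lambda,r}=\mHK P_{n-r}N^{(n)}_{\lambda,r}$.

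With both points fixed, your argument would give more than the lemma. Comparing with Lemma~\ref{lem:cell-projections}, it identifies $N^{(n)}_{\lambda,r}$ with $\cell^{(n)}_{(n-r,\lambda)}$ directly, which is Theorem~\ref{thm:recover-siegel}, without appealing to Siegel's completeness result.
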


%\begin{rmk}
%	In \cite[Proposition 3.8]{Ro14},  Rosso first realized  simple $\mHK$-module $N^{(n)}_{\lambda,r}$ as a $\mathcal H_{n,\mathbb K}(q)$-module, he then identifies this module with the cyclotomic quotient simple module in \cite[Proposition 4.15]{Ro14}.
%\end{rmk}

\begin{lem}\label{lem:cell-projections}
Let $0\le k\le n$ and $\lambda\in\mathcal{P}(n-k)$. Then  the following holds.
\begin{enumerate}
\item If $j>k$, then $P_j\cell^{(n)}_{(k,\lambda)}=0$.  In particular, if $k<n$, then
$P_{k+1}\cell^{(n)}_{(k,\lambda)}=0$. 
\item $P_k\cell^{(n)}_{(k,\lambda)}\neq 0$. Moreover $P_k\cell^{(n)}_{(k,\lambda)}\cong \cell_\lambda$ as a $\Hta$-module,
\item If $j<k$, then $P_j\cell^{(n)}_{(k,\lambda)}\neq 0$. 
\end{enumerate}
\end{lem}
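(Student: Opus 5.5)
The plan is to realize the cell module concretely as a quotient of $J_k$. Fix $\kt\in\Std(\lambda)$ and $B\in\binnk$. By \eqref{eq:mirabolic-basis-triangularity}, the map
\[
c_{A,\ks}\longmapsto C^{(k,\lambda)}_{(A,\ks),(B,\kt)}+\mathscr{H}_n^{\succ(k,\lambda)}
\]
is an injective $\mHK$-module homomorphism from $\cell^{(n)}_{(k,\lambda)}$ into $\mH/\mathscr{H}_n^{\succ(k,\lambda)}$. Equivalently, $x\cdot c_{A,\ks}$ is computed via $h^{(k)}_{C,A}(x)$ from Proposition \ref{prop:colstab}: the coefficient $r_x(C,\ku;A,\ks)$ is the coefficient of $c_\ku$ in $h^{(k)}_{C,A}(x)\cdot c_\ks$ in $\cell_\lambda$. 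So every statement reduces to computing $h^{(k)}_{C,A}(P_j)$.

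For (1), take $j>k$. Then $P_j\in J_{k+1}$, so $P_j\sigma(T_A^{-1})P_k\in J_{k+1}$. Hence $h^{(k)}_{C,A}(P_j)=0$ for all $C,A$ by uniqueness, and $P_j$ acts by zero.

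For (2), I would compute $h^{(k)}_{C,A}(P_k)$ using Proposition \ref{prop:XBCq}. Since $\sigma(T_C^{-1})P_k=\sigma(P_kT_C^{-1})$, we get $P_k\sigma(T_A^{-1})P_k=\sigma(X_{A,[k]})$. By Proposition \ref{prop:XBCq} this is congruent to $\delta_{A,[k]}P_k$ (recall $\alpha_{[k]}=0$). Thus $h^{(k)}_{C,A}(P_k)=\delta_{C,[k]}\delta_{A,[k]}$, so $P_k$ acts as the projection onto $\mathrm{span}\{c_{[k],\ks}\}$. This gives $P_k\cell^{(n)}_{(k,\lambda)}=\mathrm{span}\{c_{[k],\ks}:\ks\in\Std(\lambda)\}\neq0$, since $\Std(\lambda)\ne\varnothing$. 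For $h\in\Hta$, $h$ commutes with $P_k$, so $hP_k\sigma(T_{[k]}^{-1})P_k=P_kh=\sigma(T_{[k]}^{-1})P_kh$ (using $T_{[k]}=1$). Hence $h^{(k)}_{C,[k]}(h)=\delta_{C,[k]}h$, and the map $c_{[k],\ks}\mapsto c_\ks$ is an $\Hta$-isomorphism onto $\cell_\lambda$.

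For (3), take $j<k$. By \eqref{eq:Pnest}, $P_jP_k=P_k$, so $P_j c_{[k],\ks}=P_jP_kc_{[k],\ks}=P_kc_{[k],\ks}=c_{[k],\ks}\ne0$.

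The main point to check is the identity $P_k\sigma(T_A^{-1})P_k=\sigma(X_{A,[k]})$ together with the bookkeeping of $h^{(k)}$ through the embedding. That identity follows from $\sigma(P_k)=P_k$ and the definition \eqref{eq:X-BC}, so I expect no real obstacle.
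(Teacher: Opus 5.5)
Your proposal is correct and follows the paper's proof essentially step by step: (1) uses $P_j\in J_{k+1}$ to force $h^{(k)}_{C,A}(P_j)=0$; (2) computes $h^{(k)}_{C,A}(P_k)=\delta_{C,[k]}\delta_{A,[k]}$ and $h^{(k)}_{C,[k]}(h)=\delta_{C,[k]}h$ for $h\in\Hta$; and (3) uses $P_jP_k=P_k$. The only addition is that you derive $P_k\sigma(T_A^{-1})P_k\equiv\delta_{A,[k]}P_k \pmod{J_{k+1}}$ from Proposition \ref{prop:XBCq}, which the paper leaves implicit when asserting \eqref{eq:Pk-action}; this element is simply $X_{[k],A}$, since $T_{[k]}=1$, so you do not need to pass through $\sigma$.
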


\begin{proof}
    If $j>k$, then $P_j\in J_j\subseteq J_{k+1}$.  Then by \eqref{eq:h-definition} we have $h^{(k)}_{C,A}(P_j)=0$ for any $A,C\in\mathscr{F}^n_k$ and hence by \eqref{eq:x-action-cell} and \eqref{eq:mirabolic-basis-triangularity} we obtain $P_jc_{A,\ks}=0$ for $(A,\ks)\in\mathcal{M}(k,\lambda)$. Hence  $P_j\cell^{(n)}_{(k,\lambda)}=0$. This proves (1). Similarly, one can use \eqref{eq:h-definition}, \eqref{eq:x-action-cell} and \eqref{eq:mirabolic-basis-triangularity} to get 
\begin{equation}\label{eq:Pk-action}
  P_k\, c_{A,\ks}=
  \begin{cases}
    c_{A,\ks},& A=[k],\\
    0,& A\ne [k], 
  \end{cases}
\end{equation}
where we recall $[k]=\{1,2,\ldots,k\}$. Therefore $P_k\cell^{(n)}_{(k,\lambda)}=\mathbb{K}$-span $\{c_{[k],\ks}\mid \ks\in\text{Std}(\lambda)\}$. In addition, for each $x\in \mathcal{H}'_{n-k,\mathbb{K}}(q)$, we have 
$x\, c_{[k],\ks}=\sum_{\kt}\mathtt{r}_{\ks,\kt}c_{[k],\kt} $ if $x\, c_{s}=\sum_{\kt}\mathtt{r}_{\ks,\kt}c_\kt \in \cell_\lambda$ with $\mathtt{r}_{\ks,\kt}\in\mathbb{K}$
again by \eqref{eq:h-definition}, \eqref{eq:x-action-cell} and \eqref{eq:mirabolic-basis-triangularity}. This means $P_k\cell^{(n)}_{(k,\lambda)}\cong \cell_\lambda$ as a $\Hta$-module.  This proves (2). 

Suppose $j<k$.   Since $P_jP_k=P_k$ for $j<k$, we have by \eqref{eq:Pk-action}
$$P_j\, c_{[k],\ks}=P_j\,(P_k\,(c_{[k],\ks}))=P_k\, c_{[k],\ks}=c_{[k],\ks}\ne0.$$
Thus (3) is proved. 
\end{proof}

\begin{lem}\label{lem:siegel-projection}
Let $0\le k,l\le n$, $\mu\in\mathcal{P}(l)$.  For the Siegel module $N^{(n)}_{\mu,l}$, the following statements hold.
\begin{enumerate}
\item If $l>n-k$, then $P_kN^{(n)}_{\mu,l}=0$. In particular, $P_{k+1}N^{(n)}_{\mu,n-k}=0$ when $0\leq k<n$.
\item If $l=n-k$, then
$$P_kN^{(n)}_{\mu,n-k}\cong\cell_\mu$$
as a module for $\Hta$, 
\item If $l<n-k$, then $P_{k+1}N^{(n)}_{\mu,
l}\ne0$.  
\end{enumerate}
Thus, $P_nN^{(n)}_{\mu,
l}\ne0$ if and only if $l=0$.
\end{lem}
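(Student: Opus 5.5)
Since the paper gives no description of Siegel's modules, I will work from Siegel's construction \cite{Si93}, as reformulated by Rosso \cite[Proposition 3.8]{Ro14}. The goal is to realize $N^{(n)}_{\mu,l}$ as an $\mathcal{H}_{n,\mathbb{K}}(q)$-module with an explicit action of $P_1$, and then to read off the action of $P_k$. In that construction, as I recall it, $N^{(n)}_{\mu,l}$ is induced from the parabolic subalgebra $\mathcal{H}_{n-l}\otimes\mathcal{H}'_{l}$ (with $\mathcal{H}'_l=\langle T_{n-l+1},\dots,T_{n-1}\rangle$). The factor $\mathcal{H}'_l$ acts through the irreducible $\cell_\mu$, and the factor $\mathcal{H}_{n-l}$ acts through a one-dimensional ``sign-type'' character on which $T_0$ (equivalently $P_1$) acts by a prescribed scalar. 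Equivalently, $N^{(n)}_{\mu,l}$ has a $\mathbb{K}$-basis $\{v_A\otimes c_\ks \mid A\in\mathscr{F}^n_{n-l},\ \ks\in\Std(\mu)\}$ carrying an explicit action of the generators $T_i$ and $P_1$. I will first pin down this model, checking the normalization of $P_1=1-q^{-1}(T_0+1)$ so that $P_1$ acts on the inducing vector either by $1$ or by $0$. I will then show, using the recursion \eqref{eq:Prec} together with \eqref{eq:Pminus}, that $P_j$ acts on the inducing line $v_{[n-l]}\otimes\cell_\mu$ as the identity for $j\le n-l$.

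Part (1) will follow from the bound $j\le n-l$ combined with \eqref{eq:Prec}: on the inducing line, $P_{n-l+1}=-P_{n-l}T_{n-l}^{-1}P_{n-l}$ acts as $-T_{n-l}^{-1}$ restricted and projected, which vanishes. To conclude that $P_k$ kills all of $N^{(n)}_{\mu,l}$ when $k>n-l$, the cleanest route is a dimension and ideal argument rather than a vector-by-vector computation. The annihilator of an irreducible module is a two-sided ideal, so it suffices to show $P_k$ kills a generating vector $x$ such that $\mH x=N$. Alternatively, I can use the fact that $N^{(n)}_{\mu,l}$ is irreducible and invoke Theorem \ref{thm:Morita_equi_MH} through Corollary \ref{cor:semisimplicity}. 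Over $\mathbb{C}(q)$ the algebra is semisimple, so every irreducible module is a simple factor of exactly one block $J_k/J_{k+1}$, namely the unique $k$ with $J_kN\neq0$ and $J_{k+1}N=0$. Both $J_j$ vanishing and $J_j$ not vanishing are then controlled by this single index $k(N)$. I therefore plan to reduce all three parts to the claim $k(N^{(n)}_{\mu,l})=n-l$.

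To prove this claim, I will compute $P_{n-l}$ and $P_{n-l+1}$ on the inducing vector. This gives $P_{n-l}N\neq0$, hence $J_{n-l}N\neq0$. For the other half, $P_{n-l+1}v=0$ on the generating vector. Since $J_{n-l+1}N$ is a submodule of the irreducible $N$, it equals either $0$ or $N$. If it equals $N$, then $N$ lies in a block of index at least $n-l+1$, and there $P_{n-l}$ acts as a sum of pieces detected by $P_{n-l+1}$; I will need to make this precise. The most direct way is the block decomposition: the unit $e_k$ of $J_k/J_{k+1}$ acts as the identity on exactly one block. Once $k(N)=n-l$ is established, (1) follows because $P_k\in J_{k}\subseteq J_{n-l+1}$ for $k>n-l$, and (3) follows because $P_{k+1}$ with $k+1\le n-l$ satisfies $P_{n-l}=P_{n-l}P_{k+1}$. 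For (2), $P_{n-k}$ is an idempotent commuting with $\Hta$, so $P_kN$ is an $\Hta$-module. Under the Morita equivalence it corresponds to the $\mathcal{H}'_{n-k}$-module attached to $N$, which the induced model identifies with $\cell_\mu$. The final sentence is the case $k=n$.

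I expect the main obstacle to be the first step: extracting from \cite{Si93,Ro14} a concrete model in which $P_1$, rather than $T_0$, acts transparently, and verifying that the inducing vector is fixed by $P_{n-l}$ and killed by $P_{n-l+1}$. Once that computation is done, the rest of the argument is formal.
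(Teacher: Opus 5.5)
Your route (an explicit induced model of $N^{(n)}_{\mu,l}$ together with the block decomposition of Theorem~\ref{thm:Morita_equi_MH}) differs from the paper's, and the difference is legitimate. The paper uses Wan's Schur--Weyl model, where $P_k$ is the projection of the first $k$ tensor factors of $(V_m\oplus V_0)^{\otimes n}$ onto $V_0$. This gives $P_kN^{(n)}_{\mu,l}\cong\Hom_{U_q(\mathfrak{gl}_m)}\bigl(L(\mu),(V_m\oplus V_0)^{\otimes(n-k)}\bigr)$ directly, so (1)--(3) follow from weight considerations and Jimbo's duality, with no block argument needed.

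\textbf{The gap.} Your proposal does not establish the upper bound $J_{n-l+1}N^{(n)}_{\mu,l}=0$, on which (1) and (2) depend.
\begin{enumerate}
\item The reduction ``it suffices to show $P_k$ kills a generating vector'' is false. The annihilator of a vector is only a left ideal, so $P_kx=0$ says nothing about $P_khx$.
\item A concrete instance of exactly your pattern: for $n=3$, the cell module $\cell^{(3)}_{(2,(1))}$ is simple over $\mathbb{C}(q)$ and lies over $J_2/J_3$. It is generated by $v=c_{\{1,3\}}$, and $P_1v=v$ while $P_2v=0$, because $P_2T_2^{-1}P_2=-P_3\in J_3$. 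Nevertheless $P_2c_{\{1,2\}}=c_{\{1,2\}}\neq0$.
\item So knowing ``$P_{n-l}v=v$ and $P_{n-l+1}v=0$'' for one generating vector cannot place $N$ in the block $J_{n-l}/J_{n-l+1}$. Your remark that $e_k$ acts as the identity on exactly one block does not supply the missing step.
\item The claimed vanishing of $P_{n-l+1}v=-P_{n-l}T_{n-l}^{-1}v$ is also unjustified. The vector $T_{n-l}^{-1}v$ leaves the inducing line, so evaluating this needs the action of $P_{n-l}$, hence of $P_1$, on the whole induced module. That is precisely the part of the model you have not pinned down.
\end{enumerate}

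\textbf{A fix within your framework.} You can close the gap using only the lower bound $P_{n-l}N^{(n)}_{\mu,l}\neq0$. Your inducing-line computation gives this once you verify, in the model, that $P_1$ fixes the inducing vector and that $T_i$ acts by $-1$ for $i<n-l$. Write $k(N)$ for the block index of a simple module $N$.
\begin{enumerate}
\item Fix $j$. The Siegel modules with $l\le n-j$ are $\sum_{r=0}^{n-j}|\mathcal{P}(r)|$ pairwise non-isomorphic simple modules (Lemma~\ref{lem:siegel}). Each has $k(N)\ge n-l\ge j$.
\item By Theorem~\ref{thm:Morita_equi_MH} and semisimplicity of $\mathcal{H}_{r,\mathbb{C}(q)}(q)$, the algebra $\bigoplus_{k\ge j}J_k/J_{k+1}$ has exactly $\sum_{k\ge j}|\mathcal{P}(n-k)|$ simple modules. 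These two numbers agree, so the Siegel modules with $l\le n-j$ exhaust them.
\item Suppose $k(N^{(n)}_{\mu,l})=k_0>n-l$. Taking $j=k_0$ forces $N^{(n)}_{\mu,l}\cong N^{(n)}_{\nu,r}$ for some $r\le n-k_0<l$, a contradiction. Hence $k(N^{(n)}_{\mu,l})=n-l$.
\end{enumerate}

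\textbf{Part (2).} You still need $P_{n-l}N$ to equal the inducing line, not merely contain it. Under Lemmas~\ref{lem:matrix-iso} and~\ref{lem:psi-iso}, $P_{n-l}+J_{n-l+1}$ corresponds to $E_{[n-l],[n-l]}(1)$. Hence $\dim P_{n-l}N=\dim N/\binom{n}{l}=f^\mu$, which settles it. This assumes Rosso's Specht-module labelling matches the Murphy cell module $\cell_\mu$, which you should check.

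Finally, in your part (2), $P_{n-k}$ should read $P_k$.
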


\begin{proof}
Let $V_m$ be the natural representation of the quantum group $U_q(\mathfrak{gl}_m)$ and $V_0$ be the trivial module. Denote by $L(\mu)$ the the irreducible quantum group $U_q(\mathfrak{gl}_m)$-module of highest weight $\mu$ for each partition $\mu$ with 
$\ell(\mu)\leq m$. Via Wan's mirabolic Schur--Weyl duality~\cite[Theorem~3.2]{Wan26},  we have $N_{\mu,l}^{(n)}\cong\Hom_{U_q(\mathfrak{gl}_m)}\bigl(L(\mu),(V_m\oplus V_0)^{\otimes n}\bigr)$ and moreover the action of mirabolic Hecke algebra $\mHK$ on the tensor space $(V_m\oplus V_0)^{\otimes n}$ satisfying that  the idempotent $P_k$ projects the first $k$ tensor factors onto the trivial summand. Thus $P_k(V_m\oplus V_0)^{\otimes n}\cong (V_m\oplus V_0)^{\otimes n-k}$ and hence we obtain 
\begin{equation}\label{eq:Pk-Siegel}
P_kN_{\mu,l}^{(n)}\cong \Hom_{U_q(\mathfrak{gl}_m)}\bigl(L(\mu), (V_m\oplus V_0)^{\otimes n-k}\bigr)
\end{equation}
as vector spaces. Observe that If $l>n-k$ then $(V_m\oplus V_0)^{\otimes n-k}$ 
 cannot produce a highest weight of $\mu$ with $\mu\in\mathcal{P}(l)$, so $P_kN_{\mu,l}^{(n)}=0.$ This proves (1). 

 If $l=n-k$, then \eqref{eq:Pk-Siegel} we have 
    \begin{equation}\label{eq:Pk-Siegel-1}
    P_kN_{\mu,n-k}^{(n)}\cong \Hom_{U_q(\mathfrak{gl}_m)}\bigl(L(\mu),V_m^{\otimes n-k}\bigr).
    \end{equation}
    Meanwhile, the subalgebra  $\mathcal{H}'_{n-k,\mathbb{K}}(q)$commutes with $P_k$. Thus  the action of $\mathcal{H}'_{n-k,\mathbb{K}}(q)$ on $P_kN_{\mu,n-k}^{(n)}$ coincides with its action of on the tensor space $V_m^{\otimes n-k}$ defined in \cite[(3.4)]{Wan26}. 
    So the actions of $U_q(\mathfrak{gl})_m$ and  $\mathcal{H}'_{n-k,\mathbb{K}}(q)$ on $V_m^{\otimes n-k}$ satisfies the double centralizer property. Then by a comparison with the actions defined in \cite{Ji86} one can obtain that  $\Hom_{U_q(\mathfrak{gl}_m)}\bigl(L(\mu),V_m^{\otimes n-k}\bigr)\cong \cell_\mu$ by \cite[Section 3.2]{Ma99}. Thus using \eqref{eq:Pk-Siegel-1} we obtain 
    $$
    P_kN_{\mu,r}^{(n)}\cong \Hom_{U_q(\mathfrak{gl}_m)}\bigl(L(\mu),V^{\otimes r}\bigr)\cong \cell_\mu 
    $$
 as $\mathcal{H}'_{n-k,\mathbb{K}}(q)$-modules. This proves (2). 

By \eqref{eq:Pk-Siegel}, we obtain that $P_aN_{\mu,b}^{(n)}\neq 0$ whenever $a+b\leq n$ for each $0\leq a,b\leq n$. So (3) follows. Then the last statement is obvious. 
  \end{proof}

\begin{rmk}
	The above proof of Lemma \ref{lem:siegel-projection} applies properties of Schur–Weyl duality.  In fact, one can use Rosso's explicit construction of simple $\mHK$-modules $N^{(n)}_{\mu,l}$ in \cite[Proposition 3.8 and 4.15]{Ro14} to compute $P_kN^{(n)}_{\mu,l}$ to give an alternative proof of Lemma \ref{lem:siegel-projection}.
\end{rmk}

\begin{thm}\label{thm:recover-siegel}
For every $0\le k\le n$ and  $\lambda\in\mathcal{P}(n-k)$,
\begin{equation}\label{eq:recover-siegel}
  \cell^{(n)}_{(k,\lambda)}
  \cong N^{(n)}_{\lambda,n-k}.
\end{equation}
\end{thm}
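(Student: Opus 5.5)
Since $\mathbb{K}=\C(q)$, $q$ is not a root of unity, so $e=\infty>n$. By Corollary \ref{cor:explicit-irred}, every cell module $\cell^{(n)}_{(k,\lambda)}$ is irreducible and equals $D^{(n)}_{(k,\lambda)}$. By Corollary \ref{cor:semisimplicity}, the modules $\{\cell^{(n)}_{(k,\lambda)}\mid 0\le k\le n,\ \lambda\in\mathcal{P}(n-k)\}$ form a complete set of pairwise non-isomorphic irreducible $\mHK$-modules. By Lemma \ref{lem:siegel}, the modules $\{N^{(n)}_{\mu,l}\}$ form another such set. Hence there is a bijection $\pi$ with $\cell^{(n)}_{(k,\lambda)}\cong N^{(n)}_{\pi(k,\lambda)}$, and it remains to show that $\pi(k,\lambda)=(\lambda,n-k)$. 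The plan is to read off the invariants of $\pi$ from the action of the idempotents $P_j$, using Lemmas \ref{lem:cell-projections} and \ref{lem:siegel-projection}.

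\textbf{Step 1: matching the level.} Suppose $\cell^{(n)}_{(k,\lambda)}\cong N^{(n)}_{\mu,l}$. Define
\[
\kappa(M)=\max\{j\mid P_jM\neq 0\},
\]
which is an isomorphism invariant. By Lemma \ref{lem:cell-projections}(1),(2), $\kappa(\cell^{(n)}_{(k,\lambda)})=k$. For the Siegel side, Lemma \ref{lem:siegel-projection}(1) gives $P_jN^{(n)}_{\mu,l}=0$ when $l>n-j$. Formula \eqref{eq:Pk-Siegel} gives $P_jN^{(n)}_{\mu,l}\neq0$ when $l\le n-j$. Together these give $\kappa(N^{(n)}_{\mu,l})=n-l$, so $k=n-l$.

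\textbf{Step 2: matching the partition.} Now $l=n-k$, and we compare the $\Hta$-modules $P_kM$. This makes sense because $P_k$ commutes with $\Hta$, so $P_kM$ is an $\Hta$-submodule. An isomorphism $\cell^{(n)}_{(k,\lambda)}\cong N^{(n)}_{\mu,n-k}$ restricts to an $\Hta$-isomorphism
\[
P_k\cell^{(n)}_{(k,\lambda)}\cong P_kN^{(n)}_{\mu,n-k}.
\]
By Lemma \ref{lem:cell-projections}(2) the left side is $\cell_\lambda$, and by Lemma \ref{lem:siegel-projection}(2) the right side is $\cell_\mu$. Over $\C(q)$ the Hecke algebra $\Hta\cong\mathcal{H}_{n-k}$ is semisimple, and its cell modules $\cell_\lambda$ are pairwise non-isomorphic irreducibles. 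Hence $\lambda=\mu$, which gives the theorem.

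The main obstacle is the Siegel-side input, namely Lemma \ref{lem:siegel-projection}(2) with the correct labelling. One must check that the Schur--Weyl identification $\Hom_{U_q(\mathfrak{gl}_m)}(L(\mu),V_m^{\otimes n-k})$ is $\cell_\mu$ and not its conjugate $\cell_{\mu'}$. The $P_k$-action conventions in Wan's duality must also agree with Lemma \ref{lem:cell-projections}. If only an identification up to a twist were available, Step 2 would yield $\lambda=\mu$ up to that twist. One would then fix the twist by comparing a single small case, for instance the trivial or sign representation in which $T_i$ acts by $q$ or $-1$.
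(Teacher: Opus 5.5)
Your proposal is correct and follows the paper's proof. Both arguments use irreducibility of $\cell^{(n)}_{(k,\lambda)}$ to match it with some $N^{(n)}_{\mu,l}$. Both then force $l=n-k$ by comparing which idempotents $P_j$ act nonzero; your invariant $\kappa$ simply packages the paper's two-case contradiction using $P_k$ and $P_{k+1}$. Finally, both get $\mu=\lambda$ by comparing the $\Hta$-modules $P_k\cell^{(n)}_{(k,\lambda)}\cong\cell_\lambda$ and $P_kN^{(n)}_{\mu,n-k}\cong\cell_\mu$ via Lemmas \ref{lem:cell-projections} and \ref{lem:siegel-projection}.

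Two small points do not affect the argument. The global bijection $\pi$ is unnecessary: it suffices that each irreducible cell module is isomorphic to some Siegel module. Also, completeness of the set of cell modules comes from Corollary \ref{cor:explicit-irred}, not Corollary \ref{cor:semisimplicity}.
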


\begin{proof}
By Corollary \ref{cor:explicit-irred},  $\cell^{(n)}_{(k,\lambda)}$ is simple. Then by Lemma~\ref{lem:siegel}, $\cell^{(n)}_{(k,\lambda)}\cong N^{(n)}_{\mu,l}$ for a unique pair $(\mu,l)$. 
We claim $l=n-k$. Otherwise, if $l>n-k$, then $P_k \cell^{(n)}_{(k,\lambda)}\neq 0$ while $P_{k}N^{(n)}_{\mu,l}=0$ by Lemma \ref{lem:siegel-projection} and Lemma
\ref {lem:cell-projections}, a contradiction. If $l\leq n-k-1$, then $k\leq n-1$ and $P_{k+1}\cell^{(n)}_{(k,\lambda)}= 0$ while $P_{k+1}N^{(n)}_{\mu,l}\neq 0$ by Lemma \ref{lem:siegel-projection} and Lemma 
\ref {lem:cell-projections}, a contradiction. Thus $l=n-k$.  This means $\cell^{(n)}_{(k,\lambda)}\cong N^{(n)}_{\mu,n-k}$. Hence $P_k\cell^{(n)}_{(k,\lambda)}\cong P_kN^{(n)}_{\mu,n-k}$, which implies  $\cell_\lambda\cong\cell_\mu$ as $\mathcal{H}'_{n-k,\mathbb{K}}(q)$-modules
by Lemma \ref{lem:siegel-projection} and Lemma  
\ref {lem:cell-projections}, Hence $\mu=\lambda$. This proves the theorem. 

\end{proof}

%%%%%%%%%
%%%%%%%%%

\end{document}